\newtheorem{theorem}{Theorem}[section]
\newtheorem{lemma}[theorem]{Lemma}
\newtheorem{corollary}[theorem]{Corollary}
\newtheorem{proposition}[theorem]{Proposition}
\newtheorem{problem}[theorem]{Problem}
\theoremstyle{definition}
\newtheorem{question}[theorem]{Question}
\newtheorem{fact}[theorem]{Fact}
\newtheorem{example}[theorem]{Example}
\newtheorem{definition}[theorem]{Definition}
\newtheorem{remark}[theorem]{Remark}
\newcommand{\Z}{{\mathbb Z}}
\newcommand{\N}{{\mathbb N}}
\newcommand{\R}{{\mathbb R}}
\newcommand{\cont}{{\mathfrak c}}
\def\kal#1{\mathcal{K}_{#1}}  
\def\skal#1{\overline{\mathcal{K}}_{#1}} 
\def\lkal#1{l\kal{#1}} 
\def\slkal#1{\overline{l\mathcal{K}}_{#1}} \def\t#1#2{\mathscr{T}_{{#1},{#2}}}   
\def\td#1{\mathscr{T}_{#1}}
\def\hull#1{\left\langle{#1}\right\rangle}
\def\nbh#1#2#3{\mathscr{O}(#1,#2,#3)}
\title[Direct sums and products in topological groups and vector spaces]{Direct sums and products in topological groups and vector spaces}
\author[D. Dikranjan]{Dikran Dikranjan}
\address[Dikran Dikranjan]{Dipartimento di Matematica e Informatica, Universit\`{a} di Udine, Via delle Scienze  206, 33100 Udine, Italy}
\email{dikran.dikranjan@uniud.it}
\author[D. Shakhmatov]{Dmitri Shakhmatov}
\address[Dmitri Shakhmatov]{Division of Mathematics, Physics and Earth Sciences\\
Graduate School of Science and Engineering\\
Ehime University, Matsuyama 790-8577, Japan}
\email{dmitri.shakhmatov@ehime-u.ac.jp}
\author[J. Sp\v{e}v\'ak]{Jan Sp\v{e}v\'ak}
\address[Jan Sp\v{e}v\'ak]{Department of Mathematics\\ Faculty of Science\\ J. E. Purkyne
University, \v{C}esk\'{e} ml\'{a}de\v{z}e 8,
400 96 \'{U}st\'{i} nad Labem\\
Czech Republic} \email{jan.spevak@ujep.cz}
\keywords{Topological group, topological vector space, infinite direct sum, infinite direct product, topologically independent set, absolutely Cauchy summable set, absolutely summable set}
\thanks{The first named author was supported by the grant ``Progetti di Eccellenza 2011/12" of Fondazione CARIPARO.
The second named author was partially supported by the Grant-in-Aid for Scientific Research~(C) No.~26400091 by the Japan Society for the Promotion of Science (JSPS).
The third named author was supported by the grant P201-12-P724 of GA\v{C}R}
\begin{document}

\begin{abstract}

We call a subset $A$ of an abelian topological group $G$: (i) {\em absolutely Cauchy summable\/} provided that for every open neighbourhood $U$ of  $0$  one can find a finite set $F\subseteq A$ such that the subgroup generated by $A\setminus F$ is contained in $U$; (ii) {\em absolutely summable\/} if, for every family $\{z_a:a\in A\}$ of integer numbers, there exists $g\in G$ such that the net $\left\{\sum_{a\in F} z_a a: F\subseteq A\mbox{ is finite}\right\}$ converges to $g$; (iii) {\em topologically independent\/} provided that $0\not \in A$ and for every neighbourhood $W$ of  $0$  there exists a neighbourhood $V$ of  $0$  such that, for every finite set $F\subseteq A$ and each set $\{z_a:a\in F\}$ of integers, $\sum_{a\in F}z_aa\in V$ implies that $z_aa\in W$ for all $a\in F$. We prove that: (1) an abelian topological group contains a direct product (direct sum) of $\kappa$-many non-trivial topological groups if and only if it contains a topologically independent, absolutely (Cauchy) summable subset of cardinality $\kappa$; (2) a topological vector space contains $\R^{(\N)}$ as its subspace if and only if it has an infinite absolutely Cauchy summable set; (3) a topological vector space contains $\R^{\N}$ as its subspace if and only if it has an $\R^\N$ multiplier convergent series of non-zero elements. We answer a question of Hu\v{s}ek and  generalize results by Bessaga-Pelczynski-Rolewicz, Dominguez-Tarieladze and Lipecki.
\end{abstract}

\subjclass[2010]{Primary 22A05; Secondary 20K25, 46A11, 46A16, 46A30, 46A35}

\dedicatory{In memory of Nigel J. Kalton}

\maketitle

\def\rhull#1{\hull{#1}_\R}

The symbols $\N$, $\Z$ and $\R$ stay for the sets of natural numbers, integers and real numbers respectively equipped with their usual 
algebraic  and topological structures. 

Let $G$ be an abelian  group. By $0_G$ we denote the identity element of $G$, and the subscript is often omitted when there is no danger of confusion. 
Given a subset $A$ of $G$, the symbol $\hull{A}$ stays for the subgroup of $G$ generated by $A$.
For every element $a\in G$, we use $\hull{a}$  to denote the cyclic subgroup 
of $G$ generated by $a$; that is, we use $\hull{a}$ instead of $\hull{\{a\}}$ for brevity.  When $G$ is a topological group,
we always consider $\hull{a}$  with the subspace topology inherited from $G$.
If  $\{A_i:i\in I\}$ is a finite family of subsets of $G$, then we let
$$
\sum_{i\in I} A_i=\begin{cases} \left\{\sum_{i\in I}a_i: (\forall i\in I) a_i \in A_i\right\}\mbox{ in case }I \ne \emptyset\\
0 \mbox{ in case }I =\emptyset.
\end{cases}
$$

Let $I$ be a non-empty set and let $\{G_i:i\in I\}$ be a family of groups. 
As usual, $\prod_{i\in I} G_i$ denotes the {\em direct product\/} of this family consisting of all elements $\{g_i\}_{i\in I}$ such that $g_i\in G_i$ for every $i\in I$. It becomes a group under coordinate-wise multiplication.
The {\em direct sum\/} $\bigoplus_{i\in I} G_i$ of the family $\{G_i:i\in I\}$ is a subgroup of the direct product $\prod_{i\in I} G_i$ consisting of those elements $\{g_i\}_{i\in I}\in \prod_{i\in I} G_i$
for which the set $\{i\in I: g_i\not = 0_{G_i}\}$ is finite. If $X_i\subseteq G_i$ for every $i \in I$, we denote by  
$\bigoplus_{i\in I} X_i$  the set  $\{\{g_i\}_{i\in I}\in \bigoplus_{i\in I} G_i: g_i\in X_i \mbox{ for all } i\in I\}$. If $G_i=G$ for all $i\in I$, then following  the
standard practice, the direct product $\prod_{i\in I} G_i$ is denoted by $G^I$ and the correspondent direct sum is denoted by $G^{(I)}$.
 
An {\em NSS} group is a topological group which has a neighbourhood of the identity that contains no non-trivial subgroup.

Let $A$ be a subset of  a vector space $E$. The subspace of $E$ generated by $A$ is denoted by $\rhull{A}$. If $a\in E$, then we use $\rhull{a}$ instead of $\rhull{\{a\}}$ for brevity. Clearly, $\rhull{a}=\{r a: r\in\R\}$ is the ``line'' passing through $a$ and $0_E$.

{\bf All topological groups are assumed to be Hausdorff.\/}

\section{Introduction}

The aim of this paper is to study the following fundamental question: 
\begin{question}
\label{our:aim}
Let $G$ be a topological group.
\begin{itemize}
\item[(i)] When does $G$ contain an (infinite) direct sum of non-trivial topological groups? 
\item[(ii)] When does $G$ contain an (infinite) direct product of non-trivial topological groups? 
\end{itemize}
\end{question}
Clearly, a topological group contains (a subgroup topologically isomorphic to) a direct product (a direct sum) of $\kappa$-many non-trivial topological groups if and only if it contains a direct product (a direct sum) of $\kappa$-many non-trivial cyclic topological groups.
Therefore, we can reduce Question \ref{our:aim} to the question of when does a topological group contain a direct product (a direct sum) of non-trivial {\em cyclic\/} topological groups.
 Since both a direct sum and a direct product of cyclic groups are necessarily abelian, by passing to a subgroup of the group $G$ in Question \ref{our:aim} if necessary, we may assume, without loss of generality, that $G$ itself is abelian. This is why for the rest of this paper
{\bf we shall assume that all (topological) groups are abelian\/}.

In order to tackle Question \ref{our:aim}, let us to introduce a relevant notation. Let $G$ be a group and let $A\subseteq G\setminus \{0\}$. Then there exists a unique group homomorphism 
\begin{equation}
\label{eq:K_A}
\kal{A}: \bigoplus _{a\in A} \hull{a}\to G
\end{equation}
which extends each natural inclusion map $\hull{a}\to G$ for $a\in A$. The set $A$ is said to be {\em independent\/} if $\kal{A}$ is monomorphism.

We call the map $\kal{A}$ as in \eqref{eq:K_A} the {\em Kalton map associated with $A$\/}, in memory of Nigel Kalton whose idea presented in \cite{kal}  inspired us to write this manuscript.

When $G$ is a topological group, the cyclic subgroup $\hull{a}$ of $G$ generated by an element $a\in A$ inherits the subspace topology from $G$. Therefore, one can consider the Tychonoff product topology on the direct product 
$$
P_A=\prod_{a\in A}\hull{a},
$$
where each $\hull{a}$ carries the subgroup topology inherited from $G$. Since the direct sum 
\begin{equation}
\label{eq:S_A}
S_A=\bigoplus _{a\in A} \hull{a}
\end{equation}
is naturally identified with a subgroup of $P_A$, it carries the subgroup topology. Now both the domain \eqref{eq:S_A} and the range $G$ of the map \eqref{eq:K_A} are equipped with group topologies, thereby allowing one to talk about {\em topological} properties of the homomorphism $\kal{A}$ such as its continuity and openness. If $\kal{A}$ is a monomorphism which is both continuous and an open map onto its image $\kal{A}(S_A)=\hull{A}$,
then $\kal{A}$ is a topological isomorphism between $S_A$ and the subgroup $\hull{A}$
of $G$, thereby yielding (a subgroup topologically isomorphic to) a direct sum of $|A|$-many non-trivial cyclic groups in $G$. The converse also obviously holds. 
\begin{fact}
\label{fact}
For a topological group $G$, the following conditions are equivalent:
\begin{itemize}
\item[(i)] $G$ contains a direct sum of $\kappa$-many non-trivial topological groups;
\item[(ii)] there exists a set $A\subseteq G\setminus\{0\}$ such that
$|A|=\kappa$ and $\kal{A}$ is a continuous monomorphism which is an open map onto its image $\kal{A}(S_A)$, or equivalently, $\kal{A}$ is a topological isomorphism between $S_A$ and the subgroup $\kal{A}(S_A)=\hull{A}$
of $G$.
\end{itemize}
\end{fact}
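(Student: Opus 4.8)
The plan is to establish the two implications directly from the definition of the Kalton map, leaning on the reduction already recorded in the text: a topological group contains a direct sum of $\kappa$-many non-trivial topological groups if and only if it contains a direct sum of $\kappa$-many non-trivial \emph{cyclic} topological groups. Note also that the ``or equivalently'' clause in (ii) needs no separate argument, since it is merely the standard fact that a continuous open monomorphism is precisely a topological isomorphism onto its image.

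The implication (ii) $\Rightarrow$ (i) is the one already sketched in the paragraph preceding the statement, so I would only make it explicit. If $\kal{A}$ is a topological isomorphism between $S_A=\bigoplus_{a\in A}\hull{a}$ and $\hull{A}$, then, since each $a\in A$ is non-zero, every $\hull{a}$ is a non-trivial cyclic group; hence $S_A$ is literally a direct sum of $\kappa=|A|$ many non-trivial cyclic topological groups, and $\kal{A}$ realizes it as the subgroup $\hull{A}$ of $G$. This is exactly (i).

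For the converse (i) $\Rightarrow$ (ii), I would start from a topological isomorphism $\phi$ from $\bigoplus_{i\in I}C_i$ onto a subgroup $H$ of $G$, where, by the reduction, $|I|=\kappa$ and each $C_i=\hull{c_i}$ is a non-trivial cyclic group. Setting $a_i=\phi(c_i)$ and $A=\{a_i:i\in I\}$, injectivity of $\phi$ together with the fact that the generators $c_i$ are pairwise distinct and non-zero at distinct coordinates forces the $a_i$ to be pairwise distinct and non-zero, so that $A\subseteq G\setminus\{0\}$ and $|A|=\kappa$. Restricting $\phi$ to the $i$-th summand identifies $C_i$ topologically with the cyclic subgroup $\hull{a_i}$ of $G$, and these coordinate-wise topological isomorphisms assemble into a topological isomorphism $\Phi\colon\bigoplus_{i\in I}C_i\to\bigoplus_{a\in A}\hull{a}=S_A$. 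Evaluating both sides on the generators $c_i$ then yields the key identity $\kal{A}\circ\Phi=\phi$, so that $\kal{A}=\phi\circ\Phi^{-1}$ is a composition of topological isomorphisms onto images and is therefore itself a topological isomorphism of $S_A$ onto $\kal{A}(S_A)=\hull{A}$, which is (ii).

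The routine but genuine content — and hence the only place demanding any care — is the compatibility of the topologies hidden in the previous paragraph: one must verify that a single summand of a direct sum carries its original topology as a subspace, and that a family of coordinate-wise topological isomorphisms induces a topological isomorphism of the direct sums, each taken with the subspace topology inherited from the corresponding Tychonoff product. Once this bookkeeping is settled, the identity $\kal{A}\circ\Phi=\phi$ makes the equivalence immediate.
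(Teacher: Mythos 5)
Your proposal is correct and takes essentially the same route as the paper's own (implicit) justification: the paragraph preceding the Fact establishes (ii)$\Rightarrow$(i) and declares the converse obvious via the reduction to direct sums of non-trivial \emph{cyclic} topological groups. Your identity $\kal{A}\circ\Phi=\phi$, together with the bookkeeping that summands embed with their original topologies and that coordinate-wise topological isomorphisms induce one on the direct sums, simply makes that ``obvious'' converse explicit.
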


This fact shows that Question \ref{our:aim}~(i) transforms to the following problem:

\begin{problem}
\label{when:Kalton:is:an:top:isomorhism}
Given a subset $A$ of a topological group $G$ such that $0\not\in G$, find the necessary and sufficient conditions on $A$ to make the Kalton map $\kal{A}:S_A\to G$ a topologically isomorphic embedding.
\end{problem}

We completely resolve this problem
in two steps. 
First, in Section \ref{sec:abs:cauch:summable:sets} we introduce the notion of an {\em absolutely Cauchy summable set} in a topological group which generalizes the concept of an $f_\omega$-Cauchy summable sequence introduced and studied extensively by the authors in~\cite{DSS_arxiv}. 
The importance of this notion becomes clear from
Theorem \ref{Proposition:Udine}
which states that 
the Kalton map $\kal{A}$ is continuous if and only if $A$ is absolutely Cauchy summable set. 
Second,in 
Section \ref{sec:top:indep:sets} we introduce the notion of a {\em topologically independent set} in a topological group which is a counterpart of the notion of an independent set in a group. Indeed, topologically independent sets are independent and these two notions coincide in discrete groups by Lemma \ref{lemma:top:ind:is:ind}.
While independent set already produces a direct sum in a group, the topological independence of a set $A$ is only a necessary condition for the Kalton map $\kal{A}$ to be a topologically isomorphic embedding; this fact
is established in Proposition \ref{Proposition:Matsuyama}~(ii). 
It turns out that the Kalton map $\kal{A}$ is a topologically isomorphic embedding if and only if 
$A$ is {\em both\/} topologically independent and absolutely Cauchy summable;
see Theorem \ref{Kalton:top:iso}. Indeed, topological independence of $A$ assures that $\kal{A}$ is an open isomorphism (taken as a map onto $\hull{A}$) while the absolute Cauchy summability 
is responsible for 
the continuity of $\kal{A}$. 
This resolves Problem \ref{when:Kalton:is:an:top:isomorhism} and thus,
Question \ref{our:aim}~(i) as well.

It is worth noting that neither of the two notions taken alone guarantees that 
the Kalton map $\kal{A}$ is a topological embedding. (In view of Theorem \ref{Kalton:top:iso} described above, this means that these two notions are logically independent of each other.)
Indeed, 
an example of a compact group in which every null sequence is absolutely Cauchy summable but the group does not contain even a product of two non-trivial groups is given in Remark \ref{remark:Zp}.
Similarly, 
every Schauder basis in a Banach space is topologically independent
by Proposition \ref{schauder:is:top:ind}, yet 
Banach spaces do not contain infinite direct sums; see Remark \ref{rem:on:Schauder}.

The step towards the question when a topological group contains a direct product of cyclic topological groups is now
straightforward. Given an absolutely Cauchy summable set $A$ in a topological group $G$, the Kalton map $\kal{A}$ is continuous. Thus, it continuously extends to a 
unique map $\skal{A}$ from the completion of $S_A$ to the completion of $G$. Since $P_A$ is a subset of the completion of $S_A$, if 
the image $\skal{A}(P_A)$ of $P_A$ under $\skal{A}$ is in $G$,
then $G$ contains a direct product $P_A$ of $|A|$-many non-trivial groups.
The converse is also true: If $G$ contains a direct product of $|A|$-many non-trivial groups, then it contains a subset $A$ such that $\skal{A}(P_A)\subseteq G$. 
Therefore, Question \ref{our:aim}~(ii)
transforms to the following problem:

\begin{problem}
\label{when:extended:Kalton:is:an:top:isomorhism}
Given a subset $A$ of a topological group $G$ such that $0\not\in G$
and $\kal{A}:S_A\to G$ is an isomorphic embedding,
find the necessary and sufficient conditions on $A$ equivalent to 
the inclusion $\skal{A}(P_A)\subseteq G$. 
\end{problem}

It  turns out that this happens precisely when the set $A$ is {\em absolutely summable\/} in $G$. This notion is introduced in Section \ref{sec:abs:summable:sets}, where we also show that it is a generalization of the concept of $f_\omega$-summable sequences studied in \cite{DSS_arxiv}. We prove that $\skal{A}\restriction{P_A}$ is a topologically isomorphic embedding into $G$ if and only if $A$ is absolutely summable and topologically independent; see Theorem \ref{characterization:of:topological:isomorphism:of:extended:Kalton:maps}.
Absolutely summable sets are absolutely Cauchy summable, and the converse 
holds in complete groups; see Proposition \ref{as:is:acs}.

In Section \ref{Sec:inf:dir:sums:in:top:groups} we provide our first applications of the above mentioned results. Namely, we  generalize the result of  Dominguez and Tarieladze from \cite{DT-private} by proving that every metric torsion free group such that each its cyclic subgroup is discrete is not NSS if and only if it contains as a subgroup the topological group $\Z^{(\N)}$; see Corollary \ref{sum:of:Z:inside}. As another example of various possible applications we also show that a complete metric torsion group is not NSS if and only if it contains a (compact) subgroup that is topologically isomorphic  to an infinite product of non-trivial finite cyclic groups; in particular, if the above group is not NSS, then it contains an infinite compact zero-dimensional subgroup (Theorem \ref{compact:zero:dim:inside}). This property was used in \cite{DS_Lie} to obtain a characterization of the Lie groups. 

If $A$ is a subset of a topological vector space, then the Kalton map $\kal{A}$ can be naturally extended to the {\em linear Kalton map $\lkal{A}$\/}; this is done in Section \ref{Sec:lkal}. We show that if $A$ is infinite and $\kal{A}$ is continuous, then $A$ contains an infinite subset $B$ such that the linear Kalton map $\lkal{B}$ is a topologically isomorphic embedding; see Corollary \ref{continuous:contain:isomorphic:subsets}.
We use this result in  the final Section \ref{Sec:final}, where we provide a characterization of topological vector spaces that contain the topological vector space  $\R^{(\N)}$ (Theorem \ref{complete:TVS:not:TAP:iff:contains:R:to:N})  and the topological vector space $\R^\N$ (Theorem \ref{another:theorem}).  As corollaries  we obtain the result of Lipecki about metric vector spaces containing $\R^{(\N)}$ (\cite[Theorem 3]{Lip}) and results of Bessaga, Pelczynski and Rolewicz about complete metric vector spaces containing $\R^\N$ (\cite[Theorem 9 and Corollary]{BPR}).  Theorem \ref{another:theorem} also answers a question of Hu\v{s}ek posed in \cite{Husek2}; see Remark \ref{answer:to:a:question:of:M:Husek}.

\section{Modified topology of a topological group capturing topological properties of the Kalton map}

\begin{definition}
\label{def:modified:topology}
Let $G$ be a group, and let $A$ be its subset.
\begin{itemize}
\item[(i)] For every subset $W$ of $G$ containing $0$ and each finite set $F\subseteq A$, let 
\begin{equation}
\label{eq:WAF}
\nbh{W}{A}{F}=\sum_{a\in F} (\hull{a} \cap W)+ \hull{A\setminus F}.
\end{equation}
\item[(ii)] For a group topology $\tau$ on $G$, we use $\t{A}{\tau}$  to denote  the group topology on $G$ having as its base of neighbourhoods of $0$ the family\begin{equation}
\{\nbh{W}{A}{F}: 0\in W\in \tau, F\subseteq A\mbox{ is finite}\},
\end{equation}
and we shall call $\t{A}{\tau}$ the {\em $A$-modification of $\tau$\/}.
\item[(iii)] For brevity, the $A$-modification of the discrete topology  $\delta_G$ on $G$ will be denoted by $\td{A}$ (instead of the longer notation $\t{A}{\delta_G}$).
\end{itemize}
\end{definition}

\begin{remark}
\label{linear:remark}
It follows easily from \eqref{eq:WAF} that $\nbh{\{0\}}{A}{F}=\hull{A\setminus F}$. Combing this with items (ii) and (iii) of Definition \ref{def:modified:topology}, we conclude that the family
$$
\{\hull{A\setminus F}: F\subseteq A, F \mbox{ is finite}\}
$$
forms a base of neighbourhoods of $0$ for the topology $\td{A}$. In particular, $\td{A}$ is a {\em linear\/} topology on $G$; that is, $\td{A}$ has a base at $0$ consisting of subgroups of $G$.
 \end{remark}

The topology $\t{A}{\tau}$ need not be Hausdorff in general, even when $\tau$ itself is Hausdorff.

For two topologies $\tau$, $\tau'$ on a set $X$, $\tau\le\tau'$ means that every $\tau$-open subset of $X$ is also $\tau'$-open.

\begin{remark}\label{observation:about:t:A}
Let $G$ be a group and $\tau$ be a group topology on $G$.
\begin{itemize}
 \item[(a)] If $\tau'$ is a group topology on $G$ with $\tau\le\tau'$, then $ \t{A}{\tau}\le \t{A}{\tau'}$; in particular, $\t{A}{\tau} \leq \td{A}$.
 \item[(b)] $\langle A \rangle$ is an open subgroup of $(G,\t{A}{\tau})$ and thus of $(G,\td{A})$ as well.
 \item[(c)] $\t{A}{\td{A}}=\td{A}$.
\end{itemize}
\end{remark}

\begin{proposition}
\label{Matsuyama:split}
For a subset $A$ of a topological group $(G,\tau)$, the following conditions are equivalent:
\begin{itemize}
 \item[(i)] $\tau \leq \t{A}{\tau}$; 
 \item[(ii)] $\tau \leq \td{A}$;
 \item[(iii)] $\tau\restriction_{\langle A\rangle}\le\t{A}{\tau}\restriction_{\langle A\rangle}$.
\end{itemize}
\end{proposition}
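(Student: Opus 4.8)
My plan is to use condition~(i) as a hub, proving the two equivalences (i)$\Leftrightarrow$(ii) and (i)$\Leftrightarrow$(iii) separately; in each the forward implication is a formality and the reverse one carries the content. For (i)$\Rightarrow$(ii), Remark~\ref{observation:about:t:A}(a) gives $\t{A}{\tau}\le\td{A}$, so (i) yields $\tau\le\t{A}{\tau}\le\td{A}$. For (i)$\Rightarrow$(iii), passing to the subspace topology on $\hull{A}$ preserves the inequality $\le$, so $\tau\restriction_{\langle A\rangle}\le\t{A}{\tau}\restriction_{\langle A\rangle}$ is immediate.

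The substantial step is (ii)$\Rightarrow$(i). Fixing a $\tau$-neighbourhood $U$ of $0$, I must produce a basic $\t{A}{\tau}$-neighbourhood $\nbh{W}{A}{F}=\sum_{a\in F}(\hull{a}\cap W)+\hull{A\setminus F}$ inside $U$. The idea is to split $U$ through the group operations and to absorb the tail $\hull{A\setminus F}$ using the hypothesis. First I would choose a $\tau$-neighbourhood $V$ of $0$ with $V+V\subseteq U$. Since $\tau\le\td{A}$ and, by Remark~\ref{linear:remark}, the subgroups $\hull{A\setminus F}$ form a base at $0$ for $\td{A}$, there is a finite set $F=\{a_1,\dots,a_n\}\subseteq A$ with $\hull{A\setminus F}\subseteq V$. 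Then I would pick a $\tau$-neighbourhood $W$ of $0$ whose $n$-fold sum satisfies $W+\dots+W\subseteq V$; since $\hull{a_i}\cap W\subseteq W$, this forces $\sum_{i=1}^{n}(\hull{a_i}\cap W)\subseteq V$, whence $\nbh{W}{A}{F}\subseteq V+V\subseteq U$, as needed.

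For (iii)$\Rightarrow$(i) I would exploit two structural facts: $\hull{A}$ is a $\t{A}{\tau}$-open subgroup (Remark~\ref{observation:about:t:A}(b)), and every basic neighbourhood $\nbh{W}{A}{F}$ lies inside $\hull{A}$, which is immediate from~\eqref{eq:WAF}. Given a $\tau$-neighbourhood $U$ of $0$, the trace $U\cap\hull{A}$ is a $\tau\restriction_{\langle A\rangle}$-neighbourhood of $0$, hence by (iii) a $\t{A}{\tau}\restriction_{\langle A\rangle}$-neighbourhood of $0$. Because the basic neighbourhoods of the subspace topology on $\hull{A}$ are precisely the sets $\nbh{W}{A}{F}\cap\hull{A}=\nbh{W}{A}{F}$, some such set satisfies $\nbh{W}{A}{F}\subseteq U\cap\hull{A}\subseteq U$. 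Thus $U$ contains a $\t{A}{\tau}$-neighbourhood of $0$, giving (i).

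I expect the genuine obstacle to be the construction in (ii)$\Rightarrow$(i). The delicate point is that the tail subgroup $\hull{A\setminus F}$ need not be $\tau$-small by itself, so one cannot simply bound every summand of $\nbh{W}{A}{F}$ inside a single small neighbourhood; hypothesis~(ii) is exactly the leverage that lets one choose $F$ so that $\hull{A\setminus F}$ is swallowed by a prescribed $\tau$-neighbourhood, after which the finitely many head summands $\hull{a_i}\cap W$ are tamed by ordinary continuity of addition. The remaining implications, including (iii)$\Rightarrow$(i), reduce to the single observation that the basic $\t{A}{\tau}$-neighbourhoods of $0$ are contained in the $\t{A}{\tau}$-open subgroup $\hull{A}$.
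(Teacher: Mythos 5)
Your proof is correct and follows essentially the same route as the paper's: (i)$\Rightarrow$(ii) via Remark~\ref{observation:about:t:A}(a), the $V+V\subseteq U$ splitting with tail absorption and an $|F|$-fold sum for (ii)$\Rightarrow$(i), and the openness of $\hull{A}$ (equivalently, $\nbh{W}{A}{F}\subseteq\hull{A}$) for (iii)$\Rightarrow$(i). The only cosmetic difference is that the paper explicitly sets $W=V$ when $F=\emptyset$ in (ii)$\Rightarrow$(i), a degenerate case your argument glosses over but which is harmless since then $\hull{A}\subseteq V\subseteq U$ directly.
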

\begin{proof}
(i)$\to$(ii) follows from Remark
\ref{observation:about:t:A}~(a).

(ii)$\to$(i) Pick an arbitrary $U\in\tau$ with  $0\in U$. Since $\tau$ is a group topology, we can fix a $\tau$-neighbourhood $V$ of $0$ such that ${V}+ {V}\subseteq U$. 
Since $\tau\le \td{A}$ by (ii), $V\in \td{A}$. By Remark \ref{linear:remark}, there exists a finite set $F\subseteq A$ such that 
$\hull{A\setminus F}\subseteq {V}$. If $F=\emptyset$, we let $W=V$. Otherwise, we pick a $\tau$-neighbourhood $W$ of zero of $G$ such that  
$W+W+\ldots+W\subseteq V$, where $|F|$-many $W$'s are taken in the sum. Then $\sum_{a\in F} (\hull{a} \cap W)\subseteq V$, so that
$$
\nbh{W}{A}{F}= \sum_{a\in F} (\hull{a} \cap W)+ \hull{A\setminus F}  \subseteq  V+{V}\subseteq U.
$$
Since $\nbh{W}{A}{F}\in \t{A}{\tau}$, this implies $U\in \t{A}{\tau}$.

(i)$\to$(iii) is trivial. 

(iii)$\to$(i) Let $U$ be a $\tau$-open subset of $G$ containing $0$. Then $V=U\cap \hull{A}\in \tau\restriction_{\langle A\rangle}$, which implies $V\in \t{A}{\tau}\restriction_{\langle A\rangle}$ by (iii). Since $\hull{A}\in \t{A}{\tau}$ by Remark \ref{observation:about:t:A}~(b), this implies that $V\in \t{A}{\tau}$. Since $0\in V\subseteq U$, we conclude that $U\in \t{A}{\tau}$.
\end{proof}

The importance of the $A$-modification of a group topology to the topic of this paper becomes clear from the next proposition and its corollary below.

\begin{proposition}
\label{when:is:kalton:continuous}
Let $A$ be a subset of a topological group $(G,\tau)$ such that $0\not\in A$. Then:
\begin{itemize}
 \item[(i)]  the Kalton map $\kal{A}$ is continuous if and only if $\tau\le\t{A}{\tau}$, and
 \item[(ii)] the Kalton map $\kal{A}$ is an open map onto $\hull{A}$ if and only if $\t{A}{\tau}\restriction_{\hull{A}}\le\tau\restriction_{\hull{A}}$. \end{itemize}
\end{proposition}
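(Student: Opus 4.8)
The plan is to unwind the definitions on both sides and translate the topological statements about $\kal{A}$ into inequalities between the topologies $\tau$ and $\t{A}{\tau}$ via the Kalton map. The key observation is that $\kal{A}$ is a group homomorphism from $S_A = \bigoplus_{a\in A}\hull{a}$ onto $\hull{A}$, so its continuity and openness can be tested on basic neighbourhoods of $0$ on each side. The crucial link is that a basic neighbourhood of $0$ in $S_A$ (carrying the subspace topology inherited from $P_A$) is, up to the identification given by $\kal{A}$, precisely a set of the form $\nbh{W}{A}{F}$ intersected with $\hull{A}$: a basic open set in $P_A$ restricts coordinates in a finite set $F$ to lie in $\hull{a}\cap W$ while leaving the other coordinates free, and its image under $\kal{A}$ is exactly $\sum_{a\in F}(\hull{a}\cap W) + \hull{A\setminus F} = \nbh{W}{A}{F}$ (intersected with $\hull{A}$, since $\kal{A}$ maps onto $\hull{A}$).

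For part (i), I would prove both implications directly. For continuity: $\kal{A}$ is continuous at $0$ iff for every $\tau$-neighbourhood $U$ of $0$ in $G$, the preimage $\kal{A}^{-1}(U)$ contains a basic neighbourhood of $0$ in $S_A$; by the identification above this says exactly that $U\cap\hull{A}$ contains some $\nbh{W}{A}{F}\cap\hull{A}$, i.e. that $U\cap\hull{A}$ is $\t{A}{\tau}$-open in $\hull{A}$. By Proposition \ref{Matsuyama:split}, the condition $\tau\le\t{A}{\tau}$ is equivalent to its restricted form $\tau\restriction_{\hull{A}}\le\t{A}{\tau}\restriction_{\hull{A}}$, which is precisely the statement that every $\tau$-neighbourhood trace $U\cap\hull{A}$ is $\t{A}{\tau}$-open. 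Thus continuity of $\kal{A}$ matches $\tau\le\t{A}{\tau}$ exactly.

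For part (ii), openness of $\kal{A}$ onto its image $\hull{A}$ means that for every basic neighbourhood $N$ of $0$ in $S_A$, the image $\kal{A}(N)$ is a $\tau\restriction_{\hull{A}}$-neighbourhood of $0$ in $\hull{A}$. Again using the identification, $\kal{A}(N)$ ranges exactly over the sets $\nbh{W}{A}{F}\cap\hull{A}$, which form a base for $\t{A}{\tau}\restriction_{\hull{A}}$. So openness says precisely that every $\t{A}{\tau}\restriction_{\hull{A}}$-basic neighbourhood contains, or rather is, a $\tau\restriction_{\hull{A}}$-neighbourhood — equivalently that $\t{A}{\tau}\restriction_{\hull{A}}\le\tau\restriction_{\hull{A}}$.

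The step I expect to require the most care is the explicit verification that the image under $\kal{A}$ of a basic neighbourhood of $0$ in the subspace topology of $S_A\subseteq P_A$ is exactly $\nbh{W}{A}{F}\cap\hull{A}$, and conversely that $\kal{A}$ pulls these sets back to basic neighbourhoods. This requires being careful about the distinction between the direct sum $S_A$ and the product $P_A$: a basic $P_A$-neighbourhood of $0$ has the form $\prod_{a\in F}(\hull{a}\cap W)\times\prod_{a\notin F}\hull{a}$, and its intersection with $S_A$ consists of finitely supported tuples, whose images sum to elements of $\sum_{a\in F}(\hull{a}\cap W)+\hull{A\setminus F}$. One must confirm that the finite-support restriction does not shrink the image — it does not, because $\hull{A\setminus F}$ already consists of finite sums — so that the image is the full set $\nbh{W}{A}{F}$, whose intersection with the range $\hull{A}$ gives what is needed. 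Once this dictionary is established, both equivalences follow formally, with part (i) invoking the restriction equivalence of Proposition \ref{Matsuyama:split} to pass between $\tau\le\t{A}{\tau}$ and its trace on $\hull{A}$.
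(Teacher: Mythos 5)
Your proposal is correct and takes essentially the same approach as the paper: both hinge on the single computation that $\kal{A}$ maps the basic neighbourhood $\bigoplus_{a\in F}(\hull{a}\cap W)\oplus\bigoplus_{a\in A\setminus F}\hull{a}$ of $0$ in $S_A$ exactly onto $\nbh{W}{A}{F}$ (your finite-support verification is precisely the paper's ``straightforward check''), after which both equivalences are immediate from Definition \ref{def:modified:topology}. The only difference is cosmetic: in part (i) you detour through the trace condition $\tau\restriction_{\hull{A}}\le\t{A}{\tau}\restriction_{\hull{A}}$ and then invoke Proposition \ref{Matsuyama:split}, whereas the paper reads off $\tau\le\t{A}{\tau}$ directly from the containments $\nbh{W}{A}{F}\subseteq U$ --- since $\nbh{W}{A}{F}\subseteq\hull{A}$ in any case, your extra step is valid but unnecessary.
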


\begin{proof}
Note that the topology of $S_A$ has the family 
$$
\{\nbh{W}{A}{F}^*: 0\in W\in \tau, F\subseteq A\mbox{ is finite}\}
$$
as its base at $0$, where 
$$
\nbh{W}{A}{F}^*=\left(\bigoplus_{{a}\in {F}}(\hull{{a}} \cap W)\right)\oplus
\left(\bigoplus_{{a}\in {A}\setminus {F}} \hull{{a}}\right).
$$

A straightforward check shows that $\kal{A}(\nbh{W}{A}{F}^*) =\nbh{W}{A}{F}$ for every finite set $F\subseteq A$ and every neighbourhood $W$ of $0$ in $(G,\tau)$, where $\nbh{W}{A}{F}$ are as defined in \eqref{eq:WAF}. The rest follows from Definition \ref{def:modified:topology}.
\end{proof}

\begin{corollary}
For a subset $A$ of a topological group $G$ such that $0\not\in A$, the following conditions are equivalent:
\begin{itemize}
\item[(i)] the Kalton map $\kal{A}$ is a topologically isomorphic embedding;
\item[(ii)] the set $A$ is independent and $\t{A}{\tau}\restriction_{\langle A\rangle}=\tau\restriction_{\langle A\rangle}$, where $\tau$ is the topology of $G$.
\end{itemize}
\end{corollary}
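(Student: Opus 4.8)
The plan is to prove the corollary by combining Fact~\ref{fact} with the two halves of Proposition~\ref{when:is:kalton:continuous}, together with the transparent observation that the \emph{algebraic} part of ``topologically isomorphic embedding'' is exactly independence of $A$. Recall from Fact~\ref{fact} that $\kal{A}$ is a topologically isomorphic embedding precisely when it is simultaneously a monomorphism, continuous, and an open map onto its image $\kal{A}(S_A)=\hull{A}$. I would organize the argument by separating these three requirements: the monomorphism condition is purely algebraic and is by definition the independence of $A$, while the continuity and openness conditions are each characterized topologically by Proposition~\ref{when:is:kalton:continuous}.

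First I would handle the direction (i)$\to$(ii). Assuming $\kal{A}$ is a topologically isomorphic embedding, it is in particular a monomorphism, so $A$ is independent by the definition of the Kalton map preceding \eqref{eq:K_A}. Being a topological embedding, $\kal{A}$ is continuous, so Proposition~\ref{when:is:kalton:continuous}~(i) gives $\tau\le\t{A}{\tau}$; restricting to $\hull{A}$ yields $\tau\restriction_{\hull{A}}\le\t{A}{\tau}\restriction_{\hull{A}}$. Likewise $\kal{A}$ is an open map onto $\hull{A}$, so Proposition~\ref{when:is:kalton:continuous}~(ii) gives $\t{A}{\tau}\restriction_{\hull{A}}\le\tau\restriction_{\hull{A}}$. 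The two inequalities together yield the equality $\t{A}{\tau}\restriction_{\langle A\rangle}=\tau\restriction_{\langle A\rangle}$ claimed in (ii) (recalling that $\hull{A}$ and $\langle A\rangle$ denote the same subgroup).

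For the converse (ii)$\to$(i), I would assume $A$ is independent and that the two restricted topologies coincide on $\hull{A}$. Independence says $\kal{A}$ is injective. Splitting the assumed equality into the two inequalities $\tau\restriction_{\hull{A}}\le\t{A}{\tau}\restriction_{\hull{A}}$ and $\t{A}{\tau}\restriction_{\hull{A}}\le\tau\restriction_{\hull{A}}$, the first combined with Proposition~\ref{Matsuyama:split}~(i)$\leftrightarrow$(iii) upgrades to the global inequality $\tau\le\t{A}{\tau}$, whence $\kal{A}$ is continuous by Proposition~\ref{when:is:kalton:continuous}~(i); the second directly yields openness onto $\hull{A}$ by Proposition~\ref{when:is:kalton:continuous}~(ii). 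Thus $\kal{A}$ is a continuous open monomorphism onto its image, i.e.\ a topologically isomorphic embedding by Fact~\ref{fact}.

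The only genuinely subtle point—and the step I would watch most carefully—is the passage between the \emph{restricted} statement $\tau\restriction_{\hull{A}}\le\t{A}{\tau}\restriction_{\hull{A}}$ appearing in condition (ii) and the \emph{global} hypothesis $\tau\le\t{A}{\tau}$ needed to invoke Proposition~\ref{when:is:kalton:continuous}~(i). This is precisely why Proposition~\ref{Matsuyama:split} was proved first: its equivalence (i)$\leftrightarrow$(iii) is exactly the bridge that lets one pass from the restriction of the inequality to $\hull{A}$ to its global version, the non-trivial ingredient being that $\hull{A}$ is $\t{A}{\tau}$-open by Remark~\ref{observation:about:t:A}~(b). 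Everything else reduces to bookkeeping: matching the clauses of Fact~\ref{fact} against the two items of Proposition~\ref{when:is:kalton:continuous} and recording that the algebraic injectivity of $\kal{A}$ is the definition of independence.
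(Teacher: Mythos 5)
Your proof is correct and follows essentially the same route as the paper: the paper's (terser) proof likewise combines Proposition~\ref{when:is:kalton:continuous} with the equivalence of the global inequality $\tau\le\t{A}{\tau}$ and its restriction to $\hull{A}$ from Proposition~\ref{Matsuyama:split}, with the algebraic clause (independence $\leftrightarrow$ $\kal{A}$ monomorphism) being definitional. You have merely spelled out the bookkeeping that the paper leaves implicit, and you correctly pinpoint the Matsuyama:split bridge as the one non-trivial step.
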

\begin{proof}
The inequalities $\tau \leq \t{A}{\tau}$ and $\tau\restriction_{\langle A\rangle}\le\t{A}{\tau}\restriction_{\langle A\rangle}$ are equivalent by Proposition 
\ref{Matsuyama:split}.
Therefore, the conclusion of our corollary follows from Proposition \ref{when:is:kalton:continuous}.
\end{proof}

\section{Absolutely Cauchy summable sets and the continuity of the Kalton map}
\label{sec:abs:cauch:summable:sets}

\begin{definition}\label{def:abs:summ:set}
We say that a subset $A$ of a topological group $G$ is {\em absolutely Cauchy summable\/} provided that for every neighbourhood $U$ of $0$ there exists a finite set $F\subseteq A$ such that $\hull{A\setminus F}\subseteq U$.
\end{definition}

\begin{remark}
\label{remark:on:abs:C:summability}
\begin{itemize}
\item[(i)]
Every finite subset of a topological group is absolutely Cauchy summable.
\item[(ii)] A subset $A$ of a topological group $G$ is absolutely Cauchy summable in $G$ if and only if $A$ is absolutely Cauchy summable in the subgroup $\hull{A}$ of $G$. 
\end{itemize}
\end{remark}

\begin{remark}
\label{splitted:remark}
Let $A$ be a subset of a group $G$. Clearly, $A$ is absolutely Cauchy summable in $(G, \td{A})$. Therefore, {\em $A$ is absolutely Cauchy summable in $(G, \tau)$
for every topology $\tau$ on $G$ satisfying $\tau\le \td{A}$\/}.
\end{remark}

A typical example of an absolutely Cauchy summable set appears in direct sums.
\begin{lemma}
\label{Cauchy:summable:in:direct:sum}
Let $\{H_a:a\in A\}$ be a family of topological groups and $H=\bigoplus_{a\in A} H_a$ be its direct sum.
If $x_a\in H_a$ for every $a\in A$, then the set $X=\{x_a:a\in A\}$ is absolutely Cauchy summable in $H$.
\end{lemma}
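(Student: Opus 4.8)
The plan is to verify Definition \ref{def:abs:summ:set} directly, exploiting the fact that in a direct sum the chosen elements are supported on pairwise disjoint coordinates. Throughout I identify each $x_a\in H_a$ with the element of $H$ that equals $x_a$ in coordinate $a$ and $0$ elsewhere, so that $X\subseteq H$ as in the statement.

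First I would unravel the topology on $H$. Since $H=\bigoplus_{a\in A}H_a$ carries the subgroup topology inherited from the Tychonoff product $\prod_{a\in A}H_a$, a base of neighbourhoods of $0$ in $H$ consists of the sets $U=H\cap\prod_{a\in A}U_a$, where each $U_a$ is a neighbourhood of $0$ in $H_a$ and $U_a=H_a$ for every $a$ outside some finite set $F_0\subseteq A$. So I would fix an arbitrary neighbourhood $U$ of $0$ in $H$ and, without loss of generality, assume it has this basic form, determined by a finite set $F_0$.

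Next I would produce the required finite subset of $X$, namely $F=\{x_a:a\in F_0\}$, which is finite. The key observation is that every $y\in X\setminus F$ has support disjoint from $F_0$. Indeed, if $y=0$ it is supported nowhere; if $y\neq 0$ then, as an element of $H$ supported on a single coordinate, its support is a singleton $\{c\}$ with $y=x_c$ and $x_c\neq 0$. Were $c\in F_0$, we would get $y=x_c\in\{x_a:a\in F_0\}=F$, contradicting $y\notin F$ (note that for a nonzero element supported on $\{c\}$ the index $c$ is forced, which dissolves the set-versus-family ambiguity). Hence every $y\in X\setminus F$ vanishes on each coordinate in $F_0$. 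Since the group operation on $H$ is coordinatewise, every element of $\hull{X\setminus F}$ again vanishes on $F_0$, so $\hull{X\setminus F}\subseteq\{g\in H:g_a=0\ \text{for all}\ a\in F_0\}$. As $0\in U_a$ for $a\in F_0$ and $U_a=H_a$ otherwise, this latter set is contained in $U$; thus $\hull{X\setminus F}\subseteq U$, as required.

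I expect no serious obstacle here: the statement is essentially structural, and the only point demanding a little care is the bookkeeping around the passage from the index $a$ to the element $x_a$ (distinct indices can yield the same member of $X$ only when the corresponding $x_a$ are all $0$) together with the possibility that $0\in X$. Treating the zero element separately, as above, removes this minor subtlety and completes the argument.
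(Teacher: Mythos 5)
Your proof is correct and takes essentially the same route as the paper's: both extract a finite coordinate set $F_0$ from the definition of the product topology, set $F=\{x_a:a\in F_0\}$, and observe that $\hull{X\setminus F}$ lies in the tail sum $\bigoplus_{a\in A\setminus F_0}H_a\subseteq U$. Your careful bookkeeping about zero elements and coinciding values $x_a=x_b$ is precisely what the paper's ``Clearly'' elides, so it adds rigor without changing the argument.
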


\begin{proof} Let $U$ be a neighbourhood of $0$ in $H$. By the definition of the Tychonoff product topology,
there exists a finite set $F\subseteq A$ such that  $\bigoplus_{a\in A\setminus F} H_a\subseteq U$. Clearly, $Y=\{x_a:a\in F\}$ is a finite subset of $X$ such that
$\hull{X\setminus Y}\subseteq \bigoplus_{a\in A\setminus F} H_a$. This gives $\hull{X\setminus Y}\subseteq U$. Therefore, $X$ is absolutely Cauchy summable in $H$ by Definition 
\ref{def:abs:summ:set}.
\end{proof}

\begin{theorem}\label{Proposition:Udine} For a subset $A$ of a  topological group $G$ such that $0\not\in A$, the following conditions are equivalent: 
\begin{itemize} 
 \item[(i)] $A$ is absolutely Cauchy summable in $G$;
 \item[(ii)] 
$\kal{A}: S_A \to G$ is continuous.
\end{itemize}
\end{theorem}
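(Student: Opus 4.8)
The plan is to reduce Theorem \ref{Proposition:Udine} entirely to the machinery already assembled in the previous section. By Proposition \ref{when:is:kalton:continuous}~(i), continuity of $\kal{A}$ is equivalent to the topological inequality $\tau\le\t{A}{\tau}$, where $\tau$ is the given topology of $G$. On the other side, Proposition \ref{Matsuyama:split} tells us that $\tau\le\t{A}{\tau}$ is equivalent to $\tau\le\td{A}$, the comparison with the $A$-modification of the \emph{discrete} topology. So the whole theorem collapses to proving that $A$ is absolutely Cauchy summable if and only if $\tau\le\td{A}$.

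\medskip

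With this reduction in hand, the remaining equivalence is almost a restatement of definitions, and this is where I would spend the actual work. Recall from Remark \ref{linear:remark} that the sets $\hull{A\setminus F}$, as $F$ ranges over finite subsets of $A$, form a base of neighbourhoods of $0$ for $\td{A}$. First I would prove the direction ``absolutely Cauchy summable $\Rightarrow \tau\le\td{A}$''. Take an arbitrary $\tau$-open $U\ni 0$; absolute Cauchy summability (Definition \ref{def:abs:summ:set}) hands us a finite $F\subseteq A$ with $\hull{A\setminus F}\subseteq U$. Since $\hull{A\setminus F}$ is a basic $\td{A}$-neighbourhood of $0$ contained in $U$, and this holds for every $\tau$-open $U\ni 0$, we get $\tau\le\td{A}$. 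Conversely, assuming $\tau\le\td{A}$, every $\tau$-neighbourhood $U$ of $0$ is $\td{A}$-open, so by the base description in Remark \ref{linear:remark} it contains some basic set $\hull{A\setminus F}$ with $F$ finite; that is precisely the defining condition for $A$ to be absolutely Cauchy summable. Thus the two conditions coincide.

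\medskip

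I do not expect a genuine obstacle here, precisely because the conceptual difficulty has been front-loaded into Propositions \ref{when:is:kalton:continuous} and \ref{Matsuyama:split} and into the identification $\kal{A}(\nbh{W}{A}{F}^*)=\nbh{W}{A}{F}$. The only point requiring a little care is making sure the quantifier structure in the definition of absolute Cauchy summability lines up exactly with the meaning of the topological inequality $\tau\le\td{A}$: ``for every $U$ there is a finite $F$ with $\hull{A\setminus F}\subseteq U$'' must be read as ``every $\tau$-neighbourhood of $0$ contains a basic $\td{A}$-neighbourhood of $0$'', which is the standard criterion for one group topology to be coarser than another. Once that dictionary is spelled out, both implications are immediate. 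The assumption $0\notin A$ plays no essential role in this particular equivalence beyond guaranteeing that $\kal{A}$ and the associated notation are well defined, so I would simply carry it along from the hypotheses without further comment.
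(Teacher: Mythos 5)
Your proof is correct and takes essentially the same route as the paper: both reduce continuity of $\kal{A}$ to the inequality $\tau\le\t{A}{\tau}$ via Proposition~\ref{when:is:kalton:continuous}~(i), pass to $\tau\le\td{A}$ via Proposition~\ref{Matsuyama:split}, and then identify $\tau\le\td{A}$ with absolute Cauchy summability using the base description of $\td{A}$ from Remark~\ref{linear:remark} (the paper packages this last step as Remarks~\ref{observation:about:t:A}~(a) and~\ref{splitted:remark}). The only cosmetic slip is your phrase ``every $\tau$-neighbourhood of $0$ is $\td{A}$-open,'' which should read ``every $\tau$-open neighbourhood''; since every neighbourhood contains an open one, this does not affect the argument.
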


\begin{proof} 
Let $\tau$ be the topology of $G$.

(i)$\to$(ii)
Let $U$ be an arbitrary $\tau$-neighbourhood of $0$ in $G$. Since $A$ is absolutely Cauchy summable in $(G,\tau)$, there exists a finite set $F\subseteq A$ such that $\hull{A\setminus F}\subseteq U$.
Since $\hull{A\setminus F}$ is a $\td{A}$-neighbourhood of $0$ in $G$ by Remark
\ref{linear:remark}, we conclude that $U\in \td{A}$. This establishes the inclusion $\tau \leq \td{A}$. Applying Proposition \ref{Matsuyama:split}, we get the inclusion $\tau \leq \t{A}{\tau}$. Combining this with Proposition \ref{when:is:kalton:continuous}~(i), we obtain the continuity of the Kalton map $\kal{A}: S_A \to G$.

(ii)$\to$(i) By (ii) and Proposition \ref{when:is:kalton:continuous}~(i), the inequality 
$\tau\le\t{A}{\tau}$ holds. By Remark \ref{observation:about:t:A}~(a), $\t{A}{\tau}\le\td{A}$. Therefore, $A$ is absolutely Cauchy summable in $(G,\tau)$ by Remark \ref{splitted:remark}.
\end{proof}

\section{Topologically independent sets}
\label{sec:top:indep:sets}

Recall that a subset $A$ of non-zero elements of  a group $G$ is {\em independent\/} provided that for every finite subset $F\subseteq A$ and every indexed set $\{z_a:a\in F\}$ of integers the equality $\sum_{a\in F}z_aa=0$ implies that $z_aa=0$ for all $a\in F$. Our next definition is a topological analogue of this classical notion.

\begin{definition}
\label{def:topological:independence}
A subset $A$ of non-zero elements of  a topological group $G$ is called {\em topologically independent\/} provided that for every neighbourhood $W$ of $0$ there exists neighbourhood $U$ of $0$ such that for every finite subset $F\subseteq A$ and every indexed set $\{z_a:a\in F\}$ of integers the inclusion $\sum_{a\in F}z_aa\in U$ implies that $z_aa\in W$ for all $a\in F$. We will call this $U$ a {\em $W$-witness\/} of the topological independence of $A$.
\end{definition}

Informally speaking, $A$ is topologically independent if for every neighbourhood $W$ of $0$ there exists a neighbourhood $U$ of $0$ such that, whenever $\sum_{a\in F}z_aa$ is $U$-close to zero, then all $z_aa$ are $W$-close to zero. Thus, in the definition of a topologically independent set, the algebraic ``equality to zero" from the definition of an independent set is replaced by the topological notion of ``being close to zero". 

It is clear that a subset of a discrete topological group is topologically independent precisely when it is independent, so one can view topological independence as a generalization of the classical notion.

The following lemma shows that the notion of topological independence is also a strengthening of the classical notion.

\begin{lemma}\label{lemma:top:ind:is:ind}
Every topologically independent set is independent.
\end{lemma}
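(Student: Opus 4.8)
The plan is to prove the contrapositive: assuming $A$ is \emph{not} independent, I would exhibit a neighbourhood $W$ of $0$ that admits no $W$-witness, thereby showing $A$ is not topologically independent. Since $A$ fails to be independent, by definition there exist a finite set $F\subseteq A$ and integers $\{z_a : a\in F\}$ with $\sum_{a\in F} z_a a = 0$ but with $z_{a_0} a_0 \neq 0$ for at least one $a_0\in F$. The element $b = z_{a_0} a_0$ is a nonzero element of the Hausdorff group $G$.

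The key step is to use the Hausdorff assumption (recall the standing convention that all topological groups are Hausdorff). Because $G$ is Hausdorff and $b\neq 0$, there is a neighbourhood $W$ of $0$ with $b\notin W$; that is, $z_{a_0} a_0 \notin W$. I claim this $W$ has no $W$-witness. Indeed, suppose toward a contradiction that some neighbourhood $U$ of $0$ were a $W$-witness. The relation $\sum_{a\in F} z_a a = 0 \in U$ holds regardless of $U$, since $0$ lies in every neighbourhood of $0$. By the defining property of a $W$-witness applied to this particular finite set $F$ and these particular integers $\{z_a : a\in F\}$, we would be forced to conclude $z_a a \in W$ for every $a\in F$, and in particular $z_{a_0} a_0 \in W$, contradicting the choice of $W$.

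Therefore no neighbourhood of $0$ can serve as a $W$-witness for this $W$, so $A$ is not topologically independent, completing the contrapositive. I do not anticipate a serious obstacle here: the argument is essentially a direct comparison of Definition \ref{def:topological:independence} with the classical notion of independence, the only genuine ingredient being that the \emph{equality} $\sum_{a\in F} z_a a = 0$ automatically witnesses membership in $U$ (via $0\in U$), which is exactly what lets the topological witness condition collapse onto the algebraic conclusion. The one point requiring care is the appeal to the Hausdorff property to separate the nonzero element $z_{a_0} a_0$ from $0$; without it the statement could fail, so I would make that dependence explicit.
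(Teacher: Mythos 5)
Your argument is correct and is essentially the paper's proof read contrapositively: both rest on the same two observations, namely that the equality $\sum_{a\in F}z_aa=0$ automatically places the sum in every candidate witness $U$ (since $0\in U$), and that the Hausdorff assumption separates a nonzero $z_{a_0}a_0$ from $0$. The paper argues directly (for each $a\in F$, the element $z_aa$ lies in every neighbourhood $W$ of $0$, hence equals $0$ by Hausdorffness), but this is the same comparison of definitions, so no substantive difference.
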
   

\begin{proof}
Assume that $F$ is a finite subset of a topologically independent set $A$ in a topological group $G$ and $\{z_a:a\in F\}$ is an indexed set of integers such that $\sum_{a\in F}z_aa=0$. Fix $a\in F$. Let $W$ be an arbitrary neighbourhood of $0$. Since $A$ is topologically independent, we can find $U$ which is a $W$-witness of this. Since $\sum_{a\in F}z_aa=0\in U$, it follows that  $z_aa\in W$. Since this holds for an arbitrary $W$ and $G$ is (silently assumed to be) Hausdorff, this implies $z_aa=0$. This means that $A$ is independent.
\end{proof}

The next lemma and example that follows it both highlight the difference between the topological independence and the classical (algebraic) independence.

\begin{lemma}\label{lemma:step:to:Lie}
For every $n\in \N$, each topologically independent subset of $\R^n$  has size at most $n$.
\end{lemma}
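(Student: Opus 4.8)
The plan is to argue by contradiction, exploiting the gap between dependence over $\R$ and dependence over $\Z$. Suppose some topologically independent $A \subseteq \R^n$ had $|A| \ge n+1$, and fix distinct $a_0, a_1, \ldots, a_n \in A$; these are non-zero by the definition of topological independence. By Lemma \ref{lemma:top:ind:is:ind} the set $\{a_0, \ldots, a_n\}$ is independent, so the homomorphism $\phi : \Z^{n+1} \to \R^n$ sending $(z_0, \ldots, z_n)$ to $\sum_{i=0}^{n} z_i a_i$ is injective: in the torsion-free group $\R^n$ the relation $z_i a_i = 0$ forces $z_i = 0$. Hence $\Gamma = \hull{\{a_0, \ldots, a_n\}} = \phi(\Z^{n+1})$ is a free abelian subgroup of $\R^n$ of rank $n+1$.

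The key geometric observation is that such a $\Gamma$ cannot be discrete. Indeed, every discrete subgroup of $\R^n$ is a lattice in its real linear span, so its rank equals the dimension of that span and is therefore at most $n$; but $\Gamma$ has rank $n+1 > n$. Since a subgroup of a topological group is either discrete or clusters at every one of its points, $0$ is not isolated in $\Gamma$. Consequently every neighbourhood of $0$ in $\R^n$ contains some $\gamma = \sum_{i=0}^{n} z_i a_i$ with $\gamma \in \Gamma \setminus \{0\}$, and by injectivity of $\phi$ the integers $z_0, \ldots, z_n$ are then not all zero.

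It remains to contradict topological independence. I would take $W$ to be the open ball about $0$ of radius $m = \min_{0 \le i \le n} \|a_i\| > 0$, so that $z_i a_i \in W$ holds precisely when $z_i = 0$. Given any candidate $W$-witness $U$, the previous paragraph furnishes integers $z_0, \ldots, z_n$, not all zero, with $\sum_{i=0}^{n} z_i a_i \in U$; choosing $j$ with $z_j \ne 0$ gives $z_j a_j \notin W$, so $U$ fails to witness the topological independence of $A$. As $W$ admits no witness, $A$ is not topologically independent, a contradiction; hence $|A| \le n$. The one substantive point is precisely the non-discreteness of a rank-$(n+1)$ subgroup of $\R^n$, which converts the trivial real linear dependence of $n+1$ vectors into the existence of arbitrarily small non-trivial integer combinations; one could equally invoke Dirichlet's theorem on simultaneous approximation here in place of the structure theorem for discrete subgroups.
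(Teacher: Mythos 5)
Your proof is correct and takes essentially the same route as the paper's: both argue by contradiction, observing that the rank-$(n+1)$ free abelian subgroup generated by $n+1$ independent elements cannot be discrete in $\R^n$, so every candidate witness $U$ contains a non-zero integer combination, while an open ball $W$ of radius $\min_{a\in F}\|a\|$ meets each cyclic subgroup $\hull{a}$ only in $0$. The sole cosmetic difference is the form of the classical Bourbaki input: the paper invokes the structure of \emph{closed} subgroups of $\R^n$ (the closure of $\hull{F}$ contains a line), whereas you invoke that discrete subgroups of $\R^n$ are lattices of rank at most $n$ --- the same structure theorem viewed from the other side.
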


\begin{proof}
Assume that $F$ is a finite topologically independent subset of $\R^n$ of size $n+1$. Since all cyclic subgroups of $\R^n$ are discrete, we can choose an open ball $W$ around $0$ such that 
\begin{equation}
\label{eq:5}
W\cap \hull{a}=\{0\}
\ 
\mbox{ for all }
\ 
a\in F.
\end{equation}
Since $F$ is topologically independent,  we can fix an open set $U$ containing $0$ which is a $W$-witness of the topological independence of $F$.
Since $F$ is topologically independent, it is independent by Lemma \ref{lemma:top:ind:is:ind}. Let $H$ be the closure of $\hull{F}$.  Since $F$ contains $n+1$-many elements, the closed subgroup $H$ of $\R^n$  is non-discrete; in fact, $H$ contains a line passing through $0$, see \cite[Chap.VII, Th.1 and Prop.3]{Bourbaki}. Therefore, since $U$ is an open set containing $0$, 
one can find some non-zero element $\sum_{a\in F}z_aa\in U$,  where $z_a\in\Z$ for $a\in F$.  Since $U$ is a $W$-witness, this implies that $z_aa\in W$
for all $a\in F$. Combining this with \eqref{eq:5}, we conclude that $z_aa=0$ for all $a\in F$. This contradicts the fact that $\sum_{a\in F}z_aa\not=0$.
\end{proof}

\begin{example}\label{discrete:top:indep:iff:indep} 
While the topological group $\R$ contains an independent subset of size $\cont$, every topologically independent subset of $\R$ has size at most $1$, by Lemma \ref{lemma:step:to:Lie}.
\end{example}

\begin{remark}\label{lemma:obvious}
A subset $A$ of non-zero elements of a topological group $G$ is topologically independent if and only if $A$ is topologically independent in the subgroup $\hull{A}$ of $G$.
\end{remark}

The next lemma provides a typical example of a topologically independent set.
\begin{lemma}
\label{topologcally:independent:in:direct:sum}
Let $\{H_a:a\in A\}$ be a family of topological groups and $H=\bigoplus_{a\in A} H_a$ be its direct sum.
If $x_a\in H_a\setminus \{0\}$ for every $a\in A$, 
then the set $X=\{x_a:a\in A\}$ is topologically independent in $H$.
\end{lemma}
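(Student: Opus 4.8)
The plan is to verify the defining condition of topological independence directly for the set $X=\{x_a:a\in A\}$ in the direct sum $H=\bigoplus_{a\in A}H_a$, exploiting the coordinate-wise structure of the Tychonoff product topology. First I would recall that $x_a\in H_a\setminus\{0\}$ means each element of $X$ is non-zero, so the requirement $0\notin X$ is satisfied. The key observation is that each $x_a$ is supported on a single coordinate $a$, so for integers $z_a$ the element $z_ax_a$ also lives entirely in the $a$-th coordinate; consequently a finite sum $\sum_{a\in F}z_ax_a$ is simply the element of $H$ whose $a$-th coordinate equals $z_ax_a$ for $a\in F$ and is $0$ elsewhere. This makes the coordinates of the sum \emph{decouple} completely.

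Given a neighbourhood $W$ of $0$ in $H$, I would produce the required $W$-witness $U$ as follows. Shrinking $W$ if necessary, we may assume $W$ is a basic neighbourhood of the product topology, so there is a finite set $E\subseteq A$ and neighbourhoods $W_a$ of $0$ in $H_a$ for $a\in E$ with $W\supseteq\prod_{a\in E}W_a\times\bigoplus_{a\in A\setminus E}H_a$ (intersected with $H$). The natural candidate is to take $U=W$ itself, or more cautiously a basic neighbourhood $U$ determined by the same finite set $E$ and by neighbourhoods $U_a\subseteq W_a$ (for instance $U_a=W_a$). Then suppose $\sum_{a\in F}z_ax_a\in U$ for some finite $F\subseteq A$ and integers $z_a$. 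Reading off the $b$-th coordinate of this sum for each $b\in F$: if $b\in E$ the coordinate $z_bx_b$ must lie in $U_b\subseteq W_b$, and if $b\notin E$ then the constraint from $U$ on that coordinate is vacuous (the whole group $H_b$ is allowed). I then need to conclude $z_bx_b\in W$ for every $b\in F$, which follows because $z_bx_b$ is supported only on coordinate $b$ and its single non-zero coordinate lands in $W_b$ when $b\in E$, while for $b\notin E$ the element $z_bx_b$ automatically lies in $W$ since $W$ imposes no restriction outside $E$.

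The one point demanding care — and what I expect to be the main obstacle — is matching up the witness $U$ so that the implication runs in the correct direction across the two regimes $b\in E$ and $b\notin E$ simultaneously. For coordinates inside $E$ the containment $\sum_F z_ax_a\in U$ directly forces $z_bx_b\in W_b$; for coordinates outside $E$ one must instead invoke the fact that $W$ contains all of $\bigoplus_{a\in A\setminus E}H_a$, so any element supported off $E$ is in $W$. A clean way to package this is to observe, as in Lemma \ref{Cauchy:summable:in:direct:sum}, that the single-coordinate support of each $z_bx_b$ is exactly what lets the product-topology neighbourhood be analysed coordinate by coordinate. Once the decoupling is made explicit, choosing $U=W$ with $W$ basic suffices, and the verification reduces to the remark that projecting a sum of single-coordinate vectors onto one coordinate recovers the corresponding summand.

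Finally I would note that this result is in a sense dual to Lemma \ref{Cauchy:summable:in:direct:sum}: both express that the canonical generators of a direct sum behave as well as possible, the earlier lemma giving absolute Cauchy summability and this one giving topological independence, which together are precisely the two ingredients that (by the results announced in the introduction, e.g.\ Theorem \ref{Kalton:top:iso}) make the Kalton map a topologically isomorphic embedding. I would keep the write-up short, since no genuine estimation is needed once the coordinate-wise reduction is in place.
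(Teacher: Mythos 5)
Your argument is correct and is essentially the paper's own proof: both pass to a basic box neighbourhood $U=\bigoplus_{a\in E}W_a\oplus\bigoplus_{a\in A\setminus E}H_a\subseteq W$ and observe that, since each $z_bx_b$ is supported on the single coordinate $b$, membership of $\sum_{a\in F}z_ax_a$ in $U$ decouples coordinate-wise, making $U$ its own witness and hence a $W$-witness. Just discard your first candidate $U=W$ for arbitrary $W$ (the witness implication can fail for a non-box neighbourhood, e.g.\ a thin diagonal strip in $\R^2$) and keep only your ``cautious'' version where $U$ is the basic box contained in $W$, which is exactly the paper's choice.
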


\begin{proof}
Fix a neighbourhood $W$ of $0$ in $H$. By the definition of the Tychonoff product topology, there exist a finite set $F\subseteq A$ 
and an open neighbourhood $V_a$ of $0$ in $H_a$ such that $U\subseteq W$, where
$U=\bigoplus_{a\in F} V_a\oplus\bigoplus_{a\in A\setminus F}H_a$.
Observe that $U$ is a $U$-witness of the topological independence of $X$. As $U\subseteq W$, it is a $W$-witness as well.
\end{proof}

Our next lemma shows that topological independence of a set $A$ assures that the Kalton map is open.

\begin{proposition}\label{Proposition:Matsuyama}
Let $G$ be a topological group, $A\subseteq G\setminus\{0\}$
and let $\kal{A}:S_A\to G$ be the associated Kalton homomorphism.
\begin{itemize}
 \item[(i)] If $A$ is  topologically independent in $G$, then $\kal{A}$ is an open map onto $\hull{A}$.
 \item[(ii)] If $\kal{A}$ is a topologically isomorphic embedding, then $A$ is topologically independent in $G$.
\end{itemize}
\end{proposition}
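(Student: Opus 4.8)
The plan is to reduce both parts to machinery already in place: part (i) to Proposition \ref{when:is:kalton:continuous}(ii), and part (ii) to Lemma \ref{topologcally:independent:in:direct:sum} together with the fact that topological independence is carried across topological isomorphisms. Throughout let $\tau$ denote the topology of $G$.

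For (i), assume $A$ is topologically independent. By Proposition \ref{when:is:kalton:continuous}(ii) it suffices to prove $\t{A}{\tau}\restriction_{\langle A\rangle}\le\tau\restriction_{\langle A\rangle}$, and since these are group topologies on $\langle A\rangle$ this amounts to showing that every basic $\t{A}{\tau}$-neighbourhood $\nbh{W}{A}{F}$ of $0$ contains $V\cap\langle A\rangle$ for a suitable $\tau$-neighbourhood $V$ of $0$. First I would record, via Lemma \ref{lemma:top:ind:is:ind}, that $A$ is independent, so $\kal{A}$ is a monomorphism and every $g\in\langle A\rangle$ has a unique expression $g=\sum_{a\in A}g_a$ with $g_a\in\langle a\rangle$ and $g_a=0$ for all but finitely many $a$. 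The key observation is then that whenever $g_a\in W$ for every $a\in F$, one has $g\in\nbh{W}{A}{F}$, because $\sum_{a\in F}g_a\in\sum_{a\in F}(\langle a\rangle\cap W)$ while $\sum_{a\notin F}g_a\in\langle A\setminus F\rangle$. Finally I would take $V=U$, where $U$ is a $W$-witness of the topological independence of $A$: for $g=\sum_{a\in E}g_a\in U\cap\langle A\rangle$, with $E$ the finite support of $g$, the witness property gives $g_a\in W$ for every $a\in E$, hence for every $a\in F$ as well (the remaining coordinates of $F$ being $0\in W$), so $g\in\nbh{W}{A}{F}$. This yields $U\cap\langle A\rangle\subseteq\nbh{W}{A}{F}$, as required.

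For (ii), assume $\kal{A}$ is a topologically isomorphic embedding, so it is a topological isomorphism of $S_A$ onto $\langle A\rangle=\kal{A}(S_A)$. In $S_A=\bigoplus_{a\in A}\langle a\rangle$ I would consider the canonical generators $x_a$, where $x_a$ has $a$-th coordinate $a$ and all other coordinates $0$; since $a\neq 0$, each $x_a$ is a non-zero element of the $a$-th summand, so $X=\{x_a:a\in A\}$ is topologically independent in $S_A$ by Lemma \ref{topologcally:independent:in:direct:sum}. As $\kal{A}$ restricts to the inclusion on each summand, $\kal{A}(x_a)=a$, whence $\kal{A}(X)=A$. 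I would then verify the routine fact that a topological isomorphism sends topologically independent sets to topologically independent sets, by transporting a $W$-witness through the homeomorphism; this shows $A=\kal{A}(X)$ is topologically independent in $\kal{A}(S_A)=\langle A\rangle$. By Remark \ref{lemma:obvious} this is equivalent to $A$ being topologically independent in $G$.

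I expect the only genuinely delicate point to be the bookkeeping in (i): one must see that, thanks to independence, membership of an element of $\langle A\rangle$ in $\nbh{W}{A}{F}$ is governed solely by its coordinates indexed by $F$, so that a single $W$-witness $U$ simultaneously handles every finite $F$. Everything else—the preservation of topological independence under topological isomorphism in (ii) and the passage from $\langle A\rangle$ back to $G$—is routine.
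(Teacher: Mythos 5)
Your proof is correct and follows essentially the same route as the paper's: part (i) reduces to Proposition \ref{when:is:kalton:continuous}(ii) via the inclusion $U\cap\hull{A}\subseteq\nbh{W}{A}{F}$ for a single $W$-witness $U$, and part (ii) transports topological independence from the canonical generators of $S_A$ (Lemma \ref{topologcally:independent:in:direct:sum}) through the topological isomorphism and back to $G$ via Remark \ref{lemma:obvious}. The only difference is that you spell out the inclusion the paper merely asserts---correctly, though your appeal to independence and unique supports in (i) is unnecessary, since the witness property applies to \emph{every} finite representation $g=\sum_{a\in E}z_a a\in U$ regardless of uniqueness.
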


\begin{proof}
(i)
Let $\tau$ be the topology of $G$.
By Proposition \ref{when:is:kalton:continuous}(ii), it suffices to check that $\t{A}{\tau}\restriction_{\hull{A}}\le\tau\restriction_{\hull{A}}$.
Fix a $\tau$-neighbourhood $W$ of $0$ and a finite set $F\subseteq  A$. Since $A$ is topologically independent, we can find a $\tau$-neighbourhood $U$ of $0$ which is a $W$-witness of topological independence of $A$. Then $U\cap\hull{A}\subseteq\nbh{W}{A}{F}$. Combined with Definition \ref{def:modified:topology}, this establishes the inequality $\t{A}{\tau}\restriction_{\hull{A}}\le\tau\restriction_{\hull{A}}$.

(ii) By our assumption, $\kal{A}:S_A\to G$ is a topologically isomorphic embedding.
For every $a\in A$, let $x_a=\kal{A}^{-1}(a)$. Then $x_a$ is a non-zero element of the summand $\hull{a}$ of $S_A$. By Lemma \ref{topologcally:independent:in:direct:sum}, the set $X=\{x_a:a\in A\}$ is topologically independent in $S_A$. Since $\kal{A}:S_A\to G$ is a topologically isomorphic embedding, 
$\kal{A}:S_A\to \kal{A}(S_A)=\hull{A}$ is a topological isomorphism, so 
$A=\kal{A}(X)$ is topologically independent in $\hull{A}$. By Remark \ref{lemma:obvious}, $A$ is topologically independent also in $G$.
\end{proof}

\begin{proposition}\label{direct:sum:top:indep} \label{top:indep:for:finite}
For a finite subset $A$ of a topological group $G$, the following conditions are equivalent:
\begin{itemize}
 \item[(i)] the Kalton map $\kal{A}: S_A\to G$ is a topologically isomorphic embedding;
 \item[(ii)] $A$ is topologically independent in $G$.
\end{itemize} 
\end{proposition}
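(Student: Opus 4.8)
The plan is to prove the equivalence by establishing that for \emph{finite} sets $A$, absolute Cauchy summability is automatic, so that topological independence becomes the only missing ingredient. More precisely, I would reduce everything to results already proved: Theorem \ref{Kalton:top:iso} tells us (once it is available) that $\kal{A}$ is a topologically isomorphic embedding if and only if $A$ is both topologically independent \emph{and} absolutely Cauchy summable. But by Remark \ref{remark:on:abs:C:summability}~(i), every finite subset of a topological group is absolutely Cauchy summable. Hence for finite $A$ the absolute Cauchy summability condition is vacuously satisfied, and the embedding property collapses to topological independence alone. This gives both directions at once.

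Since the excerpt stops just before Theorem \ref{Kalton:top:iso} is stated, I would instead give a self-contained argument using only the propositions already available, namely Proposition \ref{when:is:kalton:continuous} and Proposition \ref{Proposition:Matsuyama}. For the direction (ii)$\to$(i): assume $A$ is topologically independent. First, $A$ is independent by Lemma \ref{lemma:top:ind:is:ind}, so the Kalton map $\kal{A}$ is an (algebraic) monomorphism. By Proposition \ref{Proposition:Matsuyama}~(i), topological independence makes $\kal{A}$ an open map onto $\hull{A}$. It remains to check continuity, i.e.\ by Proposition \ref{when:is:kalton:continuous}~(i) the inequality $\tau\le\t{A}{\tau}$; this follows since $A$, being finite, is absolutely Cauchy summable in $G$ by Remark \ref{remark:on:abs:C:summability}~(i), which via Theorem \ref{Proposition:Udine} yields continuity of $\kal{A}$. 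Thus $\kal{A}$ is a continuous open monomorphism onto its image, hence a topologically isomorphic embedding. The direction (i)$\to$(ii) is immediate from Proposition \ref{Proposition:Matsuyama}~(ii), which holds for arbitrary (not necessarily finite) $A$.

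The step I expect to carry the real content is \emph{continuity}, and the key observation that unlocks it is the finiteness of $A$: absolute Cauchy summability, which in general is a genuine restriction, holds trivially for finite sets because the relevant finite set $F$ in Definition \ref{def:abs:summ:set} can be taken to be $A$ itself, making $\hull{A\setminus F}=\hull{\emptyset}=\{0\}\subseteq U$ for every neighbourhood $U$ of $0$. There is essentially no obstacle here beyond invoking the correct earlier result; the proposition is really a corollary recording that in the finite case the two hypotheses of the general characterization degenerate to one. I would present it precisely this way: state that finiteness renders absolute Cauchy summability automatic, cite Theorem \ref{Proposition:Udine} (or directly Remark \ref{splitted:remark}) for continuity and Proposition \ref{Proposition:Matsuyama} for the open-map and converse directions, and conclude.
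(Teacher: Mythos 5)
Your proposal is correct and matches the paper's own proof essentially verbatim: the paper likewise gets (i)$\to$(ii) from Proposition \ref{Proposition:Matsuyama}~(ii), and for (ii)$\to$(i) combines Lemma \ref{lemma:top:ind:is:ind} (independence, hence $\kal{A}$ is an algebraic isomorphism), Remark \ref{remark:on:abs:C:summability}~(i) with Theorem \ref{Proposition:Udine} for continuity, and Proposition \ref{Proposition:Matsuyama}~(i) for openness. Your observation that finiteness makes absolute Cauchy summability automatic (take $F=A$ in Definition \ref{def:abs:summ:set}) is exactly the point the paper's proof turns on.
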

\begin{proof}
The implication (i)$\to$(ii) was proved in Proposition \ref{Proposition:Matsuyama}~(ii).

(ii)$\to$(i) By Lemma \ref{lemma:top:ind:is:ind}, $A$ is independent. Hence the Kalton map $\kal{A}$ is an (algebraical) isomorphism.
Since $A$ is finite, it is absolutely Cauchy summable by Remark \ref{remark:on:abs:C:summability}~(i).
Applying Theorem \ref{Proposition:Udine}, we conclude that 
$\kal{A}$ is continuous. By Proposition \ref{Proposition:Matsuyama}~(i), $\kal{A}$ is an open map onto $\hull{A}$. This finishes the proof of (i).
\end{proof}

In the rest of this section we discuss topologically independent sets arising in functional analysis.
Recall that a sequence $\{e_i:i\in\N\}$ in a normed vector space $V$ is called a {\em Schauder basis} provided that for every $v\in V$ there exists unique sequence $\{s_i:i\in\N\}$ of scalars such that $v=\sum_{n=0}^\infty s_ne_n$, where the convergence is taken with respect to the norm; that is, $\lim_{n\to\infty}||v-\sum_{i=0}^n s_i e_i ||= 0.$  

\begin{proposition}\label{schauder:is:top:ind}
Every Schauder basis in a Banach space is topologically independent.
\end{proposition}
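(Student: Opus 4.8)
The plan is to regard the Banach space $V$ as an abelian topological group under addition and to verify the defining condition of topological independence for the set $A=\{e_i:i\in\N\}$ directly, exploiting the partial sum projections attached to the Schauder basis. Note first that each $e_i$ is non-zero (otherwise uniqueness of the expansions would fail), so $0\notin A$, as required by Definition \ref{def:topological:independence}.

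The one genuinely non-trivial ingredient I would invoke is the classical fact that the partial sum projections of a Schauder basis in a Banach space are uniformly bounded. Concretely, for $n\in\N$ let $S_n\colon V\to V$ send $v=\sum_{i=0}^\infty s_ie_i$ to its $n$-th partial sum $\sum_{i=0}^n s_ie_i$, and set $S_{-1}=0$. Completeness of $V$ guarantees that the basis constant $K=\sup_n\|S_n\|$ is finite: one equips $V$ with the auxiliary norm $\|v\|_0=\sup_n\|S_n(v)\|$ (finite because $S_n(v)\to v$), checks that $(V,\|\cdot\|_0)$ is again complete and that $\|\cdot\|\le\|\cdot\|_0$, and then applies the open mapping theorem to the identity map $(V,\|\cdot\|_0)\to(V,\|\cdot\|)$ to produce a constant $M$ with $\|\cdot\|_0\le M\|\cdot\|$, whence $\|S_n\|\le M$ for all $n$. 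Securing this uniform bound is the crux of the matter; everything else is bookkeeping.

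With $K$ in hand I would isolate a single coordinate. Given a finite set $F\subseteq A$ and integers $\{z_a:a\in F\}$, put $v=\sum_{a\in F}z_aa$. Integers are in particular scalars, so the uniqueness of the Schauder expansion identifies the coefficients of $v$ at once: writing $a=e_i$, the $i$-th coefficient of $v$ equals $z_a$, and therefore $z_aa=S_i(v)-S_{i-1}(v)$. The triangle inequality together with the uniform bound then gives $\|z_aa\|\le\|S_i(v)\|+\|S_{i-1}(v)\|\le 2K\|v\|$ for every $a\in F$.

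It remains to produce the witness. Let $W$ be an arbitrary neighbourhood of $0$ and choose $\varepsilon>0$ with $\{v:\|v\|<\varepsilon\}\subseteq W$. Set $U=\{v:\|v\|<\varepsilon/(2K)\}$. If $v=\sum_{a\in F}z_aa\in U$, then $\|v\|<\varepsilon/(2K)$, so the estimate above yields $\|z_aa\|<\varepsilon$, and hence $z_aa\in W$ for every $a\in F$. Thus $U$ is a $W$-witness of the topological independence of $A$, which completes the argument. As indicated, the sole obstacle is the finiteness of the basis constant $K$; once that uniform boundedness is available the verification is a routine rescaling that in fact works verbatim for arbitrary real coefficients, and so \emph{a fortiori} for the integer coefficients occurring in the definition of topological independence.
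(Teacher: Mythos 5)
Your proof is correct and follows essentially the same route as the paper: uniform boundedness of the partial sum projections (the paper cites this as a theorem of Banach, where you sketch its standard open-mapping-theorem proof), extraction of each coordinate via the difference $S_i - S_{i-1}$ to get the bound $\|z_a a\|\le 2K\|v\|$, and the witness ball of radius $\varepsilon/(2K)$. The only cosmetic difference is that you work with an arbitrary finite $F\subseteq A$ directly while the paper writes the sum over an initial segment, which amounts to the same thing after padding with zero coefficients.
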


\begin{proof}
Let $E=\{e_i:i\in\N\}$ be a Schauder basis in a Banach space $B$. For $x=\sum_{i=0}^\infty a_ie_i\in B$ and $n\in\N$ put $P_n(x)=\sum_{i=0}^na_ie_i$. Then $P_n:B\to B$ is a linear projection for each $n\in\N$. Furthermore, by a theorem of Banach, these projections are uniformly bounded; see, for instance, \cite[Theorem 237]{HHZ}. That is, there exists $C\in\R$ such that 
$$
||P_n(x)||\le C||x|| \mbox{ for every $n\in\N$ and $x\in B$}.
$$ 
Put $P_{-1}(x)=0$ for each $x\in B$. Define $\pi_n=P_n-P_{n-1}$. Then for all $x\in B$ and $n\in\N$ we have

\begin{equation}\label{eq:schauder}
||\pi_n(x)||=||P_n(x)-P_{n-1}(x)||\le||P_n(x)||+||P_{n-1}(x)||\le 2C||x||. 
\end{equation} 

Take a neighbourhood $W$ of $0$. Then there exists $\varepsilon>0$ such that the open ball $B(0;\varepsilon)$
with the center at $0$ and  diameter $\varepsilon$ is a subset of $W$. We claim that the open ball $B(0;\frac{\varepsilon}{2C})$ is a $W$-witness of the topological independence of $E$.
Indeed, if $x=\sum_{i=0}^na_ie_i\in B(0;\frac{\varepsilon}{2C})$,  then 
$$
||a_ie_i||=||\pi_i(x)||\le 2C||x||<2C \frac{\varepsilon}{2C}=\varepsilon
$$ 
by \eqref{eq:schauder}. Thus $a_ie_i\in B(0;\varepsilon)\subseteq W$  for all $i=0,\ldots,n$. 
\end{proof}

\begin{remark}\label{rem:on:Schauder}
A Banach space $B$ with an infinite Schauder basis $A$ never contains a topologically isomorphic copy of $S_A$ (as it contains no infinite direct sums at all). In view of Proposition \ref{schauder:is:top:ind},  this shows that 
(i) and (ii) of Proposition \ref{direct:sum:top:indep} are not equivalent in general.
\end{remark}

Recall that a subset $A$ of a vector space is {\em linearly
independent} if for every finite set $B\subseteq A$ and each set
$\{r_b:b\in B\}$ of real numbers the equality $\sum_{b\in B}r_bb=0$
implies $r_b=0$ for all $b\in B$. Equivalently, $A$ is linearly
independent if every finite set $B\subseteq A$ generates a
$|B|$-dimensional Euclidean space.

\begin{proposition}\label{prop:step:to:Lie}
Let $A$ be a subset of a topological vector space.
\begin{itemize}
\item[(i)] If $A$ is topologically independent, then it is linearly independent.
\item[(ii)] If $A$ is finite and linearly independent, then it is topologically independent.
\end{itemize}
 \end{proposition}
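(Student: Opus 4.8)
The plan is to reduce both implications to the finite-dimensional case, where the analysis is completely controlled. The single analytic ingredient I will rely on is the classical fact that a finite-dimensional Hausdorff topological vector space over $\R$ is topologically isomorphic to some Euclidean space $\R^d$ with its usual topology; in particular, for a finite $B$, the subspace $\rhull{B}$, endowed with the topology inherited from the ambient space, is topologically isomorphic to $\R^{\dim\rhull{B}}$. Two elementary bookkeeping facts will be used repeatedly: first, topological independence is inherited by subsets, directly from Definition \ref{def:topological:independence} (the same $W$-witness works for a smaller index set, and $0$ stays out of the subset); and second, by Remark \ref{lemma:obvious} a set is topologically independent in a group precisely when it is topologically independent in the subgroup it generates. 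Applying this remark with the ambient group taken first as the whole space and then as a finite-dimensional subspace lets me transfer topological independence freely between $E$, $\hull{B}$ and $\rhull{B}$. Finally, topological independence is plainly preserved by topological isomorphisms, since the defining condition refers only to neighbourhoods, the group operation and integer multiples.

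For part (i) I argue by contradiction. Suppose $A$ is topologically independent but some finite $B\subseteq A$ is linearly dependent, so that $d:=\dim\rhull{B}<|B|$. Being a subset of the topologically independent set $A$, the set $B$ is topologically independent in $E$, hence in $\hull{B}$, hence in $\rhull{B}$ after two applications of Remark \ref{lemma:obvious}. Transporting $B$ along a topological isomorphism $\rhull{B}\cong\R^{d}$ produces a topologically independent subset of $\R^{d}$ of cardinality $|B|$, so Lemma \ref{lemma:step:to:Lie} forces $|B|\le d$, contradicting $d<|B|$. Hence no finite subset of $A$ is linearly dependent, i.e.\ $A$ is linearly independent.

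For part (ii), let $A=\{a_1,\dots,a_k\}$ be finite and linearly independent and set $V=\rhull{A}$, a $k$-dimensional Hausdorff topological vector space, hence topologically isomorphic to $\R^{k}$ with $A$ corresponding to a basis. By Remark \ref{lemma:obvious} it suffices to show that $A$ is topologically independent in $V$. For each $i$ let $p_i\colon V\to V$ be the linear projection onto $\rhull{a_i}$ along the remaining basis vectors, so that $p_i\!\left(\sum_j z_j a_j\right)=z_i a_i$ for integers $z_j$. Since $V$ is finite-dimensional, each $p_i$ is continuous. Given a neighbourhood $W$ of $0$ in $V$, continuity of the finitely many maps $p_i$ at $0$ yields a neighbourhood $U$ of $0$ in $V$ with $p_i(U)\subseteq W$ for all $i$; this $U$ is a $W$-witness, because for any finite $F\subseteq A$ and integers $\{z_a:a\in F\}$ with $\sum_{a\in F}z_a a\in U$ one gets $z_{a_i}a_i=p_i\!\left(\sum_{a\in F}z_a a\right)\in W$ whenever $a_i\in F$. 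Thus $A$ is topologically independent.

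The only genuinely non-routine step, and the place where the hypothesis of a vector space (rather than an abstract topological group) is essential, is the passage to the finite-dimensional subspace: one must know that $\rhull{B}$ (respectively $\rhull{A}$), with the inherited topology, is topologically Euclidean, which is exactly what makes Lemma \ref{lemma:step:to:Lie} applicable in (i) and guarantees the automatic continuity of the coordinate projections in (ii). Everything else — inheritance of topological independence by subsets, its transfer through Remark \ref{lemma:obvious}, and the routine manipulation of the witnesses and projections — is straightforward.
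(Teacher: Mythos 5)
Your proof is correct and takes essentially the same route as the paper: part (i) is the paper's argument (reduce to a finite subset $B$, identify $\rhull{B}$ with Euclidean $\R^d$ via the uniqueness of the finite-dimensional Hausdorff vector topology, and invoke Lemma \ref{lemma:step:to:Lie}), merely phrased contrapositively. In part (ii) the paper cites Lemma \ref{topologcally:independent:in:direct:sum} together with Remark \ref{lemma:obvious} after identifying $\rhull{A}$ with $\R^{|A|}$, whereas you re-derive that lemma's content by hand through continuity of the coordinate projections $p_i$; this is only a cosmetic difference, since your witness $U$ is exactly the basic product neighbourhood used in the lemma's proof.
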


\begin{proof}
(i)
It is enough to prove that every non-empty finite subset $B$ of $A$ is linearly independent. Let $B\subseteq A$ be 
finite and non-empty. Then $\rhull{B}=\R^n$ is an $n$-dimensional Euclidean vector space for a suitable $n\in\N$. Since $B$ generates $\R^n$, one has $n\le |B|$.
Being a subset of a topologically independent set $A$, $B$ itself is topologically independent.  By Lemma \ref{lemma:step:to:Lie}, the converse inequality $|B|\le n$ also holds. Therefore, $|B|=n$. Since $\rhull{B}=\R^n$, $B$ must be the basis for $\R^n$. In particular, $B$ is linearly independent.

(ii)
It is a well-known and simple fact that  $\rhull{A}=\bigoplus_{a\in A}\rhull{a}=\R^{|A|}$. Consequently, the set $A$ is topologically independent by 
Lemma \ref{topologcally:independent:in:direct:sum} and Remark \ref{lemma:obvious}. 
\end{proof}

\begin{remark}
\label{remark:4.12}
In \cite{ES}, the following stronger version of a linear independence of a countably infinite subset $A=\{a_i:i\in\N\}$ of a topological vector space is introduced:

\begin{equation}\label{eq:dalsi:rovnice}
\mbox{If } \sum_{i=0}^\infty r_ia_i=0 \mbox{ for some real sequence } \{r_i:i\in\N\} \mbox{, then } r_i=0 \mbox{ for all } i\in\N.
\end{equation}

This notion is studied under the names $\omega$-independent in \cite{Lip},  $\omega$-linearly independent in \cite{Singer} and linearly topologically independent in \cite{LL,Lip1}.
\end{remark}

\begin{remark}
If in Definition \ref{def:topological:independence} we assume $G$ to be a topological vector space and  replace the set of integers $\{z_a:a\in F\}$ by the set $\{r_a:a\in F\}$ of real numbers, then we obtain another generalization of linear independence for topological vector spaces which is stronger than the topological independence from Definition \ref{def:topological:independence}.
It is an interesting question how these two notions are related. However, this question is beyond the scope of this paper.
\end{remark}

\section{Direct sums in topological groups}

\begin{theorem}\label{Kalton:top:iso}
For a subset $A$ of a topological group $G$, the following conditions are equivalent:
\begin{itemize}
 \item[(i)] $A$ is both topologically independent and absolutely Cauchy summable in $G$;
 \item[(ii)] the Kalton map $\kal{A}$ is a topologically isomorphic embedding.
\end{itemize}
\end{theorem}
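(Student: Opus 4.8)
The plan is to read off this theorem as a synthesis of the three results already established, by decomposing the statement ``$\kal{A}$ is a topologically isomorphic embedding'' into its three constituent demands: that $\kal{A}$ be (a) an algebraic monomorphism, (b) continuous, and (c) an open map onto its image $\hull{A}$. Each of the two hypotheses in (i) will be responsible for a subset of these demands, with topological independence covering (a) and (c) and absolute Cauchy summability covering (b). Before starting I would note that both conditions (i) and (ii) silently presuppose $0\notin A$ (topological independence is defined only for sets of non-zero elements, and $\kal{A}$ is defined only for $A\subseteq G\setminus\{0\}$), so that the earlier propositions, which all carry this hypothesis, apply verbatim.

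For the implication (i)$\to$(ii) I would argue as follows. Since $A$ is absolutely Cauchy summable, Theorem \ref{Proposition:Udine} gives that $\kal{A}$ is continuous, securing (b). Since $A$ is topologically independent, Lemma \ref{lemma:top:ind:is:ind} shows it is independent, so by the definition of independence the Kalton map $\kal{A}$ is a monomorphism, securing (a); simultaneously Proposition \ref{Proposition:Matsuyama}~(i) gives that $\kal{A}$ is an open map onto $\hull{A}$, securing (c). A continuous, injective homomorphism which is open onto its image is precisely a topological isomorphism between $S_A$ and $\kal{A}(S_A)=\hull{A}$, i.e.\ a topologically isomorphic embedding, which is (ii).

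For the reverse implication (ii)$\to$(i) the two hypotheses are recovered separately. Topological independence of $A$ is exactly the content of Proposition \ref{Proposition:Matsuyama}~(ii). For absolute Cauchy summability, observe that a topologically isomorphic embedding is in particular continuous, so $\kal{A}$ is continuous, and then Theorem \ref{Proposition:Udine} yields that $A$ is absolutely Cauchy summable in $G$. This completes the equivalence. I do not expect any genuine obstacle here: the theorem is a clean assembly of the preceding machinery, and the only point worth flagging explicitly is the clean division of labour between the two hypotheses, namely that topological independence alone delivers both openness and injectivity (via Proposition \ref{Proposition:Matsuyama}~(i) and Lemma \ref{lemma:top:ind:is:ind}), while absolute Cauchy summability is solely responsible for continuity (via Theorem \ref{Proposition:Udine}). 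This separation is what makes the two conditions in (i) logically independent of one another, as already illustrated in the remarks preceding the statement.
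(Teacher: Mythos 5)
Your proposal is correct and takes essentially the same route as the paper's own proof: both directions assemble the same three ingredients in the same way, namely Lemma~\ref{lemma:top:ind:is:ind} for injectivity, Proposition~\ref{Proposition:Matsuyama} for openness onto $\hull{A}$ (and, conversely, for recovering topological independence), and Theorem~\ref{Proposition:Udine} for continuity (and, conversely, for recovering absolute Cauchy summability). Your explicit remark that $0\notin A$ is silently presupposed by both conditions is a sound minor clarification that the paper leaves implicit.
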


\begin{proof}
(i)$\to$(ii)
Since $A$ is topologically independent, it is independent by Lemma \ref{lemma:top:ind:is:ind}, and so the Kalton map $\kal{A}:S_A\to G$ is a monomorphism. By Proposition \ref{Proposition:Matsuyama}~(i), $\kal{A}$ is an open map onto its image $\kal{A}(S_A)=\hull{A}$. Since $A$ is absolutely Cauchy summable in $G$, the map $\kal{A}$
is continuous by Theorem \ref{Proposition:Udine}. This finishes the proof of (ii).

(ii)$\to$(i) Proposition \ref{Proposition:Matsuyama}~(ii) implies that $A$ is topologically independent in $G$, while Theorem \ref{Proposition:Udine} yields that $A$ is absolutely Cauchy summable.
\end{proof}

From this theorem and Fact \ref{fact}, we obtain the following corollary. 
\begin{corollary}
\label{sum:corollary}
For a topological group $G$ and a cardinal $\kappa$, the following conditions are equivalent:
\begin{itemize}
 \item[(i)] $G$ contains a subgroup topologically isomorphic to a direct sum of $\kappa$-many non-trivial groups;
 \item[(ii)] $G$ contains a topologically independent absolutely Cauchy summable set of size $\kappa$.
\end{itemize}
\end{corollary}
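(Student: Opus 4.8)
The plan is to obtain this corollary by directly chaining together the two main results already established: Fact \ref{fact}, which characterizes when $G$ contains a direct sum of $\kappa$-many non-trivial groups in terms of the existence of a set $A$ rendering the Kalton map $\kal{A}$ a topologically isomorphic embedding, and Theorem \ref{Kalton:top:iso}, which characterizes exactly when $\kal{A}$ is such an embedding. Since both results are phrased around the same object---a subset $A$ with $|A|=\kappa$ together with its associated Kalton map---the corollary should follow with essentially no new content, by simply composing the two equivalences.

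For the implication (i)$\to$(ii), I would start from the assumption that $G$ contains a subgroup topologically isomorphic to a direct sum of $\kappa$-many non-trivial groups. By Fact \ref{fact}, this yields a set $A\subseteq G\setminus\{0\}$ with $|A|=\kappa$ for which $\kal{A}$ is a topologically isomorphic embedding. Feeding this into Theorem \ref{Kalton:top:iso} immediately gives that $A$ is both topologically independent and absolutely Cauchy summable, so $A$ is the required set of size $\kappa$.

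For (ii)$\to$(i), I would run the same chain in reverse. Given a topologically independent, absolutely Cauchy summable set $A$ with $|A|=\kappa$, Theorem \ref{Kalton:top:iso} shows that $\kal{A}$ is a topologically isomorphic embedding, and then Fact \ref{fact} delivers the desired direct sum of $\kappa$-many non-trivial groups inside $G$. The only small bookkeeping point worth checking is that condition (ii) is compatible with the hypothesis $A\subseteq G\setminus\{0\}$ used in Fact \ref{fact}; this is automatic, since by Definition \ref{def:topological:independence} every topologically independent set consists of non-zero elements, so the constraint $0\notin A$ is built into the notion. I expect no genuine obstacle here: the entire difficulty of the corollary has already been absorbed into the proofs of Fact \ref{fact} and Theorem \ref{Kalton:top:iso}, and what remains is purely the formal composition of the two characterizations.
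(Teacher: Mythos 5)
Your proposal is correct and matches the paper exactly: the paper gives no separate argument, simply deriving the corollary by combining Theorem \ref{Kalton:top:iso} with Fact \ref{fact}, which is precisely your chain of equivalences. Your added bookkeeping remark that $0\notin A$ is automatic from Definition \ref{def:topological:independence} is a valid (and harmless) extra check.
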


\begin{remark}\label{remark:Zp} 
Let $\Z_p$ denote the (compact metric) group of $p$-adic integers. It is known that $\Z_p$ is a linear group; that is, it has a basis at $0$ consisting of its clopen subgroups; see, for instance,  \cite{DPS}.

(i) It is shown in \cite{DSS_arxiv} that {\em every infinite null sequence $A$ in $\Z_p$ is absolutely Cauchy summable in $\Z_p$\/}.

(ii) It is known that {\em $\Z_p$ does not contain any sum of two non-trivial topological groups\/}.

(iii) It is a simple fact that {\em $\Z_p$ contains an infinite independent null sequence\/}.

(iv) It follows from (i), (ii) and (iii) that one cannot replace ``topologically independent'' in item (ii) in Corollary \ref{sum:corollary} by the weaker condition ``independent'', even when $G$ is a compact metric linear group.

(v) Let $A$ be an infinite null sequence in $\Z_p$.
It follows from (i), (ii) and Corollary \ref{sum:corollary} that $A$ is an absolutely Cauchy summable set such that the only topologically independent subsets of $A$ are singletons.

(vi)
{\em There exists an infinite absolutely Cauchy summable subset $B$ of a compact metric group such that 
the Kalton map $\kal{B}$ is continuous but not open.} Indeed, let $B$ be an infinite independent sequence in $\Z_p$
as in
(iii). By (i), $B$ is absolutely Cauchy summable, so $\kal{B}$ is continuous by Theorem \ref{Proposition:Udine}. Since $B$ is independent, $\kal{B}$ is injective. If it were also open, it would be a topologically isomorphic embedding in contradiction with 
item
(ii).
\end{remark}

Item 
(v) of the above remark shows that an infinite absolutely Cauchy summable set in a compact metric linear group need not contain any infinite topologically independent subset. Our next theorem shows that, under an additional condition imposed on an
infinite absolutely Cauchy summable set, it does contain an infinite topologically independent subset.

\begin{theorem}\label{Gn:discrete:gives:injective:heomeo}
Let $A$ be an infinite absolutely Cauchy summable set in a topological group $G$ such that $\hull{a}$
 is discrete for every $a\in A$. Then $A$ contains an infinite topologically independent subset.
\end{theorem}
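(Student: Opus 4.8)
The plan is to build the desired subset one element at a time as $B=\{b_k:k\in\N\}$, maintaining two running invariants that will together force topological independence of $B$. First I would pass to a countable subset of $A$: any subset of an absolutely Cauchy summable set is again absolutely Cauchy summable (intersect the finite set $F$ of Definition \ref{def:abs:summ:set} with the subset), and absolute Cauchy summability also forces $a\to 0$, so without loss of generality $A=\{a_j:j\in\N\}$ with $0\notin A$. The invariants I maintain at stage $m$, writing $H_m=\{b_0,\dots,b_{m-1}\}$, are: (a) $H_m$ is independent and $\hull{H_m}$ is discrete, witnessed by a symmetric open $U^*_m$ with $U^*_m\cap\hull{H_m}=\{0\}$; and (b) a tail-smallness condition $\hull{\{b_k:k\ge m\}}\subseteq\delta_m$, where $\delta_m$ is symmetric with $\delta_m+\delta_m\subseteq U^*_m$.

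The engine of the construction is the extension step, which I initially expected to be the main obstacle: given $H_m$ with $\hull{H_m}$ discrete, I must choose the next element $b_m$ so that $\hull{H_{m}\cup\{b_m\}}$ is still discrete and $H_m\cup\{b_m\}$ independent. This looks delicate because a small element can have large multiples that drift back toward $\hull{H_m}$ and destroy discreteness (as $\hull{1,\sqrt2}$ does in $\R$). The resolution is that absolute Cauchy summability controls not merely each $b_m$ but the entire cyclic group it generates: choosing a symmetric $V'$ with $V'+V'\subseteq U^*_m$, Definition \ref{def:abs:summ:set} yields a finite $F\subseteq A$ with $\hull{A\setminus F}\subseteq V'\cap\delta_m$, and then any candidate $b_m\in A\setminus F$ satisfies $\hull{b_m}\subseteq\hull{A\setminus F}\subseteq V'$. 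With $\hull{b_m}$ trapped inside $V'$ the verification is a one-line neighbourhood computation: picking a symmetric $W_{b_m}$ with $W_{b_m}\cap\hull{b_m}=\{0\}$ (discreteness of $\hull{b_m}$), for any $x=h+zb_m\in V'\cap W_{b_m}$ with $h\in\hull{H_m}$ one gets $h=x-zb_m\in V'+V'\subseteq U^*_m$, so $h=0$, and then $x=zb_m\in W_{b_m}\cap\hull{b_m}=\{0\}$; thus $V'\cap W_{b_m}$ witnesses discreteness of $\hull{H_m\cup\{b_m\}}$, while $\hull{b_m}\cap\hull{H_m}\subseteq V'\cap\hull{H_m}=\{0\}$ gives independence. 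So every sufficiently large-indexed candidate is admissible and the apparent obstacle evaporates.

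With admissibility automatic, I would run the induction so as to secure invariant (b): at stage $m$, after fixing $U^*_m$ and a symmetric $\delta_m$ with $\delta_m+\delta_m\subseteq U^*_m$, take the finite $F_m$ from absolute Cauchy summability with $\hull{A\setminus F_m}\subseteq V'\cap\delta_m$, pick $b_m\in A\setminus F_m$ avoiding all previously used finite exclusion sets, and keep every later $b_k$ ($k>m$) in $A\setminus F_m$ as well; then $\hull{\{b_k:k\ge m\}}\subseteq\hull{A\setminus F_m}\subseteq\delta_m$, as required.

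Finally I would verify that $B$ is topologically independent directly from Definition \ref{def:topological:independence}, and here the discreteness of the initial segments does the essential work of handling an \emph{arbitrary} neighbourhood $W$ (important since $G$ need not be metrizable). Given $W$, absolute Cauchy summability of $B$ provides an $m$ with $R_m:=\hull{\{b_k:k\ge m\}}\subseteq W$, and I claim $U:=\delta_m$ is a $W$-witness. Indeed, for a finite $E\subseteq B$ and integers $z_b$ with $\sum_{b\in E}z_bb\in\delta_m$, split the sum into its head ($b_i$ with $i<m$) part $s$ and tail ($b_i$ with $i\ge m$) part $t$. Each tail term lies in $\hull{b_i}\subseteq R_m\subseteq W$; and $s=\bigl(\sum_{b\in E}z_bb\bigr)-t\in\delta_m+\delta_m\subseteq U^*_m$ (using $t\in R_m\subseteq\delta_m$ and symmetry of $\delta_m$), so $s\in U^*_m\cap\hull{H_m}=\{0\}$, whence independence of $H_m$ forces every head term $z_ib_i=0\in W$. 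Thus all terms lie in $W$, $B$ is topologically independent, and being infinite it is the required subset. The one genuinely substantive point—worth flagging as the conceptual crux—is the extension step, where \emph{cyclic} smallness coming from $\hull{A\setminus F}\subseteq V'$, rather than mere nullity of the $b_k$, is exactly what preserves discreteness of the generated subgroup.
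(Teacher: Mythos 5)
Your proof is correct, and while it shares the overall architecture of the paper's proof, its engine is genuinely different. In both arguments one inductively selects $b_m$ from the complement of a finite set $F_m$ supplied by absolute Cauchy summability, with symmetric witness neighbourhoods, and in the final verification the given neighbourhood $W$ determines a stage $m$ whose neighbourhood serves as the $W$-witness. But the invariants differ in substance. The paper records only a per-element condition: a symmetric $U_n$ with $(U_n+U_n)\cap\hull{a_n}=\{0\}$ for the single new element, and in the verification it shows by a minimal-index contradiction argument that any combination with all terms nonzero lying in $U_n$ must have support \emph{disjoint} from $F_n$ --- so for the paper there is no ``head part'' at all. You instead maintain the strictly stronger invariant that the initial segment $H_m$ is independent and $\hull{H_m}$ is discrete, proved via your extension lemma (if $\hull{H}$ is discrete with witness $U^*$ and the \emph{entire cyclic group} $\hull{b}$ sits inside $V'$ with $V'+V'\subseteq U^*$, then $\hull{H\cup\{b\}}$ is discrete); your final verification is then a direct head/tail computation in which the head sum is annihilated by discreteness of $\hull{H_m}$ and independence, with no contradiction argument needed. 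What the paper's route buys is economy: it never needs independence bookkeeping or discreteness of the finitely generated segments. What your route buys is extra structure: you show along the way that every finite initial segment of $B$ generates a discrete subgroup and is independent, which by Proposition \ref{direct:sum:top:indep} means each $H_m$ already embeds as a topological direct sum; and your identification of cyclic smallness $\hull{A\setminus F}\subseteq V'$ --- rather than mere smallness of the elements themselves --- as the crux is exactly the point the paper exploits through its condition (iii$_n$). One detail you left implicit is faithfulness of the enumeration (that $b_m$ differs from $b_0,\dots,b_{m-1}$), but your construction repairs this automatically: a candidate $b_m\in A\setminus F_m$ satisfies $b_m\in\hull{b_m}\subseteq V'\subseteq U^*_m$, so if $b_m$ coincided with some earlier $b_j\in\hull{H_m}$ it would lie in $U^*_m\cap\hull{H_m}=\{0\}$, contradicting $0\notin A$; hence each chosen element is automatically new.
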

\begin{proof}
  We will build the topologically independent faithfully indexed subset $B=\{a_{n}:n\in\N\}$ of $A$ by induction on $n\in\N$.  At each step we choose an element $a_n\in A$, a finite set $F_n\subseteq A$ and an open symmetric neighbourhood $U_n$ of $0$ satisfying the conditions (i$_n$)--(iv$_n$) listed below.
First, we define $F_{-1}=\emptyset$ and $U_{-1}=G$. 

\begin{itemize}
\item[(i$_n$)] $F_{n-1}\subseteq F_n$ and $a_n\in F_n\setminus F_{n-1}$,
\item[(ii$_n$)] $(U_n+U_n)\cap \hull{a_{n}}=\{0\}$,
\item[(iii$_n$)] $\hull{A\setminus F_{n}}\subseteq U_{n}$,
\item[(iv$_n$)] $U_n\subseteq U_{n-1}$.
\end{itemize}

Suppose that for $n\in\N$ a finite set $F_{n-1}\subseteq A$ and an  open symmetric neighbourhood $U_{n-1}$  of $0$ have already been selected. Let us define $a_{n}\in A$, a finite set $F_{n}\subseteq A$ and an open symmetric neighbourhood $U_{n}$ of $0$ satisfying conditions (i$_{n}$)--(iv$_{n}$). 

Since $A$ is infinite and $F_{n-1}$ is finite, we can choose $a_n\in A\setminus F_{n-1}$. Since $\hull{a_{n}}$ is discrete by our assumption, we can fix a symmetric neighbourhood $U_{n}$ of $0$ satisfying (ii$_{n}$). By choosing a smaller $U_n$ if necessary, we may also assume that it satisfies the condition (iv$_{n}$) as well. Since 
$A$ is absolutely Cauchy summable, there exists a finite set $E$ such that $\hull{A\setminus E}\subseteq U_{n}$. Clearly, $F_n=\{a_n\}\cup F_{n-1}\cup E$ is a finite subset of $A$ satisfying 
(i$_n$). Since $E\subseteq F_n$, we have $\hull{A\setminus F_{n}}\subseteq\hull{A\setminus E}\subseteq U_{n}$; that is,
(iii$_n$) holds as well. This finishes our inductive construction.

Since (i$_n$) holds for every $n\in \N$, we conclude that 
\begin{equation}
\label{eq:9}
B=\{a_n:n\in\N\}\subseteq \bigcup_{n\in\N}F_n\subseteq A
\end{equation}
and $a_n\not=a_m$ for $n,m\in \N$ and $m\not=n$.
In particular, $B$ is infinite.

Let us show that  $B$ is topologically independent.
Fix a neighbourhood $W$ of $0$. Since $A$ is absolutely Cauchy summable, so is its subset $B$. Therefore,
 $\hull{B\setminus S}\subseteq W$ for some finite set $S\subseteq B$. Since $\{F_n:n\in\N\}$ is an increasing sequence of sets by (i$_n$), \eqref{eq:9} allows us to find an $n\in\N$ such that $S\subseteq F_n$. Now
\begin{equation}\label{eq:B:minus:Fn:in:W}
\hull{B\setminus F_{n}}\subseteq \hull{B\setminus S}\subseteq W.
\end{equation}

We claim that $U_n$ is a $W$-witness of the topological independence of $B$. Indeed, take a finite set $F\subseteq B$ and an indexed set $\{z_a:a\in F\}$ of integers such that 
\begin{equation}\label{eq:c:in:Un}
c=\sum_{a\in F}z_aa\in U_n.
\end{equation}
Without loss of generality, we may assume that $z_aa\neq 0$ for all $a\in F$.

We are going to show that $z_aa\in W$ for each $a\in F$.
To achieve this, it suffices to check that $F\cap F_n=\emptyset$.
Indeed, assuming that this has already been proved, 
for every $a\in F$
we would have $a\in F\setminus F_n\subseteq B\setminus F_n$, so
$z_aa\in \hull{B\setminus F_{n}}\subseteq W$ by \eqref{eq:B:minus:Fn:in:W}. 

Therefore, we shall assume that  $F\cap F_n\not=\emptyset$ and derive a contradiction from it. Let $m\in\N$ be the minimal element with the property that $F\cap F_m\not=\emptyset$.
Since $F\cap F_n\not=\emptyset$ by our assumption, $m\le n$. Since  $F\subseteq B=\{a_n:n\in\N\}$ and 
(i$_k$) holds for every $k\in\N$, from the minimality of $m$ one concludes that $F\cap F_m=\{a_m\}$. Therefore, 
\begin{equation}
\label{new:eq}
c-z_{a_m} a_m\in\hull{F\setminus F_m}\subseteq \hull{A\setminus F_m}\subseteq U_m
\end{equation}
by \eqref{eq:c:in:Un} and (iii$_m$). 
Since (iv$_k$) holds for every $k\in\N$ and $m\le n$, it follows that  $U_n\subseteq U_m$. Combining this with \eqref{eq:c:in:Un}, we get 
$c\in U_m$. Since $U_m$ is symmetric, this and \eqref{new:eq} yield $z_{a_m} a_m\in U_m+U_m$. Recalling (ii$_m$), we get 
$z_{a_m} a_m=0$. On the other hand, since $a_m\in F$, we have $z_{a_m} a_m\not=0$ by our assumption.
This contradiction finishes the proof of the equality $F\cap F_n=\emptyset$. 
\end{proof}

\begin{corollary}
\label{precise:corollary}
Let $A$ be an infinite absolutely Cauchy summable set in a topological group $G$ such that $\hull{a}$
 is discrete for every $a\in A$. Then
$G$ contains a subgroup (topologically isomorphic to) $S_B$ for some infinite subset $B$ of $A$.
\end{corollary}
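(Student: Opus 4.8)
The plan is to deduce Corollary \ref{precise:corollary} directly by chaining together Theorem \ref{Gn:discrete:gives:injective:heomeo} with the main structural result of this section, namely Theorem \ref{Kalton:top:iso}. The hypotheses are exactly those of Theorem \ref{Gn:discrete:gives:injective:heomeo}: $A$ is an infinite absolutely Cauchy summable set in $G$ with $\hull{a}$ discrete for every $a\in A$. So the first step is simply to invoke that theorem to extract an infinite topologically independent subset $B\subseteq A$.

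The second and crucial step is to observe that this $B$ is \emph{also} absolutely Cauchy summable in $G$, so that both hypotheses of Theorem \ref{Kalton:top:iso}\,(i) are met. This is where I expect the only real content to lie, and fortunately it is routine given the earlier results: absolute Cauchy summability passes to subsets. Indeed, if $U$ is a neighbourhood of $0$ and $F\subseteq A$ is finite with $\hull{A\setminus F}\subseteq U$, then $B\setminus (F\cap B)\subseteq A\setminus F$ gives $\hull{B\setminus (F\cap B)}\subseteq\hull{A\setminus F}\subseteq U$, and $F\cap B$ is a finite subset of $B$; hence $B$ satisfies Definition \ref{def:abs:summ:set}. (This hereditary property is in fact already used implicitly inside the proof of Theorem \ref{Gn:discrete:gives:injective:heomeo}, where ``$A$ is absolutely Cauchy summable, so is its subset $B$'' appears.)

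Having verified that $B$ is both topologically independent and absolutely Cauchy summable in $G$, the final step is to apply the implication (i)$\to$(ii) of Theorem \ref{Kalton:top:iso}: the Kalton map $\kal{B}:S_B\to G$ is a topologically isomorphic embedding. Its image $\kal{B}(S_B)=\hull{B}$ is therefore a subgroup of $G$ topologically isomorphic to $S_B$, and since $B$ is infinite, $S_B=\bigoplus_{b\in B}\hull{b}$ is an infinite direct sum. This yields precisely the claimed subgroup and completes the argument.

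I do not anticipate any genuine obstacle here; the corollary is a clean synthesis of two already-proved theorems, with the single auxiliary observation that absolute Cauchy summability is inherited by subsets. The only point requiring a word of care is making explicit that ``$G$ contains a subgroup topologically isomorphic to $S_B$'' is exactly what a topologically isomorphic embedding of $S_B$ delivers, via $\kal{B}(S_B)=\hull{B}$; everything else is direct citation.
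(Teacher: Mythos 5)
Your proposal is correct and coincides with the paper's own proof: it too extracts an infinite topologically independent subset $B\subseteq A$ via Theorem \ref{Gn:discrete:gives:injective:heomeo}, notes that absolute Cauchy summability is inherited by $B$, and concludes by applying Theorem \ref{Kalton:top:iso}. Your explicit verification of the hereditary step is a minor elaboration of what the paper states in one line, not a different route.
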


\begin{proof}
By Theorem \ref{Gn:discrete:gives:injective:heomeo}, $A$ contains an infinite topologically independent subset $B$. Since $A$ is absolutely Cauchy summable, 
so is its subset $B$. By Theorem \ref{Kalton:top:iso}, the Kalton map  $\kal{B}: S_B\to G$ is a topologically isomorphic embedding.
\end{proof}

\begin{corollary}\label{cor:thm:basic}
Let $G$ be a topological group such that each of its cyclic subgroups is discrete. Then the following statements are equivalent:
\begin{itemize}
\item[(i)]
$G$ contains an infinite absolutely Cauchy summable set;
\item[(ii)] $G$ contains a subgroup (topologically isomorphic to) $S_A$ for some infinite subset $A$ of $G\setminus\{0\}$.
\end{itemize}
\end{corollary}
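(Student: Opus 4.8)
The plan is to prove Corollary \ref{cor:thm:basic} by assembling the machinery already developed, since the hard conceptual work lives in Theorem \ref{Gn:discrete:gives:injective:heomeo} and Corollary \ref{precise:corollary}. The statement is an equivalence, so I would prove the two implications separately, and I expect the forward direction (i)$\to$(ii) to follow almost immediately from the preceding results, while the reverse direction (ii)$\to$(i) will require producing an absolutely Cauchy summable set from the given copy of $S_A$.

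For the implication (i)$\to$(ii), suppose $G$ contains an infinite absolutely Cauchy summable set $A_0\subseteq G$. The hypothesis on $G$ guarantees that $\hull{a}$ is discrete for every $a\in A_0$, so the hypotheses of Corollary \ref{precise:corollary} are satisfied by $A_0$. Applying Corollary \ref{precise:corollary} directly, we obtain an infinite subset $B\subseteq A_0\subseteq G\setminus\{0\}$ such that $G$ contains a subgroup topologically isomorphic to $S_B$. This is exactly statement (ii) with $A=B$, so this direction is essentially a one-line invocation of the earlier corollary.

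For the implication (ii)$\to$(i), assume $G$ contains a subgroup topologically isomorphic to $S_A=\bigoplus_{a\in A}\hull{a}$ for some infinite $A\subseteq G\setminus\{0\}$. I would fix a topologically isomorphic embedding whose image is this subgroup, and then exhibit an infinite absolutely Cauchy summable set inside $S_A$ (hence inside $G$ via the embedding, using that topological isomorphisms preserve absolute Cauchy summability). The natural candidate is the image of a transversal of non-zero elements: pick $x_a\in\hull{a}\setminus\{0\}$ for each $a\in A$ and set $X=\{x_a:a\in A\}$. By Lemma \ref{Cauchy:summable:in:direct:sum}, applied to the family $\{H_a\}=\{\hull{a}\}$, the set $X$ is absolutely Cauchy summable in $S_A$. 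Since $X$ is infinite (as $A$ is infinite and the $x_a$ are distinct, lying in distinct summands) and absolute Cauchy summability is preserved under the topological isomorphism $S_A\to G$, the image of $X$ is an infinite absolutely Cauchy summable set in $G$, establishing (i).

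The only point requiring a moment's care — and what I would flag as the main (minor) obstacle — is verifying that absolute Cauchy summability transfers across the topological isomorphism in the (ii)$\to$(i) direction. This follows from the definition together with Remark \ref{remark:on:abs:C:summability}~(ii): absolute Cauchy summability of a set is an intrinsic property of the subgroup it generates, and a topological isomorphism between $S_A$ and its image in $G$ carries neighbourhoods of $0$ to neighbourhoods of $0$ and the generated subgroups to each other, so the defining condition is preserved verbatim. No role is actually played in this direction by the hypothesis that cyclic subgroups of $G$ are discrete; that hypothesis is only needed in (i)$\to$(ii) to trigger Corollary \ref{precise:corollary}.
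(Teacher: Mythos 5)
Your proposal is correct. The forward implication is verbatim the paper's: a one-line appeal to Corollary \ref{precise:corollary}, and, as you note, the discreteness hypothesis is used only there. For (ii)$\to$(i) you take a slightly different route from the paper, which simply cites the implication (ii)$\to$(i) of Theorem \ref{Kalton:top:iso}: in that theorem a topologically isomorphic embedding $\kal{A}$ is in particular continuous, so absolute Cauchy summability follows from Theorem \ref{Proposition:Udine}, i.e., the paper routes the argument through the Kalton-map/modified-topology machinery. You instead take the transversal $X=\{x_a:a\in A\}$ of non-zero elements inside $S_A$, apply Lemma \ref{Cauchy:summable:in:direct:sum} to see that $X$ is absolutely Cauchy summable in $S_A$, and push $X$ forward along the given topological isomorphism, using Remark \ref{remark:on:abs:C:summability}~(ii) to justify that absolute Cauchy summability is intrinsic to $\hull{X}$ with its subgroup topology and hence transfers. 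This is more elementary (no appeal to Theorem \ref{Proposition:Udine} or to the $A$-modification apparatus) and, as a bonus, it cleanly handles a point the paper's terse citation glosses over: statement (ii) only asserts that \emph{some} subgroup of $G$ is topologically isomorphic to $S_A$, not that the Kalton map $\kal{A}$ itself is the witnessing embedding, so to apply Theorem \ref{Kalton:top:iso} literally one should first replace $A$ by the image of the canonical generators of the summands --- which is exactly the set your argument constructs. Both routes are equally short; the paper's buys uniformity with the rest of the section, yours buys self-containment and a correct treatment of the quantifier over isomorphisms.
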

\begin{proof}
The implication (i)~$\to$~(ii) follows from Corollary \ref{precise:corollary}.
The reverse implication (ii)~$\to$~(i) follows from the implication (ii)~$\to$~(i) of Theorem \ref{Kalton:top:iso}. 
\end{proof}

Remark \ref{remark:Zp} shows that the condition ``all cyclic subgroups are discrete'' cannot be weakened to ``all cyclic subgroups have linear topology'' in 
this corollary, as well as in Theorem \ref{Gn:discrete:gives:injective:heomeo} and Corollary \ref{precise:corollary}. 

\section{Absolutely summable sets versus absolutely Cauchy summable sets}
\label{sec:abs:summable:sets}
\label{sec:KALTON}

\begin{definition}
We say that a subset $A$ of a topological group $G$ is 
{\em absolutely summable} in $G$ provided that, for every family $\{z_a:a\in A\}$ of integers indexed by $A$, there exists $g\in G$ having the following property:
For every neighbourhood $U$ of $0$ one can find a finite set $F\subseteq A$ such that 
\begin{equation}\label{eq:def:sum} g-\sum_{a\in E}z_a a\in U \mbox{ for every finite } 
E\subseteq A \mbox{ containing } F; 
\end{equation}
that is, the indexed set $\{z_aa:a\in A\}$ is summable in the sense of Bourbaki (see \cite[Appendice II, D\'{e}finition 1]{Bourbaki}). In this case we write
\begin{equation}\label{eq:g:is:sum}
g=\sum_{a\in 
A}z_a a.
\end{equation}
\end{definition}

The following lemma provides a typical example of an absolutely summable set. We omit its straightforward proof.

\begin{lemma}
\label{summable:sets:in:direct:product}
Let $\{H_a:a\in A\}$ be a family of topological groups and $H=\prod_{a\in A} H_a$ be its direct product. If $x_a\in H_a$ for every $a\in A$, then the set $X=\{x_a:a\in A\}$ is absolutely summable in $H$.
\end{lemma}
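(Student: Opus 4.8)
The plan is to verify directly that every indexed family of integer multiples of the points $x_a$ is summable in $H=\prod_{a\in A}H_a$, exploiting the fact that summability in a product is coordinatewise and that in each factor the corresponding subfamily is \emph{finitely supported}. Fix an arbitrary family $\{z_a:a\in A\}$ of integers. For each $a\in A$ the point $x_a$ lives in the single coordinate $H_a$, so the candidate limit is the element $g=\{z_ax_a\}_{a\in A}\in\prod_{a\in A}H_a$, whose $a$-th coordinate is $z_ax_a$ and which is a perfectly legitimate element of the full product (no finiteness of support is required for membership in $H$). I would then show that $\{z_ax_a:a\in A\}$ is summable to this $g$ in the sense of \eqref{eq:def:sum}.

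First I would recall that a basic open neighbourhood $U$ of $0$ in the Tychonoff product topology has the form $U=\prod_{a\in A}U_a$, where $U_a$ is a neighbourhood of $0$ in $H_a$ and $U_a=H_a$ for all but finitely many $a$; let $F_0=\{a\in A:U_a\neq H_a\}$ be this finite set of ``constrained'' coordinates. The key observation is that for the element $x_b$ (sitting in coordinate $b$) and any integer $z_b$, the partial sum $\sum_{a\in E}z_ax_a$ has $b$-th coordinate equal to $z_bx_b$ precisely when $b\in E$, and equal to $0$ otherwise. Hence if I simply take $F=F_0$, then for every finite $E\supseteq F_0$ the difference $g-\sum_{a\in E}z_ax_a$ has $b$-th coordinate $z_bx_b-z_bx_b=0$ for every $b\in F_0\subseteq E$, so this difference lies in $\prod_{a\in A}U_a=U$ because it is $0$ in every constrained coordinate and automatically belongs to $H_a=U_a$ in every unconstrained coordinate. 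This establishes \eqref{eq:def:sum} and therefore that $X$ is absolutely summable in $H$, with $\sum_{a\in A}z_ax_a=g$.

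Since the statement is labelled as having a straightforward proof that the authors omit, I do not anticipate a genuine obstacle; the only point requiring a moment's care is the bookkeeping that the candidate limit $g=\{z_ax_a\}_{a\in A}$ is meant as an element of the \emph{product} (so that arbitrary, possibly non-finitely-supported, coordinates are allowed), in contrast to the direct-sum setting of Lemma~\ref{Cauchy:summable:in:direct:sum} where only finite support is permitted. This is exactly the structural difference that makes the product, rather than the sum, the natural home for absolutely summable sets, paralleling the discussion preceding Problem~\ref{when:extended:Kalton:is:an:top:isomorhism}. One can phrase the whole verification as the single remark that summability in a product reduces to summability in each factor, and in the factor $H_b$ the family $\{z_ax_a:a\in A\}$ has all but one term equal to $0$, hence trivially converges to $z_bx_b$.
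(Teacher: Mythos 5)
Your proof is correct: the paper omits the proof as ``straightforward,'' and your direct coordinatewise verification---taking the candidate limit $g=\{z_ax_a\}_{a\in A}\in\prod_{a\in A}H_a$ and checking condition \eqref{eq:def:sum} with $F$ equal to the finite set of constrained coordinates of a basic Tychonoff neighbourhood---is exactly the intended argument. The only point you leave implicit, which is harmless since \eqref{eq:def:sum} is monotone in $U$, is the reduction from an arbitrary neighbourhood of $0$ to a basic one.
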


\begin{lemma}\label{cont:hom:preserves:summ}
Let $G,H$ be topological groups, $A$ an absolutely  summable set in $G$ and $f:G\to H$ a continuous homomorphism. Then $f(A)$ is an absolutely summable set in $H$.
If, moreover, $f$ is injective on $A$, then  for every indexed set $\{z_a:a\in A\}$ of integers we have 
\begin{equation}\label{eq:f(g):is:sum}
f\left(\sum_{a\in A}z_aa\right)=\sum_{a\in A}z_af(a).
\end{equation}

\end{lemma}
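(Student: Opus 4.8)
The plan is to isolate the single analytic fact that a continuous homomorphism carries a Bourbaki-summable family to a summable family with the image sum, and then to deduce both assertions from it by reindexing.

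First I would prove the auxiliary claim that \emph{continuous homomorphisms preserve summability}: if $\{y_a:a\in A\}$ is a family in $G$ summable in the sense of Bourbaki with sum $g$, and $f:G\to H$ is a continuous homomorphism, then $\{f(y_a):a\in A\}$ is summable in $H$ with sum $f(g)$. This is a direct neighbourhood chase. Given a neighbourhood $V$ of $0$ in $H$, continuity provides the neighbourhood $U=f^{-1}(V)$ of $0$ in $G$; summability of $\{y_a\}$ yields a finite $F\subseteq A$ with $g-\sum_{a\in E}y_a\in U$ for every finite $E$ with $F\subseteq E\subseteq A$. Applying $f$ and using that it is a homomorphism gives $f(g)-\sum_{a\in E}f(y_a)=f\big(g-\sum_{a\in E}y_a\big)\in V$ for all such $E$, which is exactly the summability of $\{f(y_a)\}$ to $f(g)$.

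The formula \eqref{eq:f(g):is:sum} then falls out at once. Fix an integer family $\{z_a:a\in A\}$. Since $A$ is absolutely summable, $\{z_aa:a\in A\}$ is summable with sum $g=\sum_{a\in A}z_aa$ as in \eqref{eq:g:is:sum}. Applying the auxiliary claim with $y_a=z_aa$ and using $f(z_aa)=z_af(a)$, the family $\{z_af(a):a\in A\}$ is summable with sum $f(g)$, which is precisely \eqref{eq:f(g):is:sum}; here the injectivity of $f$ on $A$ serves to identify this family, reindexed along the bijection $a\mapsto f(a)$, with the family $\{w_bb:b\in f(A)\}$ of coefficients $w_{f(a)}=z_a$, so no two summands collapse.

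For the first assertion in full generality, fix an integer family $\{w_b:b\in f(A)\}$ indexed by $f(A)$ and choose a section $s:f(A)\to A$ with $f(s(b))=b$. Put $B=s(f(A))$ and set $z_a=w_{f(a)}$ for $a\in B$ and $z_a=0$ otherwise. Absolute summability of $A$ together with the auxiliary claim makes $\{z_af(a):a\in A\}$ summable in $H$ with sum $f\big(\sum_{a\in A}z_aa\big)$. Since $z_af(a)=0$ for $a\notin B$, deleting the zero terms leaves a family indexed by $B$ with the same sum, and reindexing along the bijection $f\restriction_{B}:B\to f(A)$ turns it into $\{w_bb:b\in f(A)\}$; hence $f(A)$ is absolutely summable in $H$.

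I expect the only genuine subtlety to lie in the bookkeeping of the last paragraph, namely the standard but needed observations that a summable family remains summable with the same sum after deletion of zero entries and after a bijective reindexing. The analytic heart, the auxiliary claim, is short; everything else is reduction and reindexing around it.
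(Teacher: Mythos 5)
Your proof is correct and takes essentially the same route as the paper: the analytic core is the identical neighbourhood chase with $U=f^{-1}(V)$, and your section $s:f(A)\to A$ with $B=s(f(A))$ is exactly the paper's opening reduction, which replaces $A$ by a subset $A'\subseteq A$ with $f(A')=f(A)$ on which $f$ is injective. The only difference is that you make explicit the zero-padding, deletion of zero summands, and bijective reindexing that the paper leaves implicit, which is sound bookkeeping rather than a genuine deviation.
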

\begin{proof}
We may assume that $f$ is injective on $A$ (otherwise we can replace $A$ by  $A'\subseteq A$ such that $f(A')=f(A)$ and $f$ is injective on $A'$).
Pick a family $\{z_a:a\in A\}$ of integers arbitrarily. Then there is $g\in G$ such that \eqref{eq:g:is:sum} holds. It suffices to show the equality \eqref{eq:f(g):is:sum} (since $f(A)$ is injective on $A$, we do not distinguish between the summing indexes of $A$ and $f(A)$). Pick a neighbourhood $V$ of $0_H$. Then there is finite $F\subseteq A$ such that \eqref{eq:def:sum} holds for $U=f^{-1}(V)$. Hence $$f(g)-\sum_{a\in E}z_af(a)=f\left(g-\sum_{a\in E}z_aa\right)\in f(U)=f(f^{-1}(V))\subseteq V$$ holds for every finite $E\subseteq A$ containing $F$. This yields \eqref{eq:f(g):is:sum}. Hence $f(A)$ is absolutely summable.
\end{proof}

\begin{proposition}
\label{exchange of quantifiers generalized}
Given a subset $A$ of elements of a topological group $G$, the following statements are equivalent:
\begin{itemize}
\item[(i)] For every neighbourhood $U$ of $0$ and every indexed set $\{z_a:a\in A\}$ of integers, there exists finite $F\subseteq A$ such that 
\begin{equation}\label{exchange:quantifiers}
\sum_{a\in E} z_aa\in U\  \mbox{ for every finite }\ E\subseteq A\setminus F.
\end{equation}

\item[(ii)]
For every neighbourhood $U$ of $0$  there exists finite $F\subseteq A$ such that for every indexed set  $\{z_a:a\in A\}$ of integers, we have (\ref{exchange:quantifiers}).

\item[(iii)]
$A$ is absolutely Cauchy summable.
\end{itemize}
\end{proposition}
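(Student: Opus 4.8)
The plan is to prove the cycle of implications (ii)$\to$(i)$\to$(iii)$\to$(ii), since (i) and (ii) differ only by the order of quantifiers (with (ii) being the stronger ``uniform'' version where $F$ does not depend on the coefficients), while (iii) is the clean structural statement about $\langle A\setminus F\rangle$ from Definition \ref{def:abs:summ:set}. The implication (ii)$\to$(i) is immediate, as it simply weakens the uniform choice of $F$ to a choice depending on the given family $\{z_a:a\in A\}$.

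For (i)$\to$(iii), I would fix a neighbourhood $U$ of $0$ and, using continuity of the group operations, select a symmetric neighbourhood $V$ with $V+V\subseteq U$. Applying (i) to $V$ yields a finite set $F\subseteq A$ such that $\sum_{a\in E}z_aa\in V$ for every finite $E\subseteq A\setminus F$ and every choice of integer coefficients. The goal is to deduce $\langle A\setminus F\rangle\subseteq U$. An arbitrary element of $\langle A\setminus F\rangle$ has the form $\sum_{a\in E}z_aa$ for some finite $E\subseteq A\setminus F$ and integers $z_a$; since (i) gives a single $F$ that works for \emph{all} coefficient families, such an element already lies in $V\subseteq U$. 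Here the subtlety is that (i) as stated fixes the coefficients before choosing $F$, so to cover all of $\langle A\setminus F\rangle$ at once I must extract the uniform-in-coefficients conclusion --- which is exactly why I expect the genuine content to sit in proving (iii)$\to$(ii), the uniform direction.

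For (iii)$\to$(ii), I would again fix $U$ and choose a symmetric $V$ with $V+V\subseteq U$. By absolute Cauchy summability (iii) and Definition \ref{def:abs:summ:set}, there is a finite $F\subseteq A$ with $\langle A\setminus F\rangle\subseteq V\subseteq U$. Now for \emph{any} family $\{z_a:a\in A\}$ of integers and any finite $E\subseteq A\setminus F$, the element $\sum_{a\in E}z_aa$ is by definition a member of the subgroup $\langle A\setminus F\rangle$, hence lies in $U$. This establishes \eqref{exchange:quantifiers} with a single $F$ that is independent of the coefficient family, which is precisely statement (ii).

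The main obstacle is conceptual rather than computational: it is recognizing that the apparent quantifier exchange between (i) and (ii) is harmless because condition (iii) characterizes $F$ purely through the subgroup $\langle A\setminus F\rangle$, which contains \emph{every} finite integer combination of elements of $A\setminus F$ regardless of coefficients. Once one sees that each finite sum $\sum_{a\in E}z_aa$ with $E\subseteq A\setminus F$ is simply an element of $\langle A\setminus F\rangle$, both the passage to the uniform statement (ii) and the reduction to Cauchy summability (iii) become routine. No Hausdorff hypothesis or completeness is needed here, and the symmetric neighbourhood $V$ with $V+V\subseteq U$ is used only to absorb a harmless doubling; in fact one could likely work directly with $U$, but inserting $V$ keeps the inclusions transparent.
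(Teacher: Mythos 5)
Your proposal has a genuine gap at the implication (i)$\to$(iii), and it sits exactly at the sentence you flag as a ``subtlety''. You write that applying (i) to $V$ yields a finite set $F\subseteq A$ such that $\sum_{a\in E}z_aa\in V$ for every finite $E\subseteq A\setminus F$ \emph{and every choice of integer coefficients}. But in (i) the coefficient family is quantified \emph{before} $F$: for each fixed $\{z_a:a\in A\}$ it provides a finite $F$ that may depend on that family, and nothing more. The uniform-in-coefficients conclusion you invoke is verbatim statement (ii) (applied to $V$), so at this point you are assuming what is to be proved and the cycle (ii)$\to$(i)$\to$(iii)$\to$(ii) collapses. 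You also misplace where the content lies: (iii)$\to$(ii) is immediate --- as your own final paragraph correctly shows, $\hull{A\setminus F}$ is by definition the set of all finite integer combinations $\sum_{a\in E}z_aa$ with finite $E\subseteq A\setminus F$, so (ii) and (iii) are reformulations of one another (the paper dismisses their equivalence as obvious). The entire weight of the proposition is the quantifier exchange (i)$\to$(ii), for which your proposal contains no argument; and this implication does require one, since a priori (i) could be strictly weaker.

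The paper closes exactly this gap by contraposition. Suppose (ii) fails for some neighbourhood $U$: for every finite $F\subseteq A$ there exist a finite $E\subseteq A\setminus F$ and integers $\{z_a:a\in E\}$ with $\sum_{a\in E}z_aa\notin U$. (One may assume $A$ infinite, the finite case being trivial.) Iterating this with $F=E_0\cup\dots\cup E_{n-1}$ produces pairwise disjoint finite sets $E_n\subseteq A$, each carrying coefficients whose sum avoids $U$. Disjointness lets you merge these into a single indexed family $\{z_a:a\in A\}$, choosing $z_a$ arbitrarily for $a\notin\bigcup_{n\in\N}E_n$, and this one family defeats (i): given any finite $F$, each element of $F$ lies in at most one $E_n$, so some $E_k$ is disjoint from $F$, i.e.\ $E_k\subseteq A\setminus F$, and $\sum_{a\in E_k}z_aa\notin U$. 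Some such diagonal construction is indispensable to your (i)$\to$(iii) step; with it inserted, the remaining two implications of your cycle are fine as written.
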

\begin{proof}
Items (ii) and (iii) are obviously equivalent and (ii) trivially implies (i). 

If $A$ is finite, then (i) and (ii) are equivalent. Thus we may assume that $A$ is infinite. To show that (i) implies (ii) we will prove the contrapositive. Suppose that (ii) does not hold. That is, we can fix a neighbourhood $U$ of $0$ such that for every finite $F\subseteq A$ there exists finite $E\subseteq A\setminus F$ such that \eqref{exchange:quantifiers} does not hold. This allows us to pick inductively for each $n\in\N$  a finite $E_n\subseteq A$ and a corresponding indexed set $\{z_a:a\in E_n\}$ of integers such that the collection $\{E_n:n\in\N\}$ is pairwise distinct and \begin{equation}\label{eq:exchange:quantifiers}
\sum_{a\in E_n} z_aa\not\in U \mbox{ for all } n\in\N.
\end{equation} 
For $a\in A\setminus \bigcup_{n\in\N}E_n$ pick an integer $z_a$ arbitrarily. Then the neighbourhood $U$ and the indexed set $\{z_a:a\in A\}$ of integers witness that (i) does not hold. Indeed, if $F\subseteq A$ is an arbitrary finite set, then, since $E_n$'s are pairwise distinct, there is $k\in\N$ such that $E_k\subseteq A\setminus F$. Hence \eqref{exchange:quantifiers} does not hold by \eqref{eq:exchange:quantifiers}.
\end{proof}

\begin{proposition}\label{as:is:acs}
Every absolutely summable set in a topological group is absolutely Cauchy summable. The converse implication holds in complete groups.
\end{proposition}

\begin{proof} Let $A$ be an absolutely summable set in a topological group $G$. Fix a neighbourhood $U$ of $0$ and an indexed set $\{z_a:a\in A\}$ of integers. By Proposition \ref{exchange of quantifiers generalized} it suffices to find finite $F\subseteq A$ such that \eqref{exchange:quantifiers} holds. 

Let $s$ be the sum of $\{z_aa:a\in A\}$. Pick a symmetric neighbourhood $V$ of $0$ with the property that $V+V\subseteq U$.  By our assumption, there exists finite $F\subseteq A$ such that 

\begin{equation}\label{eq:as:acs}
s-\sum_{a\in E\cup F}z_aa=s-\sum_{a\in F}z_aa-\sum_{a\in E}z_aa\in V \mbox{ for each finite } E\subseteq A\setminus F.
\end{equation}

In particular, for $E=\emptyset$ we have
$$
s-\sum_{a\in F}z_aa\in V. 
$$ 
This together with \eqref{eq:as:acs} gives us
$$
\sum_{a\in E}z_aa\in V+V\subseteq U \mbox{ for every finite } E\subseteq A\setminus F,
$$
because $V$ was chosen symmetric. This gives us \eqref{exchange:quantifiers}.

To prove the second assertion, assume that $A$ is an absolutely Cauchy summable subset of a complete group $G$.
Then for every indexed set $\{z_a:a\in A\}$ of integers the net $\{\sum_{a\in F}z_aa:F\subseteq A, F$ is finite$\}$ is a Cauchy net in $G$, where the net order is given by the set inclusion.
Thus, this net converges to some element $g \in G$. A straightforward check shows that $g=\sum_{a\in A}z_aa$.
\end{proof}

\begin{question}
Let $G$ be a linear group. Assume that a subset of $G$ is absolutely Cauchy summable (if and) only if it is absolutely summable. Must $G$ be complete?
\end{question}

\begin{remark}\label{f:omgea:is:abs:sum}
Following \cite{DSS_arxiv} we say that a faithfully indexed sequence $\{a_i:i\in\N\}$ in a topological group $G$ is {\em $f_\omega$-(Cauchy) summable} provided that the sequence $\{\sum_{i=1}^nz_ia_i:n\in\N\}$ converges (is a Cauchy sequence) for every sequence $\{z_i:i\in\N\}$ of integers.  It is easy to see that a faithfully indexed sequence in a topological group is $f_\omega$-summable if and only if it is absolutely summable. Further, from Proposition \ref{exchange of quantifiers generalized} it follows that it is $f_\omega$-Cauchy summable if and only if it is absolutely Cauchy summable. For other versions of $f_\omega$-summability see \cite{DSS_arxiv,Spe}.
\end{remark}

\begin{remark}
\label{TAP:remark}
In \cite{SS} the notion of a TAP group was introduced and investigated.  (This property appeared earlier without a specific name in \cite{Husek1}; see ($\star$) on page 163 of \cite{Husek1}.) An equivalent definition from \cite{DSS_arxiv} says that a topological group is TAP if and only if it contains no $f_\omega$-summable sequences. Therefore, {\em a topological group $G$ is TAP if and only if every absolutely summable set in $G$ is finite\/}. Other variants of this property were studied in \cite{DT-private}.
\end{remark}

\begin{remark}
Inspired by the notion of a linearly independent set defined in Remark \ref{remark:4.12}, one can introduce a correspondent notion for topological groups.
Indeed, if we replace the real sequence 
$\{r_i:i\in\N\}$ in \eqref{eq:dalsi:rovnice} by a sequence $\{z_i:i\in\N\}$ of integers, then this notion makes sense also for a topological group, and thereby we obtain another generalization of independence of a set. (Here $\sum_{i=0}^\infty z_ia_i=0$ means that the net $\left\{\sum_{i\in I}z_ia_i: I\subseteq\N\mbox{ if finite}\right\}$ converges to $0$.)
\end{remark}

\section{Extension of the Kalton map and direct products}

We use $\overline{G}$ to denote the completion of an abelian topological group $G$.

Let $G$ be an abelian topological group.
As we have seen in Theorem \ref{Proposition:Udine}, the Kalton map $\kal{A}:S_A\to G$ is continuous if and only if $A$ is absolutely Cauchy summable. 
In such a case,
there exists a unique continuous extension 
$$
\skal{A}:\overline{S_A}\to \overline{G}
$$ 
of $\kal{A}$ which is also a group homomorphism.
Note that 
$$
P_A=\prod_{a\in A}\hull{a}\subseteq \prod_{a\in A}\overline{\hull{a}}=\overline{S_A},
$$
 so the restriction of $\skal{A}$ to $P_A$ is well-defined. With a slight abuse of notations, we shall denote this restriction also by $\skal{A}$.

In our next proposition we provide an exact condition when the continuous extension $\skal{A}: P_A\to\overline{G}$ of the Kalton map $\kal{A}$ over $P_A$ takes values in $G$ rather than in $\overline{G}$.  

\begin{proposition}\label{Proposition:Prague}
Let $A$ be a subset of a topological group $G$ such that $0\not\in A$. Then the following statements are equivalent:
\begin{itemize}
 \item[(i)] $A$ is  an absolutely summable set in $G$;
 \item[(ii)] the Kalton map $\kal{A}:S_A\to G$ is continuous and 
its continuous extension  $\skal{A}:P_A\to\overline{G}$ satisfies $\skal{A}(P_A)\subseteq G$.
\end{itemize}
Furthermore, if these equivalent conditions hold, then 
$\skal{A}(h)=\sum_{a\in A} \kal{A}(h_a)$
for every $h=\{h_a\}_{a\in A}\in P_A$.
\end{proposition}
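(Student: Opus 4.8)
The plan is to reduce everything to a single computation of $\skal{A}$ on the elements of $P_A$. First I would observe that under either hypothesis the Kalton map $\kal{A}$ is continuous, so that $\skal{A}$ is defined: if (i) holds then $A$ is absolutely Cauchy summable by Proposition \ref{as:is:acs}, while if (ii) holds the continuity of $\kal{A}$ is assumed outright; in both cases Theorem \ref{Proposition:Udine} guarantees that $A$ is absolutely Cauchy summable, and hence the continuous homomorphic extension $\skal{A}:\overline{S_A}\to\overline{G}$ exists. Throughout I would use the identification $\overline{S_A}=\prod_{a\in A}\overline{\hull{a}}$ recorded just before the statement, under which $P_A=\prod_{a\in A}\hull{a}$ is a subset of $\overline{S_A}$.

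The key step is the following description of $\skal{A}$ on $P_A$, which will simultaneously yield the \emph{Furthermore} clause. Fix $h=\{h_a\}_{a\in A}\in P_A$ and, for each finite $F\subseteq A$, let $h_F\in S_A$ be the truncation whose $a$-th coordinate is $h_a$ for $a\in F$ and $0$ otherwise. Since a basic neighbourhood of $h$ in the product topology of $\prod_{a\in A}\overline{\hull{a}}$ constrains only finitely many coordinates, the net $\{h_F:F\subseteq A\text{ finite}\}$, ordered by inclusion, converges to $h$ in $\overline{S_A}$. By continuity of $\skal{A}$ and the fact that $\skal{A}$ agrees with $\kal{A}$ on $S_A$, I obtain
$$
\skal{A}(h)=\lim_F \kal{A}(h_F)=\lim_F \sum_{a\in F}\kal{A}(h_a)=\sum_{a\in A}\kal{A}(h_a),
$$
where the last equality is simply the definition of the Bourbaki sum in $\overline{G}$. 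This proves the \emph{Furthermore} part and shows that $\skal{A}(h)\in G$ holds precisely when the Bourbaki sum $\sum_{a\in A}\kal{A}(h_a)$ lies in $G$.

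With this in hand the equivalence is bookkeeping via the surjection $\{z_a:a\in A\}\mapsto\{z_aa\}_{a\in A}$ from integer families onto $P_A$ (every element of $\hull{a}$ has the form $z_aa$). For (i)$\to$(ii): given $h\in P_A$, write $h_a=z_aa$; absolute summability of $A$ provides $g\in G$ with $g=\sum_{a\in A}z_aa$, and by the displayed formula $\skal{A}(h)=g\in G$, so $\skal{A}(P_A)\subseteq G$. For (ii)$\to$(i): given an integer family $\{z_a\}$, the element $h=\{z_aa\}_{a\in A}$ lies in $P_A$, so $\skal{A}(h)\in G$ by hypothesis; since $A$ is absolutely Cauchy summable, the net of partial sums $\{\sum_{a\in F}z_aa\}$ is Cauchy in $G$, and it converges in $\overline{G}$ to $\skal{A}(h)\in G$, hence converges in $G$ to that same element because $G$ carries the subspace topology inherited from $\overline{G}$. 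Thus $\{z_aa:a\in A\}$ is summable in $G$, i.e. $A$ is absolutely summable.

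The only genuinely delicate points are the convergence of the truncation net to $h$ inside the completion $\overline{S_A}$ and the transfer between convergence in $\overline{G}$ and convergence in $G$. Once the identification $\overline{S_A}=\prod_{a}\overline{\hull{a}}$ is granted, both are routine, so I expect no serious obstacle beyond keeping the two summation conventions (summing over coordinates of $S_A$ versus summing in $G$) carefully aligned.
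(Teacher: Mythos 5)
Your argument is correct and is essentially the paper's own proof: your displayed identity $\skal{A}(h)=\lim_F\kal{A}(h_F)=\sum_{a\in A}\kal{A}(h_a)$ is precisely the paper's key computation \eqref{eq:18}, which the paper obtains by invoking Lemma \ref{summable:sets:in:direct:product} (your convergence of the truncation net to $h$ in $P_A\subseteq\overline{S_A}$) and Lemma \ref{cont:hom:preserves:summ} (your push-forward of that net through the continuous homomorphism $\skal{A}$), so you have merely inlined those two lemmas. Both directions of the equivalence then reduce to the same bookkeeping as in the paper, namely uniqueness of limits in the Hausdorff group $\overline{G}$ for (i)$\to$(ii) and the subspace-topology transfer of convergence from $\overline{G}$ to $G$ for (ii)$\to$(i).
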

\begin{proof}
For every $a\in A$ let $x_a$ be the unique element of the cyclic summand $\hull{a}$ of $S_A$ such that $\kal{A}(x_a)=a$.
Define $X=\{x_a:a\in A\}\subseteq S_A$.
Clearly, $A=\kal{A}(X)$ and $\kal{A}$ is injective on $X$.

(i)~$\to$~(ii)
Since $A$ is absolutely summable, it is absolutely Cauchy summable by Proposition \ref{as:is:acs}. By Theorem \ref{Proposition:Udine}, the Kalton map $\kal{A}$ is continuous. Therefore, there exists a unique continuous homomorphism $\skal{A}:P_A\to\overline{G}$ extending $\kal{A}$.

Let $h=\{h_a\}_{a\in A}\in P_A$ be arbitrary.  There exists an indexed set $\{z_a:a\in A\}$ of integers such that $h_a=z_a x_a$ for every $a\in A$. Then 
$
h=\sum_{a\in A}z_a x_a 
$.
Since $X$ is absolutely summable in $P_A$ by Lemma \ref{summable:sets:in:direct:product} and $\kal{A}$ (and thus, $\skal{A}$) is injective on $X$, 
from Lemma \ref{cont:hom:preserves:summ}
we conclude that
\begin{equation}
\label{eq:18}
\skal{A}(h)=\skal{A}\left(\sum_{a\in A}z_a x_a\right)=
\sum_{a\in A}z_a\skal{A}(x_a)
=
\sum_{a\in A}z_a\kal{A}(x_a)
=
\sum_{a\in A}z_aa.
\end{equation}
Since $A$ is absolutely summable by (i), $\sum_{a\in A}z_a a\in G$.  This shows that $\skal{A}(P_A)\subseteq G$.

(ii)~$\to$~(i)
The set $X$ is absolutely summable in $P_A$ by Lemma \ref{summable:sets:in:direct:product}.
Since $\skal{A}:P_A\to G$ is continuous by (ii),
Lemma \ref{cont:hom:preserves:summ} implies that the set $A=\skal{A}(X)$ is absolutely summable in $\skal{A}(P_A)$. Since $\skal{A}(P_A)\subseteq G$ by (ii), $A$ is absolutely summable in $G$ as well.

The final statement of the proposition follows from \eqref{eq:18}.
\end{proof}

\begin{theorem}\label{prod:in:group}\label{characterization:of:topological:isomorphism:of:extended:Kalton:maps}

For a subset $A$ of a topological group $G$ such that $0\not\in A$,
the following conditions are equivalent:
\begin{itemize}
 \item[(i)] the Kalton map $\kal{A}:S_A\to\overline{G}$ is continuous and its continuous extension  $\skal{A}:P_A\to\overline{G}$ is  a topologically isomorphic embedding satisfying $\skal{A}(P_A)\subseteq G$;
 \item[(ii)] the set $A$ is both absolutely summable and topologically independent in $G$.
\end{itemize}
\end{theorem}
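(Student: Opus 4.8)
The plan is to deduce both implications by combining Proposition~\ref{Proposition:Prague} (which already handles the set-theoretic condition $\skal{A}(P_A)\subseteq G$ together with the continuity of $\kal{A}$), Theorem~\ref{Kalton:top:iso} (which controls the embedding property on the \emph{sum} $S_A$), and the functoriality of completion, which is what transports the embedding property from $S_A$ to the \emph{product} $P_A$.

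For (ii)$\to$(i), assume $A$ is absolutely summable and topologically independent. Since every absolutely summable set is absolutely Cauchy summable by Proposition~\ref{as:is:acs}, Theorem~\ref{Proposition:Udine} gives that $\kal{A}$ is continuous, and Proposition~\ref{Proposition:Prague} then yields $\skal{A}(P_A)\subseteq G$. It remains to show that $\skal{A}\restriction_{P_A}$ is a topological embedding. Here I would invoke Theorem~\ref{Kalton:top:iso}: topological independence together with absolute Cauchy summability makes $\kal{A}\colon S_A\to\hull{A}$ a topological isomorphism. Passing to completions, the unique continuous extension $\skal{A}\colon\overline{S_A}\to\overline{\hull{A}}$ is itself a topological isomorphism onto the closure of $\hull{A}$ in $\overline{G}$. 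Since $P_A\subseteq\overline{S_A}=\prod_{a\in A}\overline{\hull{a}}$, the restriction of a topological isomorphism to the subspace $P_A$ is a topological embedding into $\overline{G}$, and its image already lies in $G$ by the previous step; this is precisely condition (i).

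For (i)$\to$(ii), assume (i). The continuity of $\kal{A}$ together with $\skal{A}(P_A)\subseteq G$ gives the absolute summability of $A$ directly from Proposition~\ref{Proposition:Prague}. For topological independence, I would restrict the embedding $\skal{A}$ to the subgroup $S_A\subseteq P_A$: this restriction is exactly $\kal{A}$, so $\kal{A}$ is a topologically isomorphic embedding, and Proposition~\ref{Proposition:Matsuyama}(ii) then forces $A$ to be topologically independent in $G$.

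The one genuinely delicate point is the completion step in (ii)$\to$(i): one must verify that the continuous extension of the topological isomorphism $\kal{A}\colon S_A\to\hull{A}$ to the completions is again a topological isomorphism onto $\overline{\hull{A}}$, so that its restriction to $P_A$ remains a topological embedding. Everything else is bookkeeping: identifying $\skal{A}\restriction_{P_A}$ with this extension via the uniqueness of continuous extensions, and observing that an embedding into $\overline{G}$ whose image lies in $G$ is exactly what condition (i) demands.
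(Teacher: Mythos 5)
Your proposal is correct and follows essentially the same route as the paper: both directions rest on Proposition~\ref{as:is:acs}, Theorem~\ref{Kalton:top:iso}, the uniqueness of completions to extend the embedding $\kal{A}$ to $\overline{S_A}$ and restrict it to $P_A$, Proposition~\ref{Proposition:Prague} for $\skal{A}(P_A)\subseteq G$ and for absolute summability, and Proposition~\ref{Proposition:Matsuyama}~(ii) for topological independence. The ``delicate point'' you flag is handled in the paper exactly as you suggest, by the uniqueness of completions of topological groups.
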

\begin{proof}
(i)$\to$(ii) By our assumption and the implication (ii)~$\to$~(i) of Proposition \ref{Proposition:Prague}, $A$ is absolutely summable in $G$. Since $\kal{A}$ is a topologically isomorphic embedding, $A$ is topologically independent in $G$ by Proposition \ref{Proposition:Matsuyama} (ii).

(ii)$\to$(i)  
Since $A$ is absolutely summable by our assumption, it is Cauchy summable in $G$ by Proposition \ref{as:is:acs}.
Since $A$ is also topologically independent in $G$, applying the implication (i)~$\to$~(ii) of Theorem 
\ref{Kalton:top:iso}, 
we conclude that the Kalton map $\kal{A}: S_A\to G$ is a topologically isomorphic embedding. By the uniqueness of completions of topological groups, its continuous extension 
$\skal{A}: \overline{S_A}\to \overline{G}$ is also a topologically isomorphic embedding. Since $S_A\subseteq P_A\subseteq\overline{S_A}$, its restriction
$\skal{A}:P_A\to\overline{G}$  to $P_A$ is a topologically isomorphic embedding as well. It remains to note that $\skal{A}(P_A)\subseteq G$ by Proposition \ref{Proposition:Prague}.
\end{proof}

\begin{corollary}
For a topological group $G$ and a cardinal $\kappa$, the following conditions are equivalent:
\begin{itemize}
 \item[(i)] $G$ contains a subgroup topologically isomorphic to a direct product of $\kappa$-many non-trivial groups;
 \item[(ii)] $G$ contains a topologically independent absolutely summable set of size $\kappa$.
\end{itemize}
\end{corollary}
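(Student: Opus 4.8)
The plan is to follow the pattern of Corollary \ref{sum:corollary}, replacing its engine Theorem \ref{Kalton:top:iso} by Theorem \ref{characterization:of:topological:isomorphism:of:extended:Kalton:maps}. The implication (ii)$\to$(i) is then immediate: if $A\subseteq G\setminus\{0\}$ is topologically independent and absolutely summable with $|A|=\kappa$, then Theorem \ref{characterization:of:topological:isomorphism:of:extended:Kalton:maps} makes $\skal{A}:P_A\to\overline{G}$ a topologically isomorphic embedding with $\skal{A}(P_A)\subseteq G$. Hence $\skal{A}(P_A)$ is a subgroup of $G$ topologically isomorphic to $P_A=\prod_{a\in A}\hull{a}$, which is a direct product of $\kappa$-many non-trivial cyclic groups, each $\hull{a}$ being non-trivial since $a\neq 0$. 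Thus $G$ contains a direct product of $\kappa$-many non-trivial topological groups.

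For (i)$\to$(ii) I would first invoke the observation from the introduction that $G$ contains a direct product of $\kappa$-many non-trivial topological groups exactly when it contains one of $\kappa$-many non-trivial \emph{cyclic} topological groups. So fix a topological isomorphism $\phi$ from $\prod_{i\in I}C_i$ onto a subgroup $K$ of $G$, where each $C_i$ is a non-trivial cyclic topological group and $|I|=\kappa$. Let $x_i\in\prod_{i\in I}C_i$ be the element carrying a generator of $C_i$ in the $i$-th coordinate and $0$ elsewhere, and put $X=\{x_i:i\in I\}$. By Lemma \ref{summable:sets:in:direct:product} the set $X$ is absolutely summable in $\prod_{i\in I}C_i$, and by Lemma \ref{topologcally:independent:in:direct:sum} together with Remark \ref{lemma:obvious} it is topologically independent there as well.

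The decisive step is then to transport these two properties along $\phi$ so as to obtain the required set $A=\phi(X)\subseteq G\setminus\{0\}$ with $|A|=\kappa$. Absolute summability transports by Lemma \ref{cont:hom:preserves:summ} applied to the continuous homomorphism $\prod_{i\in I}C_i\xrightarrow{\phi}K\hookrightarrow G$, which is injective on $X$; hence $A$ is absolutely summable in $G$. Topological independence transports because $\phi$ is a topological isomorphism onto $K$, so it carries topologically independent sets to topologically independent sets, exactly as in the final lines of the proof of Proposition \ref{Proposition:Matsuyama}~(ii); thus $A$ is topologically independent in $K$, and therefore in $G$ by Remark \ref{lemma:obvious}. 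This produces the set required in (ii).

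I expect the main obstacle to lie entirely in this transport step of (i)$\to$(ii), and specifically in two bookkeeping points. First, one must know that the canonical coordinate set $X$ is topologically independent in the \emph{product} $\prod_{i\in I}C_i$, whereas Lemma \ref{topologcally:independent:in:direct:sum} only delivers it in the direct sum; this is resolved via Remark \ref{lemma:obvious}, since topological independence is an intrinsic property of $\hull{X}$ and the groups $\bigoplus_{i\in I}C_i$ and $\prod_{i\in I}C_i$ induce the same subspace topology on $\hull{X}$. Second, one needs the elementary neighbourhood-chasing verification that a topological isomorphism preserves topological independence. Both points are routine once stated, and the remainder of the argument is a direct appeal to Theorem \ref{characterization:of:topological:isomorphism:of:extended:Kalton:maps}.
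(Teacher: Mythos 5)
Your proof is correct and takes essentially the same approach as the paper's: (ii)$\to$(i) is the same appeal to Theorem~\ref{prod:in:group}, and (i)$\to$(ii) rests on the same Lemmas~\ref{topologcally:independent:in:direct:sum} and~\ref{summable:sets:in:direct:product} together with Remark~\ref{lemma:obvious}. The only difference is cosmetic: the paper avoids your reduction to cyclic factors and your explicit transport along $\phi$ via Lemma~\ref{cont:hom:preserves:summ} by simply identifying the subgroup $H$ with the product $\prod_{a\in A}H_a$ and choosing the non-zero coordinate elements $x_a$ directly inside $G$.
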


\begin{proof}
(i)~$\to$~(ii) Let $H$ be a subgroup of $G$ topologically isomorphic to a product $\prod_{a\in A} H_a$ of non-trivial topological groups $H_a$ such that $|A|=\kappa$. Select a non-zero element $x_a\in H_a$ for every $a\in A$ and let
$X=\{x_a:a\in A\}$. Clearly, $|X|=|A|=\kappa$ and $0\not\in X$.

By Lemma \ref{topologcally:independent:in:direct:sum}, $X$ is topologically independent in $\bigoplus_{a\in A} H_a$. Since this direct sum is a subgroup of $H$
which in turn is a subgroup of $G$, we conclude that $X$ is topologically independent in $G$. By Lemma \ref{summable:sets:in:direct:product}, $X$ is absolutely summable in $H$ (and thus, in $G$ as well).

(ii)~$\to$~(i) Let $A$ be a topologically independent absolutely summable subset of $G$ such that $|A|=\kappa$ and $0\not\in A$.
By the implication (ii)~$\to$~(i) of Theorem \ref{prod:in:group}, $G$ contains a subgroup topologically isomorphic to the direct product 
$P_A$. Since $0\not\in A$, each $\hull{a}$ for $a\in A$ is a non-trivial group.\end{proof}

\begin{corollary}\label{cor:one:more}
\label{new:products:corollary}
Let $G$ be a topological group such that each of its cyclic subgroups is discrete. Then the following statements are equivalent:
\begin{itemize}
\item[(i)]  $G$ contains an infinite absolutely summable set;
\item[(ii)]  $G$ contains a subgroup (topologically isomorphic to) $P_B$ for some infinite subset $B$ of $G$.
\end{itemize}
Furthermore, if $G$ is complete, then the following item is equivalent to the other two:
\begin{itemize} 
\item[(iii)] $G$ contains an infinite absolutely Cauchy summable set.
\end{itemize}
\end{corollary}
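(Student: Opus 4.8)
The plan is to prove Corollary~\ref{cor:one:more} by assembling the characterizations already established in the earlier sections, treating the equivalence of (i) and (ii) first (valid for any $G$ with all cyclic subgroups discrete) and then appending (iii) under the completeness hypothesis. For the implication (i)$\to$(ii), I would start with an infinite absolutely summable set $A$ in $G$. By Proposition~\ref{as:is:acs}, $A$ is absolutely Cauchy summable, and since every cyclic subgroup $\hull{a}$ is discrete, Theorem~\ref{Gn:discrete:gives:injective:heomeo} applies to produce an infinite topologically independent subset $B\subseteq A$. The key point is that $B$, being a subset of the absolutely summable set $A$, remains absolutely summable: I expect this to follow from the definition of absolute summability by simply taking $z_a=0$ for $a\in A\setminus B$, so that the summability of $\{z_aa:a\in A\}$ restricts to summability of $\{z_bb:b\in B\}$. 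With $B$ both topologically independent and absolutely summable, the implication (ii)$\to$(i) of Theorem~\ref{prod:in:group} immediately yields that $G$ contains a subgroup topologically isomorphic to $P_B$, giving (ii).

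The reverse implication (ii)$\to$(i) is the easier direction. Given an infinite $B\subseteq G$ with $G\supseteq P_B$ topologically, I would apply Lemma~\ref{summable:sets:in:direct:product}: selecting the generators $x_a\in\hull{a}$, the set $X=\{x_a:a\in B\}$ is absolutely summable in the product $P_B$, hence in $G$. This produces the required infinite absolutely summable set, closing the loop between (i) and (ii).

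For the final assertion under completeness, the strategy is to show (i)$\Leftrightarrow$(iii). The implication (i)$\to$(iii) is free, since every absolutely summable set is absolutely Cauchy summable by Proposition~\ref{as:is:acs}, and this requires no completeness. The converse (iii)$\to$(i) is exactly where completeness does the work: the second assertion of Proposition~\ref{as:is:acs} states that in a complete group every absolutely Cauchy summable set is absolutely summable. So an infinite absolutely Cauchy summable set immediately becomes an infinite absolutely summable set, yielding (i).

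I anticipate the main obstacle is not conceptual but bookkeeping: one must verify that passing to the subset $B$ preserves absolute summability, and that the hypothesis ``each cyclic subgroup of $G$ is discrete'' is genuinely used (it enters precisely through Theorem~\ref{Gn:discrete:gives:injective:heomeo}, which is what allows an absolutely summable set to be thinned to a topologically independent one). The completeness hypothesis should be invoked only in the (iii)$\to$(i) step via the second half of Proposition~\ref{as:is:acs}; I would be careful to phrase the proof so that the equivalence of (i) and (ii) is clearly established without completeness, and (iii) is folded in afterward.
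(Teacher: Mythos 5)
Your proposal is correct and takes essentially the same route as the paper's own proof: pass from absolute summability to absolute Cauchy summability via Proposition~\ref{as:is:acs}, thin $A$ to an infinite topologically independent subset $B$ by Theorem~\ref{Gn:discrete:gives:injective:heomeo}, observe that $B$ inherits absolute summability, and conclude with Theorem~\ref{prod:in:group}, while Lemma~\ref{summable:sets:in:direct:product} gives (ii)$\to$(i) and Proposition~\ref{as:is:acs} settles (iii) under completeness. Your explicit check that absolute summability passes to subsets (by setting $z_a=0$ for $a\in A\setminus B$) correctly fills in a detail the paper states without proof.
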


\begin{proof}
(i)$\to$(ii) Let $A$ be an infinite absolutely summable set in $G$. Then $A$ is absolutely Cauchy summable by Proposition \ref{as:is:acs}. 
By Theorem \ref{Gn:discrete:gives:injective:heomeo}, $A$ contains an infinite topologically independent subset $B$. Being a subset of an absolutely summable set $A$, $B$ is absolutely summable as well.
Now item (ii) follows from the implication (ii)$\to$(i) of  Theorem \ref{prod:in:group}. 

The implication (ii)$\to$(i) follows from Lemma \ref{summable:sets:in:direct:product}.

Finally, the equivalence of items (i) and (iii) for a complete group $G$ follows from Proposition \ref{as:is:acs}.
\end{proof}

\section{Applications to metric NSS groups}
\label{Sec:inf:dir:sums:in:top:groups}

The first theorem in this section demonstrates that the notion of an absolutely Cauchy summable set can be used to characterize the NSS property in metric groups.

\begin{theorem}\label{metric:NSS:iff:no:abs:C:summable}
A metric group $G$ is NSS if and only if every absolutely Cauchy summable set in $G$ is finite. 
\end{theorem}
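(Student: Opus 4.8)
The plan is to prove the theorem by establishing both implications, treating the easy direction first and then focusing energy on the harder converse. For the direction that an NSS metric group $G$ has no infinite absolutely Cauchy summable set, I would argue by contrapositive: suppose $A$ is an infinite absolutely Cauchy summable set in $G$. Fix the NSS neighbourhood $U$ of $0$ containing no non-trivial subgroup. By absolute Cauchy summability there is a finite $F\subseteq A$ with $\hull{A\setminus F}\subseteq U$. Since $A\setminus F$ is still infinite, it is non-empty, so pick any $a\in A\setminus F$; then the non-trivial subgroup $\hull{a}$ is contained in $\hull{A\setminus F}\subseteq U$, contradicting the NSS property. Hence every absolutely Cauchy summable set in an NSS group is finite, and this direction needs no metrizability at all.

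For the converse — that a non-NSS metric group $G$ contains an infinite absolutely Cauchy summable set — I would exploit the metric structure heavily. Being non-NSS means that \emph{every} neighbourhood of $0$ contains a non-trivial subgroup. The plan is to build, recursively, an infinite absolutely Cauchy summable set directly from Definition \ref{def:abs:summ:set}. Fix a decreasing base $\{U_n:n\in\N\}$ of neighbourhoods of $0$ with $U_{n+1}+U_{n+1}\subseteq U_n$. Using non-NSS, inside each $U_n$ choose a non-trivial subgroup, and from it extract a non-zero element $a_n$ together with the whole cyclic group $\hull{a_n}\subseteq U_n$; the key point is that each $\hull{a_n}$ sits inside an arbitrarily small neighbourhood. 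I would then need to arrange that the elements are distinct (so the set $\{a_n:n\in\N\}$ is genuinely infinite) and, crucially, that the subgroup generated by a cofinite tail is small. For the latter, I would select the $a_n$ so that $\hull{a_n}\subseteq U_n$ and, more strongly, that $\hull{\{a_k:k\ge n\}}\subseteq U_{n-1}$; the telescoping condition $U_{n+1}+U_{n+1}\subseteq U_n$ lets one absorb finite sums of tail elements back into a single $U$.

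The main obstacle I expect is controlling $\hull{A\setminus F}$ rather than merely each individual $\hull{a}$: absolute Cauchy summability requires that the subgroup generated by \emph{all} the tail elements together be contained in a prescribed $U$, and a priori these tail subgroups could interact additively to escape any fixed neighbourhood. To overcome this I would impose at stage $n$ the sharper requirement that $\hull{a_m}\subseteq U_{n+m}$ (or a similarly rapidly shrinking schedule) so that any finite sum $\sum_{m\ge n}^{} g_m$ with $g_m\in\hull{a_m}$ telescopes, via repeated application of $U_{k+1}+U_{k+1}\subseteq U_k$, into $U_{n-1}$. Concretely, since a finite sum of elements drawn from $U_{n},U_{n+1},U_{n+2},\dots$ can be grouped so that each successive pair lands one level coarser, the total lands in $U_{n-1}$, giving $\hull{\{a_m:m\ge n\}}\subseteq U_{n-1}$ and hence absolute Cauchy summability of $A=\{a_n:n\in\N\}$. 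The metrizability is exactly what supplies the countable decreasing base making this recursive, telescoping estimate possible; without it one could not reduce the quantifier over all neighbourhoods to a single sequence.
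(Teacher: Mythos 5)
Your proof is correct, but it takes a genuinely different route from the paper's. The paper disposes of this theorem in two lines by citation: by \cite[Theorem 5.5]{DSS_arxiv}, a metric group is NSS if and only if it contains no $f_\omega$-summable sequences, and Remark \ref{f:omgea:is:abs:sum} (via Proposition \ref{exchange of quantifiers generalized}) translates $f_\omega$-(Cauchy) summability of a faithfully indexed sequence into absolute (Cauchy) summability of the corresponding set. You instead give a self-contained argument: the easy direction (where you rightly observe metrizability plays no role) is the same trivial observation, while for the converse you essentially reprove the content of the cited external theorem, choosing a decreasing base $\{U_n:n\in\N\}$ with $U_{n+1}+U_{n+1}\subseteq U_n$, extracting non-zero elements $a_n$ with $\hull{a_n}\subseteq U_n$ from the non-trivial subgroups supplied by the failure of NSS, and telescoping. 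Your telescoping scheme is sound; in fact the plain schedule $\hull{a_m}\subseteq U_m$ already suffices, since a finite sum $\sum_{i=1}^k z_{m_i}a_{m_i}$ with $n\le m_1<\dots<m_k$ can be absorbed from the right, each step using $U_{m}+U_{m}\subseteq U_{m-1}$, landing in $U_{m_1-1}\subseteq U_{n-1}$. Two loose ends you leave implicit are routine: in the easy direction you should pick $a\ne 0$ in $A\setminus F$ (the definition of absolute Cauchy summability does not exclude $0\in A$, but an infinite $A\setminus F$ contains a non-zero element), and distinctness of the $a_n$ follows by descending far enough in the base at each stage, since $\bigcap_n U_n=\{0\}$ in a Hausdorff group (indeed, even with repetitions the set $\{a_n:n\in\N\}$ is automatically infinite, as finitely many values would force some non-zero $s\in\bigcap_n U_n$). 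What the paper's approach buys is brevity and a seamless link to the $f_\omega$-summability machinery of \cite{DSS_arxiv}; what yours buys is a proof readable without that reference, which makes explicit that only a countable base at $0$ (plus Hausdorffness), rather than the full metric, is used.
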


\begin{proof}
By \cite[Theorem 5.5]{DSS_arxiv}, a group $G$ is NSS if and only if $G$ contains no $f_\omega$-summable sequences. According to Remark \ref{f:omgea:is:abs:sum}, 
this occurs precisely when every absolutely Cauchy summable set in $G$ is finite.
\end{proof}

The reader may wish to compare Theorem \ref{metric:NSS:iff:no:abs:C:summable} with  \cite[Theorem 3.9]{DT-private}.

\begin{theorem}\label{thm:basic}
Let $G$ be a metric group such that each of its cyclic subgroups is discrete.
Then the following statements are equivalent:
\begin{itemize}
\item[(i)] $G$ contains a subgroup (topologically isomorphic to) $S_A$ for some infinite subset $A$ of $G$;
\item[(ii)] $G$ is not NSS.
\end{itemize}
\end{theorem}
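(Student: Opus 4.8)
The plan is to establish both implications by combining the earlier characterizations with Theorem~\ref{metric:NSS:iff:no:abs:C:summable}, which connects the NSS property to the (non-)existence of infinite absolutely Cauchy summable sets. For the implication (ii)$\to$(i), suppose $G$ is not NSS. By Theorem~\ref{metric:NSS:iff:no:abs:C:summable}, the negation of the NSS property in a metric group is equivalent to the existence of an infinite absolutely Cauchy summable set in $G$. Since all cyclic subgroups of $G$ are discrete by hypothesis, I can invoke Corollary~\ref{precise:corollary} directly: an infinite absolutely Cauchy summable set in a topological group all of whose cyclic subgroups are discrete yields an infinite subset $B\subseteq A$ such that $G$ contains a subgroup topologically isomorphic to $S_B$. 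This gives (i).

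For the converse (i)$\to$(ii), suppose $G$ contains a subgroup topologically isomorphic to $S_A=\bigoplus_{a\in A}\hull{a}$ for some infinite $A\subseteq G$. The key observation is that such an infinite direct sum cannot be NSS. Indeed, by Lemma~\ref{Cauchy:summable:in:direct:sum}, taking one non-zero element $x_a$ from each summand produces an infinite absolutely Cauchy summable set $X=\{x_a:a\in A\}$ sitting inside this direct sum, hence inside $G$. Applying Theorem~\ref{metric:NSS:iff:no:abs:C:summable} once more, the presence of an infinite absolutely Cauchy summable set forces $G$ to fail the NSS property, giving (ii).

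In fact the cleanest route is to note that the chain of equivalences in Corollary~\ref{cor:thm:basic}---which holds for any topological group whose cyclic subgroups are discrete---already identifies condition (i) here with ``$G$ contains an infinite absolutely Cauchy summable set,'' and Theorem~\ref{metric:NSS:iff:no:abs:C:summable} identifies the latter (in the metric setting) with ``$G$ is not NSS.'' Thus the theorem is essentially the concatenation of Corollary~\ref{cor:thm:basic} with Theorem~\ref{metric:NSS:iff:no:abs:C:summable}, specialized to metric groups. I would phrase the proof as this two-step composition.

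The main obstacle, such as it is, lies in verifying that the metricity hypothesis is genuinely used---it enters only through Theorem~\ref{metric:NSS:iff:no:abs:C:summable}, which in turn rests on \cite[Theorem 5.5]{DSS_arxiv} characterizing NSS metric groups via $f_\omega$-summable sequences. One should take care that the equivalence between ``infinite absolutely Cauchy summable set'' and ``$f_\omega$-Cauchy summable sequence'' (Remark~\ref{f:omgea:is:abs:sum}) is being applied correctly: a single infinite absolutely Cauchy summable \emph{set} gives, after faithful indexing, an $f_\omega$-Cauchy summable sequence, and conversely. Since both the discreteness-of-cyclic-subgroups hypothesis and the metric hypothesis are already packaged into the cited results, no new technical difficulty arises, and the proof reduces to correctly citing Corollary~\ref{cor:thm:basic} and Theorem~\ref{metric:NSS:iff:no:abs:C:summable}.
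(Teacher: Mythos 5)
Your proposal is correct and takes essentially the same route as the paper, whose entire proof reads ``Combine Corollary~\ref{cor:thm:basic} with Theorem~\ref{metric:NSS:iff:no:abs:C:summable}''---exactly the two-step composition you identify in your final paragraph. Your expanded implications merely unfold Corollary~\ref{cor:thm:basic} (via Corollary~\ref{precise:corollary} and Lemma~\ref{Cauchy:summable:in:direct:sum}), so nothing is missing.
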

\begin{proof}
Combine Corollary \ref{cor:thm:basic} with Theorem \ref{metric:NSS:iff:no:abs:C:summable}.
\end{proof}

\begin{corollary}\label{sum:of:Z:inside}
Let $G$ be a torsion-free metric group such that each of its cyclic subgroups is discrete. Then the following statements are equivalent:
\begin{itemize}
 \item[(i)] $G$ is not NSS;
 \item[(ii)] $G$ contains a topologically isomorphic copy of $\Z^{(\N)}$.
\end{itemize}
If $G$ is also complete, then the next condition is equivalent to the above two:
\begin{itemize}
\item[(iii)] $G$ contains a topologically isomorphic copy of $\Z^{\N}$.
\end{itemize}
\end{corollary}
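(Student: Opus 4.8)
The plan is to deduce Corollary \ref{sum:of:Z:inside} from the more general Theorem \ref{thm:basic} and Corollary \ref{cor:one:more}, using the special structure of torsion-free groups whose cyclic subgroups are discrete. The crucial observation is that in a torsion-free group $G$, every non-zero element $a$ generates an infinite cyclic subgroup $\hull{a}\cong\Z$; if moreover $\hull{a}$ is discrete, then the natural isomorphism $\Z\to\hull{a}$ is a topological isomorphism. Consequently, for any independent set $A\subseteq G\setminus\{0\}$ one has $S_A=\bigoplus_{a\in A}\hull{a}$ topologically isomorphic to $\Z^{(A)}$, and likewise $P_A=\prod_{a\in A}\hull{a}$ topologically isomorphic to $\Z^{A}$. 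This is exactly what converts the abstract direct sums $S_A$ and direct products $P_A$ appearing in Theorem \ref{thm:basic} and Corollary \ref{cor:one:more} into the concrete groups $\Z^{(\N)}$ and $\Z^{\N}$.

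For the equivalence of (i) and (ii), I would argue as follows. First I would apply Theorem \ref{thm:basic}: since $G$ is a metric group with all cyclic subgroups discrete, $G$ is not NSS if and only if $G$ contains a subgroup topologically isomorphic to $S_A$ for some infinite $A\subseteq G\setminus\{0\}$. By the observation above, each $\hull{a}$ is topologically isomorphic to $\Z$, so $S_A\cong\Z^{(A)}$, and since $A$ is infinite we may pass to a countably infinite subset to obtain a copy of $\Z^{(\N)}$ inside $G$ (here $\Z^{(A)}$ for infinite $A$ always contains $\Z^{(\N)}$ as a subgroup with the inherited topology, by restricting to countably many coordinates). Conversely, if $G$ contains a copy of $\Z^{(\N)}$, this is in particular an infinite direct sum of non-trivial groups, so $G$ is not NSS by Theorem \ref{thm:basic} again. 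This settles (i)$\leftrightarrow$(ii).

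For the complete case, I would invoke Corollary \ref{cor:one:more}. When $G$ is complete (and metric with discrete cyclic subgroups), items (i), (ii) and (iii) of that corollary are equivalent; in particular $G$ contains an infinite absolutely summable set if and only if it contains an infinite absolutely Cauchy summable set, and this happens exactly when $G$ contains a copy of $P_B$ for some infinite $B$. Combining with Theorem \ref{metric:NSS:iff:no:abs:C:summable}, the existence of an infinite absolutely Cauchy summable set is equivalent to $G$ being not NSS. Since each $\hull{a}$ is topologically isomorphic to $\Z$, the group $P_B=\prod_{a\in B}\hull{a}$ is topologically isomorphic to $\Z^{B}$, and restricting to a countable subset of $B$ yields a copy of $\Z^{\N}$. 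Thus (iii) is equivalent to the other two conditions when $G$ is complete.

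The main obstacle, and the point requiring care rather than difficulty, is the bookkeeping of cardinalities and coordinate restrictions: Theorem \ref{thm:basic} and Corollary \ref{cor:one:more} produce $S_A$ and $P_A$ for an \emph{arbitrary} infinite index set, whereas the target groups $\Z^{(\N)}$ and $\Z^{\N}$ are indexed by $\N$. For the direct sum this is harmless since $\Z^{(\N)}$ embeds topologically into $\Z^{(A)}$ for any infinite $A$. For the direct product the restriction to countably many coordinates likewise yields a topological copy of $\Z^{\N}$ sitting inside $\Z^{A}$ as a closed subgroup (the projection onto the chosen coordinates splits), so the passage to the countable case is again routine. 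The only genuinely load-bearing ingredient is the identification $\hull{a}\cong\Z$ as topological groups, which rests entirely on torsion-freeness together with discreteness of cyclic subgroups.
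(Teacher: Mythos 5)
Your proposal is correct and follows essentially the same route as the paper: the paper also deduces (i)$\leftrightarrow$(ii) from Theorem \ref{thm:basic} and (i)$\leftrightarrow$(iii) from Corollary \ref{new:products:corollary} together with Theorem \ref{metric:NSS:iff:no:abs:C:summable}, using precisely your key observation that torsion-freeness plus discreteness of cyclic subgroups makes each $\hull{a}$ a discrete copy of $\Z$, so that $S_A\cong\Z^{(A)}$ and $P_B\cong\Z^{B}$. Your extra care about restricting to countably many coordinates is sound and merely makes explicit what the paper's two-sentence proof leaves implicit.
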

\begin{proof}
The equivalence of items (i) and (ii) follows from Theorem \ref{thm:basic}, as each cyclic subgroup $\hull{a}$ of $G$ is a copy of $\Z$ with the discrete topology. The equivalence of items (i) and (iii) follows from Corollary \ref{new:products:corollary} and Theorem \ref{metric:NSS:iff:no:abs:C:summable}.
\end{proof}

Remark \ref{remark:Zp} shows that the condition ``all cyclic subgroups are discrete'' cannot be weakened to ``all cyclic subgroups have linear topology'' in 
Theorem \ref{thm:basic}
 and Corollary \ref{sum:of:Z:inside}. 

In \cite[Section 6]{DT-private} a subset $U$ of a topological group $G$ is called {\em root invariant} if $\{y:ny=x \mbox{ for some } n\in\N\}\subseteq U$ whenever $x\in U$. The topological group $G$ is then called  {\em locally root invariant} provided that its topology  has a local base at $0$ consisting of root invariant sets. Finally, $G$ belongs to the class {\em MMP} provided that it is locally root invariant and it is metrizable by a translation-invariant metric $d$ with the property that 
\begin{equation}\label{eq:Vajas's}
d(0,g)\le d(0,ng) \mbox{ for all } g\in G\mbox{ and } n\in\N.
\end{equation}
Clearly, locally root invariant groups are torsion free, while condition \eqref{eq:Vajas's} implies, that $\hull{g}$ is discrete for every $g\in G$. Therefore,  Corollary \ref{sum:of:Z:inside} generalizes the following result from  \cite{DT-private}.

\begin{corollary}
Let $G$ be an MMP group.
\begin{itemize}
\item[(i)]
\cite[equivalence (i)$\leftrightarrow$(iii) of Theorem 6.9]{DT-private}
$G$ contains a topologically isomorphic copy of $\Z^{(\N)}$ if and only if $G$ is not NSS.
\item[(ii)]\cite[equivalence (i)$\leftrightarrow$(v) of Theorem 6.10]{DT-private} If $G$ is also complete then $G$ contains a topologically isomorphic copy of $\Z^{\N}$ if and only if $G$ is not NSS.
\end{itemize}
\end{corollary}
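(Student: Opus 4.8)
The plan is to reduce the final corollary entirely to Corollary~\ref{sum:of:Z:inside}, since an MMP group is exactly the kind of group to which that corollary applies. The key observation is that the two defining properties of an MMP group supply precisely the hypotheses needed: local root invariance forces the group to be torsion free, and the metric condition \eqref{eq:Vajas's} forces every cyclic subgroup $\hull{g}$ to be discrete. Thus the main task is verifying these two implications, after which both parts of the corollary follow by quoting the relevant equivalences of Corollary~\ref{sum:of:Z:inside} verbatim.

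First I would verify that an MMP group $G$ is torsion free. Suppose $g\in G$ is a non-zero torsion element, say $ng=0$ for some $n\in\N$ with $n\geq 1$. Pick any root invariant neighbourhood $U$ of $0$ small enough that $g\notin U$ (possible since $G$ is Hausdorff and local root invariant neighbourhoods form a base). Since $0\in U$ and $n g = 0 \in U$ with $g$ a solution of $ny=0=$ (an element of $U$), root invariance of $U$ yields $g\in U$, a contradiction. Hence $G$ is torsion free. This is essentially the content of the sentence "locally root invariant groups are torsion free," which I would spell out in a line.

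Next I would check that each cyclic subgroup $\hull{g}$ is discrete, using condition \eqref{eq:Vajas's}. Fix a non-zero $g\in G$. Because $G$ is Hausdorff (and torsion free, so $\hull{g}\cong\Z$), we have $d(0,g)=r>0$. For any non-zero integer $m$, write $mg$; by translation invariance $d(0,mg)=d(0,|m|\,g)$, and applying \eqref{eq:Vajas's} with the element $g$ and $n=|m|$ gives $d(0,g)\leq d(0,|m|g)$, so $d(0,mg)\geq r$. Therefore the open ball of radius $r$ around $0$ meets $\hull{g}$ only in $0$, which shows that $0$ is isolated in $\hull{g}$; since $\hull{g}$ is a topological group, isolation of $0$ makes the whole subgroup discrete. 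This establishes that every cyclic subgroup of $G$ is discrete.

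With both hypotheses of Corollary~\ref{sum:of:Z:inside} verified, the proof concludes immediately. For part (i), the equivalence of "$G$ is not NSS" with "$G$ contains a topologically isomorphic copy of $\Z^{(\N)}$" is exactly the equivalence (i)$\leftrightarrow$(ii) of Corollary~\ref{sum:of:Z:inside}. For part (ii), assuming further that $G$ is complete, the equivalence of "$G$ is not NSS" with "$G$ contains a topologically isomorphic copy of $\Z^{\N}$" is the equivalence (i)$\leftrightarrow$(iii) of that same corollary. I expect the only mildly delicate point to be confirming that the discreteness of $\hull{g}$ follows cleanly from \eqref{eq:Vajas's} via translation invariance of the metric; everything else is a direct invocation of the already-established machinery, so there is no genuine obstacle here—the corollary is a packaging of Corollary~\ref{sum:of:Z:inside} for the specific class MMP.
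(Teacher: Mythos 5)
Your proof is correct and follows exactly the paper's route: the paper derives the corollary from Corollary~\ref{sum:of:Z:inside} after remarking that locally root invariant groups are torsion free and that condition~\eqref{eq:Vajas's} makes every cyclic subgroup $\hull{g}$ discrete. Your write-up simply spells out in detail these two verifications (including the correct handling of negative multiples via translation invariance), which the paper leaves as a one-sentence observation.
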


From Corollary \ref{new:products:corollary} and Theorem \ref{metric:NSS:iff:no:abs:C:summable} one obtains the following result:

\begin{theorem}\label{compact:zero:dim:inside}
Let $G$ be an infinite complete metric torsion group. Then the  following statements are equivalent:
\begin{itemize}
\item[(i)] $G$ is not NSS;
\item[(ii)] $G$ contains a subgroup topologically isomorphic to an infinite product of finite non-trivial groups.
\end{itemize} 
In particular, if $G$ is not NSS, then $G$ contains an infinite compact zero-dimensional subgroup.
\end{theorem}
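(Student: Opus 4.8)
The plan is to deduce both equivalences by feeding the torsion hypothesis into Corollary~\ref{new:products:corollary} and Theorem~\ref{metric:NSS:iff:no:abs:C:summable}. The key preliminary observation is that a torsion group automatically satisfies the standing assumption of Corollary~\ref{new:products:corollary}: since every $a\in G$ has finite order, each cyclic subgroup $\hull{a}$ is finite, and a finite subgroup of a Hausdorff group is discrete. Thus all cyclic subgroups of $G$ are discrete, and as $G$ is moreover complete, Corollary~\ref{new:products:corollary} yields the equivalence of ``$G$ contains an infinite absolutely Cauchy summable set'' with ``$G$ contains a subgroup topologically isomorphic to $P_B=\prod_{a\in B}\hull{a}$ for some infinite $B\subseteq G$''.

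For (i)$\to$(ii) I would proceed as follows. If $G$ is not NSS, then by Theorem~\ref{metric:NSS:iff:no:abs:C:summable} it contains an infinite absolutely Cauchy summable set, so by the equivalence above it contains a subgroup topologically isomorphic to $P_B=\prod_{a\in B}\hull{a}$ for some infinite $B\subseteq G$. Discarding $0$ from $B$ if necessary (which only removes a trivial factor and leaves $B$ infinite), we may assume $0\notin B$; then each $\hull{a}$ with $a\in B$ is non-trivial, and finite by the torsion hypothesis, so $P_B$ is exactly an infinite product of finite non-trivial groups, establishing (ii). Conversely, for (ii)$\to$(i) suppose $G$ contains a subgroup topologically isomorphic to a product $\prod_{i\in I}K_i$ of finite non-trivial groups over an infinite index set $I$. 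Choosing $x_i\in K_i\setminus\{0\}$ for each $i$, Lemma~\ref{summable:sets:in:direct:product} shows that the set $X=\{x_i:i\in I\}$, which is infinite since the $x_i$ are pairwise distinct, is absolutely summable in $\prod_{i\in I}K_i$ and hence in $G$; by Proposition~\ref{as:is:acs} it is then absolutely Cauchy summable, so $G$ is not NSS by Theorem~\ref{metric:NSS:iff:no:abs:C:summable}.

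Finally, for the ``in particular'' clause I would reuse the subgroup $P_B=\prod_{a\in B}\hull{a}$ produced in (i)$\to$(ii): each $\hull{a}$ is a finite discrete group, so $P_B$ is a product of finite discrete spaces and is therefore compact by Tychonoff's theorem, zero-dimensional as a product of zero-dimensional spaces, and infinite as a product of infinitely many non-trivial groups; hence $G$ contains an infinite compact zero-dimensional subgroup. I do not anticipate a serious obstacle, as the substantive content is carried entirely by the two cited results; the only matters requiring genuine care are confirming that torsion forces the cyclic subgroups to be finite (hence discrete, so that Corollary~\ref{new:products:corollary} applies) and ensuring the factors $\hull{a}$ of $P_B$ are non-trivial, which one arranges by removing $0$ from $B$.
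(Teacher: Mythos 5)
Your proposal is correct and follows precisely the paper's intended route: the paper proves this theorem simply by combining Corollary~\ref{new:products:corollary} and Theorem~\ref{metric:NSS:iff:no:abs:C:summable}, and you have supplied exactly the details that combination requires (torsion forces each $\hull{a}$ to be finite, hence discrete in a Hausdorff group; discarding $0$ from $B$; and compactness and zero-dimensionality of $P_B$ as a product of finite discrete groups). No gaps.
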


\begin{remark}
All NSS groups are TAP; see Remark \ref{TAP:remark} for the definition of a TAP group. The implication NSS$\to$TAP was proved in \cite{SS}, although it was mentioned much earlier without a proof in \cite{Husek1}.
TAP groups contain no infinite products of non-trivial groups \cite{SS}. 
\end{remark}

\section{Continuity of finite-dimensional projections in topological vector spaces}

The next simple fact is well known; see \cite[1.3 in Chapter III]{Schaefer}.

\begin{fact}
\label{closed:kernels}
Let $E$ and $E'$ be topological vector spaces such that $E'$ is finite-dimensional. Then  
a linear map $f:E\to E'$ is continuous if and only if its kernel $\ker f=\{x\in E: f(x)=0\}$ is a closed subset of $E$.
\end{fact}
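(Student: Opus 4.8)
The plan is to treat the two implications separately, with the forward direction being essentially formal and the reverse direction resting on the classical rigidity of finite-dimensional Hausdorff topological vector spaces.

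For the implication that continuity forces the kernel to be closed, I would simply note that $E'$, being a (Hausdorff) topological vector space, has $\{0\}$ as a closed subset, whence $\ker f=f^{-1}(\{0\})$ is closed as the preimage of a closed set under a continuous map.

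For the converse, assume $\ker f$ is closed and let $q\colon E\to E/\ker f$ be the canonical quotient map, which is continuous for every topological vector space. Since $\ker f$ is closed, the quotient $E/\ker f$ is Hausdorff. The map $f$ factors as $f=\bar{f}\circ q$, where $\bar{f}\colon E/\ker f\to E'$ is the induced linear map; by construction $\bar{f}$ is injective. Consequently $E/\ker f$ is linearly isomorphic to the subspace $\bar{f}(E/\ker f)=f(E)$ of the finite-dimensional space $E'$, so $\dim(E/\ker f)\le\dim E'<\infty$. Thus $E/\ker f$ is a finite-dimensional Hausdorff topological vector space.

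The decisive step is then to invoke the classical theorem that \emph{every} linear map from a finite-dimensional Hausdorff topological vector space into an arbitrary topological vector space is automatically continuous; equivalently, the topology of such a space coincides with the Euclidean one. Applying this to $\bar{f}$ yields the continuity of $\bar{f}$, and since $q$ is continuous, $f=\bar{f}\circ q$ is continuous as well. The only genuine obstacle is this classical rigidity fact, whose proof proceeds by induction on the dimension and uses the local compactness of $\R^n$ to show that every algebraic isomorphism $\R^{n}\to E/\ker f$ is a homeomorphism; this is standard and available in the cited reference, while everything else reduces to the routine factorization above.
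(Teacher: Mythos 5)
Your proof is correct, and it coincides with the standard argument: the paper itself gives no proof of this fact, citing it as well known from Schaefer's book, and the argument found there is exactly your factorization $f=\bar{f}\circ q$ through the (Hausdorff, since $\ker f$ is closed) quotient $E/\ker f$, combined with the classical theorem that a finite-dimensional Hausdorff topological vector space carries the Euclidean topology, so every linear map out of it is continuous. Note also that the forward direction implicitly uses the paper's blanket Hausdorff assumption (so that $\{0\}$ is closed in $E'$), which you correctly invoked.
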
 

\begin{proposition}
\label{R:Cauchy:summable}
Let $A$ be an absolutely Cauchy summable subset of a topological vector space $E$. Then:
\begin{itemize}
\item[(i)]
 for every neighbourhood $V$ of $0$ in $E$ there exists 
a finite set $F\subseteq A$ such that $\rhull{A\setminus F}\subseteq V$;
\item[(ii)] the subspace $\rhull{A}$ of $E$ is locally convex. 
\end{itemize}
\end{proposition}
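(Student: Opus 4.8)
The plan is to prove both items by exploiting the fact that the integers are "coarsely dense" in the reals in a way that absolutely Cauchy summable sets can absorb. The key observation is that for a single element $a$ in a topological vector space, the line $\rhull{a} = \{ra : r \in \R\}$ relates to the cyclic group $\hull{a} = \{za : z \in \Z\}$ through scalar multiplication, and continuity of the scalar multiplication map $\R \times E \to E$ lets me trade real coefficients for integer ones at the cost of a small error.

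For item (i), I would start from the hypothesis and a given neighbourhood $V$ of $0$. First I would fix a balanced (circled) neighbourhood $W$ of $0$ with $W + W \subseteq V$, using that $E$ is a topological vector space. The heart of the matter is to relate $\rhull{A \setminus F} = \{\sum_{a \in E'} r_a a : E' \subseteq A\setminus F \text{ finite}, r_a \in \R\}$ to the subgroup $\hull{A \setminus F} = \{\sum_{a \in E'} z_a a : z_a \in \Z\}$. The idea is that any real coefficient $r_a$ can be written as $r_a = z_a + t_a$ with $z_a \in \Z$ and $t_a \in [0,1)$, so $\sum r_a a = \sum z_a a + \sum t_a a$; the integer part lands in $\hull{A\setminus F}$ which I can push into $W$ by absolute Cauchy summability, while the fractional part $\sum_{a} t_a a$ with $t_a \in [0,1)$ is the genuine obstacle, since it ranges over an uncontrolled finite sum. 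The resolution is to apply absolute Cauchy summability at a scaled level: I would find a finite $F$ so that $\hull{A \setminus F} \subseteq W'$ for a suitably small balanced $W'$, and then observe that $\sum_{a \in E'} t_a a$ with $0 \le t_a < 1$ lies in the balanced convex-type hull generated by $W'$. More precisely, writing $t_a a = t_a \cdot a$ and noting $a \in \hull{a \setminus F} \subseteq W'$ is false in general — so the cleaner route is to apply Proposition \ref{exchange of quantifiers generalized} to get, for the balanced $W'$, a finite $F$ with $\sum_{a \in E'} z_a a \in W'$ for every finite $E' \subseteq A \setminus F$ and every choice of integers, then bound the fractional sum using continuity of scalar multiplication to absorb $\sum t_a a$ into $W$.

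For item (ii), local convexity of $\rhull{A}$, the plan is to show that every neighbourhood of $0$ in $\rhull{A}$ contains a convex neighbourhood. The strategy is to produce, from an arbitrary neighbourhood $V$, a \emph{convex} neighbourhood inside it; the natural candidate is the convex hull of a small set, and item (i) is exactly what makes this work. Given $V$, I would choose $F$ as in item (i) so that $\rhull{A \setminus F} \subseteq V$ after shrinking; the complementary finite-dimensional piece $\rhull{F} = \R^{|F|}$ (by linear independence arguments, or simply as a finite-dimensional subspace) is automatically locally convex since it is Euclidean. Then $\rhull{A} = \rhull{F} + \rhull{A \setminus F}$, and I would build a convex neighbourhood as a sum of a convex neighbourhood in the finite-dimensional part with the already-small tail $\rhull{A \setminus F}$; the subtlety is ensuring the tail subspace itself contributes convexly, which follows because item (i) places an \emph{entire subspace} inside $V$, and subspaces are convex.

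The main obstacle I anticipate is item (i): controlling the fractional parts $\sum_{a \in E'} t_a a$ uniformly over arbitrary finite $E' \subseteq A \setminus F$. Naively these could escape any neighbourhood. The decisive trick will be to apply the equivalent quantifier-exchanged form of absolute Cauchy summability (Proposition \ref{exchange of quantifiers generalized}) not to the given $V$ but to a \emph{balanced} neighbourhood $W$ scaled down enough that, after multiplying by scalars in $[0,1)$ and summing, the result stays inside $V$ — leveraging that in a topological vector space a balanced neighbourhood $W$ satisfies $r W \subseteq W$ for $|r| \le 1$, so the convex-combination-type sum of scaled generators remains controlled. Once item (i) is in hand, item (ii) is essentially formal.
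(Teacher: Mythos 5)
Your proof of item (i) has a genuine gap exactly at the point you flag as the main obstacle, and the proposed rescue does not work. Balancedness of $W'$ does give $t_a a\in W'$ for each single $a\in A\setminus F$ (since $a\in\hull{A\setminus F}\subseteq W'$), but the fractional sum $\sum_{a\in E'}t_a a$ then lies only in $W'+W'+\dots+W'$ with $|E'|$ summands, and $|E'|$ is unbounded over finite $E'\subseteq A\setminus F$; neither balancedness nor continuity of scalar multiplication (a local statement) can absorb sums of arbitrarily many small terms. A ``convex-combination-type'' bound would require a \emph{convex} small neighbourhood, which is precisely what item (ii) is trying to establish and which does not exist in general topological vector spaces (e.g.\ $L^p$ with $0<p<1$), so that route is circular. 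The missing idea, which is the paper's proof, is a closure-plus-density argument that never splits off fractional parts: fix a \emph{closed} balanced neighbourhood $U\subseteq V$ and a finite $F$ with $\hull{A\setminus F}\subseteq U$; then for any integers $z_a$ and any $n\geq 1$ one has $\frac{1}{n}\sum_{a\in E'}z_a a\in U$ by balancedness, and a general point $\sum_{a\in E'}r_a a$ of $\rhull{A\setminus F}$ is the limit of the points $\frac{1}{n}\sum_{a\in E'}\lfloor nr_a\rfloor a\in U$ (continuity of finitely many scalar multiplications), so closedness of $U$ yields $\rhull{A\setminus F}\subseteq U\subseteq V$. Without ``closed'' and this limiting step, the real coefficients are genuinely uncontrolled; your invocation of Proposition~\ref{exchange of quantifiers generalized} adds nothing here, since the definition of absolute Cauchy summability already supplies $\hull{A\setminus F}\subseteq U$.

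Item (ii) is not ``essentially formal'' once (i) is known, and your construction fails on the paper's own counterexample. Your candidate neighbourhood is $B+\rhull{A\setminus F}$ with $B$ a convex neighbourhood of $0$ in $\rhull{F}$: this set is indeed convex and can be made small, but you never verify it is a \emph{neighbourhood} of $0$ in $\rhull{A}$, and in general it is not. When $A$ is linearly independent, $B+\rhull{A\setminus F}=(\pi^A_F)^{-1}(B)$, and Example~\ref{bad:projections} exhibits an infinite linearly independent, absolutely Cauchy summable set $A\subseteq\R^{(\N)}$ for which $\pi^A_F$ is discontinuous for \emph{every} nonempty finite $F$; discontinuity at $0$ means $(\pi^A_F)^{-1}(B)$ fails to be a neighbourhood of $0$ for all sufficiently small $B$ (there, $e_n\to 0$ while $\pi^A_{a_k}(e_n)=a_k\not\to 0$). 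The paper circumvents exactly this obstruction by replacing the tail $\rhull{A\setminus F}$ with its \emph{closure} $K$ (still inside a closed $V$ by item (i)), passing to the quotient $q\colon L\to L/K$, which is continuous by Fact~\ref{closed:kernels} and open, with $L/K$ finite-dimensional Hausdorff and hence locally convex, and pulling back a convex neighbourhood $U\subseteq q(V)$ to get $q^{-1}(U)\subseteq V+K\subseteq V+V\subseteq W$. The closure step is essential: in Example~\ref{bad:projections} the closure of the tail strictly exceeds the tail (e.g.\ $\sum_{i=m}^{n}a_i=e_n-e_{m-1}\to -e_{m-1}$), and it is the $K$-saturated set $q^{-1}(U)$, not your $B+\rhull{A\setminus F}$, that is a neighbourhood. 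So both items need ideas (closedness/density in (i), closure/quotient in (ii)) that your outline is missing.
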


\begin{proof} 
(i)
Recall that a subset $S$ of a vector space  is called {\em balanced} if $rS\subseteq S$ for every scalar $r$ with $|r|\le 1$. It is a folklore fact that every topological vector space has a base of its topology at $0$ consisting of balanced sets. Therefore, we can fix a closed balanced neighbourhood $U$ of $0$ such that $U\subseteq V$. Since $A$ is absolutely Cauchy summable in $E$, there exists a finite set $F \subseteq A$ such that $\hull{A\setminus F}\subseteq U$. Since $U$ is balanced, it contains all lines connecting $0$ with points from $\hull{A\setminus F}$.  Since the union of these lines is dense in $\rhull{A\setminus F}$ and $U$ is closed, $\rhull{A\setminus F}\subseteq U\subseteq V$.

(ii) 
Define $L=\rhull{A}$. Let $W$ be a neighbourhood of $0$ in $L$ and let 
$V$ be a  closed neighbourhood of $0$ in $L$ such that $V + V \subseteq W$. 
Apply item (i) to $V$ and $L$ (taken as $E$) to get a finite subset $F$ of $A$ as in the conclusion of item (i). 
Let  $K$ be the closure of $\rhull{A\setminus F}$ in $L$.
Since $V$ is closed in $L$ and $\rhull{A\setminus F}\subseteq V$, the inclusion $K\subseteq V$ holds as well. Since $K$ has finite co-dimension in $L$, the quotient $L/K$ 
is finite-dimensional, so the quotient map $q: L \to L/K$ is continuous by Fact \ref{closed:kernels}. 
As $\dim L/K < \infty$, it carries a unique topological vector space topology, so 
$q$ is also open. Hence, $q(V)$ is a neighbourhood of $0$ in $L/K$. As $\dim L/K < \infty$, there exists a convex neighbourhood $U$ of $0$ in $L/K$ contained in $q(V)$. Then $q^{-1}(U)$ is a convex neighbourhood of $0$ in $L$ such that $q^{-1}(U) \subseteq q^{-1}(q(V)) = V + K \subseteq V + V \subseteq W$. 
\end{proof}

Let $A$ be a linearly independent subset of a vector space $E$. For every set $B\subseteq  A$ 
we denote by $\pi^A_B: \rhull{A}\to \rhull{B}$  the unique projection from $\rhull{A}$ onto $\rhull{B}$
such that $\ker \pi^A_B=\rhull{A\setminus B}$  and the restriction of $\pi^A_B$ to $\rhull{B}$ is the identity map of $\rhull{B}$.
For $b\in A$, we use $\pi^A_b$ instead of $\pi^A_{\{b\}}$ for simplicity.

\begin{lemma}
\label{lemma:continuity:of:projections}
Let $B$ be a non-empty finite subset of a linearly independent subset $A$ of a topological vector space $E$. Then $\pi^A_B$ is continuous if and only if $\pi^A_b$ is continuous for every $b\in B$. 
\end{lemma}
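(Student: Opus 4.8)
The plan is to prove this equivalence by observing that $\pi^A_B$ decomposes naturally as a finite sum of the individual projections $\pi^A_b$ for $b \in B$, and that continuity is preserved in both directions of such a decomposition. The key algebraic fact, which follows directly from the defining properties of these projections on a linearly independent set, is that
\begin{equation}
\label{eq:projection:decomposition}
\pi^A_B = \sum_{b\in B} \pi^A_b.
\end{equation}
To see this, note that both sides are linear maps $\rhull{A} \to \rhull{B}$, so it suffices to check that they agree on the spanning set $A$ of $\rhull{A}$. For $a \in A \setminus B$ both sides vanish (the left by definition of $\ker \pi^A_B$, the right since each $\pi^A_b$ kills $\rhull{A\setminus\{b\}} \ni a$). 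For $a \in B$ the left side returns $a$, while on the right side the single term $\pi^A_a$ returns $a$ and all other terms vanish. Hence \eqref{eq:projection:decomposition} holds.

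First I would establish the easy direction. Assuming $\pi^A_B$ is continuous, I fix $b \in B$ and observe that $\pi^A_b = \pi^B_b \circ \pi^A_B$, where $\pi^B_b: \rhull{B} \to \rhull{b}$ is the corresponding projection on the finite-dimensional space $\rhull{B}$. Since $\rhull{B}$ is finite-dimensional, it carries its unique topological vector space topology, so the linear map $\pi^B_b$ is automatically continuous. The composition of continuous maps is continuous, giving continuity of $\pi^A_b$. This handles the forward implication for each $b \in B$.

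For the converse, I would assume each $\pi^A_b$ is continuous for $b \in B$ and use \eqref{eq:projection:decomposition} to write $\pi^A_B$ as a finite sum of continuous maps. Since addition in a topological vector space is continuous and $B$ is finite, the finite sum $\sum_{b\in B}\pi^A_b$ is continuous as a map into $\rhull{A}$; composing with the (continuous) inclusion or simply noting that its image lies in $\rhull{B}$ and that the codomain topology is the subspace topology, we conclude that $\pi^A_B$ is continuous.

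I do not anticipate a serious obstacle here; the proof is essentially formal once the decomposition \eqref{eq:projection:decomposition} is in hand. The one point requiring a little care is the forward direction, where I invoke Fact \ref{closed:kernels} (or equivalently the uniqueness of the finite-dimensional topological vector space topology) to guarantee that the projection $\pi^B_b$ onto a one-dimensional subspace of the finite-dimensional space $\rhull{B}$ is automatically continuous. This is precisely the role played by the finiteness of $B$: it is what allows the reduction to the finite-dimensional setting where continuity of linear maps is automatic.
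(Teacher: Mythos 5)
Your proof is correct, and its ``if'' direction takes a genuinely different route from the paper's. The ``only if'' direction is identical: both you and the authors write $\pi^A_b=\pi^B_b\circ\pi^A_B$ and note that $\pi^B_b$, being a linear map on the finite-dimensional space $\rhull{B}$, is automatically continuous. For the converse, however, the paper stays entirely within the closed-kernel criterion of Fact \ref{closed:kernels}: continuity of each $\pi^A_b$ makes each $\ker\pi^A_b=\rhull{A\setminus\{b\}}$ closed, hence $\ker\pi^A_B=\bigcap_{b\in B}\ker\pi^A_b=\rhull{A\setminus B}$ is closed, and since $\rhull{B}$ is finite-dimensional, Fact \ref{closed:kernels} applies again to yield continuity of $\pi^A_B$. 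You instead prove the algebraic identity $\pi^A_B=\sum_{b\in B}\pi^A_b$ (your verification on the spanning set $A$ is complete and correct, using linear independence only through the defining properties of the projections) and conclude by continuity of finite sums of continuous maps, followed by the standard corestriction to the subspace $\rhull{B}$. Both arguments are sound, and each locates the role of finiteness of $B$ correctly, though in different places: in your proof it makes the sum finite, while in the paper's proof it makes $\rhull{B}$ finite-dimensional so that Fact \ref{closed:kernels} is applicable. Your version is slightly more elementary, as the ``if'' direction needs nothing beyond continuity of addition; the paper's version is shorter in context, since Fact \ref{closed:kernels} is already the working tool of that section and the kernel computation $\bigcap_{b\in B}\ker\pi^A_b=\ker\pi^A_B$ is immediate from linear independence. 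One cosmetic remark: in your closing paragraph you credit Fact \ref{closed:kernels} for the continuity of $\pi^B_b$; that works (its kernel is a finite-dimensional, hence closed, subspace of the Hausdorff space $\rhull{B}$), but the paper simply invokes the standard fact that any linear map on a finite-dimensional Hausdorff topological vector space is continuous, which avoids the extra observation that finite-dimensional subspaces are closed.
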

\begin{proof}
To check the ``only if part'', assume that $\pi^A_B$ is continuous and fix an arbitrary $b\in B$. Being a linear map defined on the finite-dimensional space $\rhull{B}$, $\pi^{B}_{b}$ is continuous, and so is the composition $\pi^{B}_{b}\circ \pi^A_B=\pi^A_{b}$.

Let us show the ``if'' part. Assume that $\pi^A_b$ is continuous for every $b\in B$. By Fact \ref{closed:kernels}, $\ker \pi^A_b$
is a closed subset of $E$ for every $b\in B$. Therefore, $\ker\pi^A_B=\bigcap_{b\in B} \ker \pi^A_b$ is a closed subset of $E$ as well. Therefore, 
$\pi^A_B$ is continuous by Fact \ref{closed:kernels}.
\end{proof}

The next example shows that even for a linearly independent, absolutely Cauchy summable subset $A$ of a topological vector space, {\em all\/} projections $\pi^A_B$ for a non-empty finite set $B\subseteq A$ can be discontinuous.

\begin{example}
\label{bad:projections} For each $i\in\N$, let  $e_i=(0,0,\ldots,1,0,\ldots)\in \R^\N$, where the only $1$ is at the $i$th place.  Define $a_0=e_0$ and $a_i=e_i-e_{i-1}$ for $i>0$.
Then {\em $A=\{a_i:i\in\N\}$ is a faithfully indexed (hence infinite) linearly independent, absolutely Cauchy summable subset of  $E=\rhull{A}=\R^{(\N)}$ such that the projection $\pi^A_B$ is discontinuous for each non-empty 
finite set $B\subseteq  A$.} All statements except discontinuity of projections are straightforward. 

By Lemma \ref{lemma:continuity:of:projections}, it suffices to  show that the projection $\pi^A_{a_k}$ is discontinuous for every $k\in\N$. Fix a $k\in\N$. It follows from the definition of $\pi^A_{a_k}$ that  
\begin{equation}
\label{eq:projections}
\pi^A_{a_k}(a_k)=a_k
\text{ and }
\pi^A_{a_k}(a_i)=0
\text{ for all }
i\in\N
\text{ with }
i\not=k.
\end{equation}

Let $n\in\N$ and $n\ge k$.
Note that $e_n=\sum_{i=0}^{n} a_i$ by the definition of $a_i$.
Combining this with the linearity of $\pi^A_{a_k}$, \eqref{eq:projections} and $n\ge k$, we get
\begin{equation}
\label{eq:limit}
\pi^A_{a_k}(e_n)=\sum_{i=0}^{n}\pi^A_{a_k}(a_i)=\pi^A_{a_k}(a_k)=a_k\not=0.
\end{equation}
Since $\lim_{n\to\infty} e_n=0$, yet $\lim_{n\to\infty}\pi^A_{a_k}(e_n)=a_k\not=0=\pi^A_{a_k}(0)$ by \eqref{eq:limit}, the map $\pi^A_{a_k}$ is discontinuous.
\end{example}

\begin{remark}
(i)
{\em If $A$ is an absolutely Cauchy summable subset of a topological vector space, then for every finite-dimensional subspace $F$ of $\rhull{A}$ there exists a continuous projection from $\rhull{A}$ onto $F$.}
Indeed, since $\rhull{A}$ is locally convex by Proposition \ref{R:Cauchy:summable}~(ii), there exists a closed subspace $L$ of $\rhull{A}$ such that 
$\rhull{A}=F\oplus L$ algebraically.\footnote{In fact, it follows from \cite[page 156, statement (2)]{Koethe}
that $\rhull{A}=F\times L$ holds topologically as well.} Therefore, the projection $p:\rhull{A}\to F$ with $\ker p=L$ is continuous by Fact \ref{closed:kernels}.

(ii) Let $A$ be the linearly independent, absolutely Cauchy summable subset of  $\R^{(\N)}$ constructed in Example \ref{bad:projections}.
Then for every  non-empty finite set $B\subseteq A$, the canonical projection $\pi^A_B:\rhull{A}\to\rhull{B}$ is discontinuous, yet by item (i), there exists {\em some\/} continuous projection from $\rhull{A}$ onto $\rhull{B}$.
\end{remark}

The following lemma is perhaps known. We include its proof only for the reader's convenience.

\begin{lemma}
\label{continuity:of:linear:functionals}
A linear functional $f: E\to\R$ on a topological vector space $E$ is continuous if and only if $f(U)\not=\R$ for some neighbourhood $U$ of $0$ in $E$.
\end{lemma}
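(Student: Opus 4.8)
The plan is to establish both implications directly, exploiting the existence of a base of balanced neighbourhoods of $0$ in any topological vector space (the folklore fact already invoked in the proof of Proposition \ref{R:Cauchy:summable}). Recall that a set $S$ is balanced if $rS\subseteq S$ whenever $|r|\le 1$.

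The ``only if'' direction is immediate: if $f$ is continuous, then $U=f^{-1}((-1,1))$ is a neighbourhood of $0$ satisfying $f(U)\subseteq(-1,1)\neq\R$.

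For the ``if'' direction, suppose $f(U)\neq\R$ for some neighbourhood $U$ of $0$. First I would replace $U$ by a balanced neighbourhood $V\subseteq U$ of $0$, which still satisfies $f(V)\neq\R$. The key observation is that $f(V)$ is then a \emph{balanced} subset of $\R$: if $y=f(v)$ with $v\in V$ and $|r|\le 1$, then $rv\in V$, so $ry=f(rv)\in f(V)$. A balanced subset of $\R$ is either all of $\R$ or a bounded interval symmetric about $0$; since $f(V)\neq\R$, it must be bounded, so there is $M>0$ with $|f(v)|\le M$ for all $v\in V$.

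Finally I would deduce continuity from this bound. Given $\varepsilon>0$, the set $\frac{\varepsilon}{2M}V$ is a neighbourhood of $0$, since scalar multiplication by a nonzero scalar is a homeomorphism; and for $x\in\frac{\varepsilon}{2M}V$, writing $x=\frac{\varepsilon}{2M}v$ with $v\in V$, linearity gives $|f(x)|=\frac{\varepsilon}{2M}|f(v)|\le\frac{\varepsilon}{2}<\varepsilon$. Thus $\frac{\varepsilon}{2M}V\subseteq f^{-1}((-\varepsilon,\varepsilon))$, which proves continuity of $f$ at $0$; since $f$ is linear, continuity at $0$ yields continuity everywhere. There is no genuine obstacle here: the only point requiring care is the passage from ``$f(V)$ balanced and proper'' to boundedness of $f$ on $V$, which rests on the elementary classification of balanced subsets of the real line.
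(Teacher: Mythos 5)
Your proposal is correct and follows essentially the same route as the paper's own proof: pass to a balanced neighbourhood $V\subseteq U$, observe that $f(V)$ is a proper balanced (hence bounded) subset of $\R$, and then scale $V$ to get $f$ small near $0$. The only differences are cosmetic --- you spell out the verification that $f(V)$ is balanced and the reduction of continuity to continuity at $0$, which the paper leaves implicit.
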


\begin{proof}
The ``only if'' part is clear. 
To prove the ``if'' part, fix a neighbourhood $U$ of $0$ in $E$ with $f(U)\not=\R$.
Let $V$ be a balanced neighbourhood of $0$ in $E$ contained in $U$. Then $f(V)$ is a proper balanced subset of $\R$, so  it must be bounded; that is, $f(V)\subseteq (-r,r)$ for some real 
number $r>0$. Finally, for every $\varepsilon>0$, $W=\frac{\varepsilon}{r} V$ is a neighbourhood of $0$ in $E$ such that $f(W)= \frac{\varepsilon}{r} f(V)\subseteq (-\varepsilon,\varepsilon)$. This shows that $f$ is continuous.
\end{proof}

 Our next theorem produces infinite sets for which all internal projections are continuous.

\begin{theorem}
\label{subsets:with:continuous:projections}
Every infinite linearly independent, absolutely Cauchy summable subset $A$  of a topological vector space contains an infinite subset $B$ such that the projection $\pi^B_C:\rhull{B}\to \rhull{C}$ is continuous for every finite  set $C\subseteq B$. 
\end{theorem}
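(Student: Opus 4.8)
The goal is to extract from an infinite linearly independent, absolutely Cauchy summable set $A$ an infinite subset $B$ all of whose internal finite projections $\pi^B_C$ are continuous. By Lemma \ref{lemma:continuity:of:projections}, it suffices to arrange that each one-dimensional projection $\pi^B_b:\rhull{B}\to\rhull{b}$ (for $b\in B$) is continuous; equivalently, by Fact \ref{closed:kernels}, that $\ker\pi^B_b=\rhull{B\setminus\{b\}}$ is closed in $\rhull{B}$. Since these projections are linear functionals (after identifying $\rhull{b}$ with $\R$), Lemma \ref{continuity:of:linear:functionals} reduces continuity of $\pi^B_b$ to finding a neighbourhood $U$ of $0$ on which the $b$-th coordinate functional is bounded. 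The plan is therefore to construct $B=\{b_n:n\in\N\}$ by induction so that, for each $n$, the functional reading off the $b_n$-coordinate stays bounded on some fixed neighbourhood when restricted to the span of the chosen elements.

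**The construction.**

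First I would exploit that $A$ is absolutely Cauchy summable to feed myself, at each stage, a "tail control" of the form $\rhull{A\setminus F}\subseteq V$ for prescribed neighbourhoods $V$; this is exactly Proposition \ref{R:Cauchy:summable}(i), which upgrades control of the generated subgroup to control of the generated \emph{subspace}. The idea is to build an increasing sequence of finite sets $F_0\subseteq F_1\subseteq\cdots$ and pick $b_n\in F_n\setminus F_{n-1}$, while choosing a descending chain of balanced neighbourhoods $V_n$ of $0$ so that $\rhull{A\setminus F_n}\subseteq V_n$. The point of local convexity of $\rhull{A}$ (Proposition \ref{R:Cauchy:summable}(ii)) is that I may separate the finite-dimensional piece $\rhull{b_0,\dots,b_n}$ from the tail $\rhull{A\setminus F_n}$ by a continuous linear functional or, equivalently, decompose $\rhull{A}=\rhull{F_n}\oplus L_n$ with $L_n$ closed and of finite codimension (as in the Remark following Example \ref{bad:projections}). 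This gives, for each already-chosen $b_k$, a \emph{globally} continuous coordinate functional $\phi_k$ on $\rhull{A}$ with $\phi_k(b_k)=1$ and $\phi_k$ vanishing on the tail; the subtlety is that I want the restriction of $\phi_k$ to $\rhull{B}$ to equal $\pi^B_{b_k}$, which requires $\phi_k(b_j)=0$ for all other $j$, not merely for the tail elements not yet chosen.

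**The main obstacle and how I would resolve it.**

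The genuine difficulty is that continuity of the internal projection $\pi^B_{b_k}$ is an \emph{intrinsic} property of $B$, whereas the functionals I can produce at stage $n$ only "see" finitely many coordinates and kill a tail; a later-chosen basis vector $b_j$ ($j>n$) could acquire a nonzero $b_k$-coordinate, destroying the match between $\phi_k$ and $\pi^B_{b_k}$. Example \ref{bad:projections} is precisely the cautionary tale where every internal projection fails, so the inductive choices must be rigid enough to prevent this. I would handle it by choosing each new $b_n$ to lie in the closed tail subspace $L_{n-1}$ on which all previously constructed functionals $\phi_0,\dots,\phi_{n-1}$ vanish — this is possible because $A\setminus F_{n-1}$ is infinite and $\rhull{A\setminus F_{n-1}}\subseteq L_{n-1}$. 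Then $\phi_k(b_n)=0$ automatically for every $n>k$, so the restriction $\phi_k\restriction_{\rhull{B}}$ annihilates every $b_j$ with $j\ne k$ and sends $b_k$ to $1$; by linear independence this restriction \emph{is} $\pi^B_{b_k}$, and it is continuous as the restriction of the continuous $\phi_k$. Applying Lemma \ref{lemma:continuity:of:projections} to pass from singletons $\{b_k\}$ to arbitrary finite $C\subseteq B$ then finishes the argument. The one routine point to verify is that local convexity indeed yields the closed complement $L_n$ at each step, which is exactly the content already recorded in the Remark after Example \ref{bad:projections}, so it may be invoked directly.
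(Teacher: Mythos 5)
Your route is genuinely different from the paper's (the paper never invokes local convexity or any separation theorem; it directly exhibits, via Lemma \ref{continuity:of:linear:functionals}, a witnessing neighbourhood $U=V\cap W$ for each canonical projection $\pi^{A_n}_b$ relative to a shrinking chain of ambient sets $A_n$), and your skeleton --- continuous functionals $\phi_k$ killing a tail, with later points chosen inside the common kernel --- can be made to work. But as written there is a genuine gap at the central step. You justify the existence of the $\phi_k$ by a decomposition $\rhull{A}=\rhull{F_n}\oplus L_n$ with $L_n$ closed \emph{and} $\rhull{A\setminus F_n}\subseteq L_n$, citing the Remark after Example \ref{bad:projections}. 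That Remark produces only \emph{some} closed complement of a finite-dimensional subspace; it gives no control over which one, and a complement containing a prescribed tail may simply fail to exist. Indeed, in Example \ref{bad:projections} itself take $F=\{a_0,\dots,a_m\}$: then $e_m=\sum_{i=0}^m a_i\in\rhull{F}$, while $e_n-e_m=\sum_{i=m+1}^{n}a_i\in\rhull{A\setminus F}$ and $e_n\to 0$, so $e_m\in\overline{\rhull{A\setminus F}}$. Hence $\rhull{F}\cap\overline{\rhull{A\setminus F}}\neq\{0\}$, and \emph{no} closed subspace containing $\rhull{A\setminus F}$ can complement $\rhull{F}$. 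So your assertion that local convexity alone lets you ``separate the finite-dimensional piece from the tail'' is false for arbitrary $F_n$; it can hold only for carefully calibrated data, and your sketch never imposes the needed condition --- you choose the descending neighbourhoods $V_n$ with no reference to $b_n$.

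The missing idea is the order of the choices: pick $b_n\in A\setminus F_{n-1}$ \emph{first}, and only then the neighbourhood. Since $D_{n-1}=\rhull{\{b_0,\dots,b_{n-1}\}}$ is finite-dimensional, hence closed, and $b_n\notin D_{n-1}$ by linear independence, one can choose a closed neighbourhood $V_n$ of $0$ with $b_n\notin D_{n-1}+V_n$; then Proposition \ref{R:Cauchy:summable}(i) supplies a finite $F_n\supseteq F_{n-1}\cup\{b_n\}$ with $\rhull{A\setminus F_n}\subseteq V_n$. Since a closed subspace plus a finite-dimensional subspace is closed in a Hausdorff topological vector space, $M_n=D_{n-1}+\overline{\rhull{A\setminus F_n}}\subseteq D_{n-1}+V_n$ is a closed subspace of $\rhull{A}$ avoiding $b_n$, and now (and only now) Hahn--Banach in the locally convex space $\rhull{A}$ (Proposition \ref{R:Cauchy:summable}(ii)) yields a continuous $\phi_n$ with $\phi_n(b_n)=1$ and $\phi_n(M_n)=\{0\}$. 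Your vanguard choice $b_j\in A\setminus F_{j-1}\subseteq A\setminus F_n$ for $j>n$ then gives $\phi_n\restriction_{\rhull{B}}=\pi^B_{b_n}$ (identifying $\rhull{b_n}$ with $\R$), and Lemma \ref{lemma:continuity:of:projections} finishes. Note that only the single point $b_n$ --- not all of $\rhull{F_n}$, as you claim --- can be separated from the tail; this weakening is essential by the counterexample above. That quantitative interlocking of $b_n$, $V_n$ and $F_n$ is the actual heart of the induction (the paper's analogue is its choice of $O$ with $c\notin O$ and $V-V-V\subseteq O$, and its decomposition $d=a+b+c$), and its absence, together with the false complement claim, is a genuine gap rather than a routine omission.
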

\begin{proof}
Define $B_{-1}=\emptyset$ and $A_{-1}=A$. By induction on $n\in\N$, we shall select  sets $B_n$ and  $A_n$ satisfying the following conditions:
\begin{itemize}
\item[(i$_n$)] $A_n$ is infinite;
\item[(ii$_n$)] $B_n$ is finite;
\item[(iii$_n$)] $B_n\subseteq A_n\subseteq A$;
\item[(iv$_n$)] $B_{n-1}\subseteq B_n$ and $B_n\setminus B_{n-1}\not=\emptyset$;
\item[(v$_n$)] $A_n\subseteq A_{n-1}$;
\item[(vi$_n$)] $\pi^{A_n}_b$ is continuous for every $b\in B_n$. (By 
Lemma \ref{lemma:continuity:of:projections}, 
 this is equivalent to the continuity of the map $\pi^{A_n}_{B_n}$.)
\end{itemize}

Suppose that $n\in\N$ and the sets $A_m$ and $B_m$ satisfying conditions 
(i$_m$)--(vi$_m$) have already been selected for all $m\in\N$ with $m<n$.  We shall define sets $A_n$ and $B_n$ satisfying conditions
(i$_n$)--(vi$_n$).

Since $A_{n-1}$ is infinite and $B_{n-1}$ is finite by 
(i$_{n-1}$) and (ii$_{n-1}$) respectively, we can select $c\in A_{n-1}\setminus (B_{n-1}\cup\{0\})$. Clearly, 
\begin{equation}
\label{eq:B_n}
B_n=B_{n-1}\cup\{c\}
\end{equation}
 is a finite set, so (ii$_n$) holds. Note that (iv$_n$) holds as well.

Since $c\not=0$, there exists a neighbourhood $O$ of $0$ such that $c\not\in O$. Choose a neighbourhood $V$ of $0$ such that $V-V-V\subseteq O$.
Since $A$ is absolutely Cauchy summable,  we can use Proposition \ref{R:Cauchy:summable}~(i) to find a finite set $F\subseteq A$ such that 
\begin{equation}
\label{eq:tails}
\rhull{A\setminus F}\subseteq V.
\end{equation}
By enlarging $F$ if necessary, we shall assume, without loss of generality, that $B_n\subseteq F$.  Define 
\begin{equation}
\label{eq:A_n}
A_n=(A_{n-1}\setminus F)\cup B_n.
\end{equation}
Since $B_{n-1}\subseteq A_{n-1}$ by (iii$_{n-1}$) and $c\in A_{n-1}$,  from \eqref{eq:B_n} we get $B_n\subseteq A_{n-1}$.  From this and \eqref{eq:A_n} we conclude that (v$_n$) holds.
Since $F$ is finite and $A_{n-1}$ is infinite by (i$_{n-1}$), \eqref{eq:A_n} implies that $A_n$ is infinite as well; that is, (i$_n$) holds. The condition (iii$_n$) follows from (iii$_{n-1}$) and \eqref{eq:A_n}.

It remains only to check the condition (vi$_n$).  Consider first the case when $b\in B_{n-1}$. Since $A_n\subseteq A_{n-1}$ by (v$_n$) and $\pi^{A_{n-1}}_b$ is continuous by (vi$_{n-1}$), so is its restriction  $\pi^{A_n}_b=\pi^{A_{n-1}}_b\restriction{\rhull{A_n}}$ to $\rhull{A_n}$. Since $B_n\setminus B_{n-1}=\{c\}$, it remains only to verify that the linear functional  $\pi^{A_n}_c$ is continuous. By Lemma \ref{continuity:of:linear:functionals}, 
it suffices to find  a neighbourhood  $U$ of $0$ such that
\begin{equation}
\label{missing:c}
c\not\in\pi^{A_n}_{c}(U\cap \rhull{A_n}).
\end{equation}

Since $\pi^{A_{n-1}}_{B_{n-1}}$ is continuous by (vi$_{n-1}$), 
there exists  a neighbourhood $W$ of $0$ such that 
\begin{equation}
\label{eq:small:nghb}
\pi^{A_{n-1}}_{B_{n-1}}(W\cap\rhull{A_{n-1}})\subseteq V.
\end{equation}

We claim that $U=V\cap W$ is the desired neighbourhood. Indeed, assume that \eqref{missing:c} fails. Then $c=\pi^{A_n}_{c}(d)$ for some $d\in U\cap \rhull{A_n}$. It follows from \eqref{eq:B_n} and \eqref{eq:A_n} that 
$A_n=(A_{n-1}\setminus F)\cup B_{n-1}\cup\{c\}$. Since these three sets are pairwise disjoint and $A$ is linearly independent, there exist unique $a\in \rhull{A_{n-1}\setminus F}$ and $b\in \rhull{B_{n-1}}$ such that
$d=a+b+c$. In particular,
\begin{equation}
\label{eq:b:as:projection}
b=\pi^{A_{n-1}}_{B_{n-1}}(d).
\end{equation}
To get a contradiction, it is enough to show that each of the three elements $d$, $a$, $b$ belongs to $V$. Indeed, assuming that this has already been proved,
we would get $c=d-a-b\in V- V-V\subseteq O$, in contradiction with our choice of $O$.

First, $d\in U\subseteq V$. Second, $d\in U\subseteq W$ and 
$d\in \rhull{A_{n}}\subseteq \rhull{A_{n-1}}$ by (v$_n$), so that 
$d\in W\cap\rhull{A_{n-1}}$. Combining this with 
\eqref{eq:small:nghb} 
and 
\eqref{eq:b:as:projection},
we conclude that $b\in V$. Finally, $a\in \rhull{A_{n-1}\setminus F}\subseteq \rhull{A\setminus F}\subseteq V$ by (iii$_{n-1}$) and \eqref{eq:tails}.
This finishes the verification of the condition (vi$_n$).

The inductive step has been completed.

Since (iii$_m$) holds for every $m\in\N$,
\begin{equation}
\label{eq:def:B}
B=\bigcup_{m\in\N} B_m
\end{equation}
 is a subset of $A$.  Since (iv$_m$) holds for every $m\in\N$, $B$ is infinite. We claim that 
\begin{equation}
\label{eq:B:in:the:intersection}
B\subseteq \bigcap_{n\in \N} A_n.
\end{equation}
Indeed, by 
\eqref{eq:def:B},
in order to establish \eqref{eq:B:in:the:intersection}, it suffices to check that $B_m\subseteq A_n$ for all $m,n\in\N$. If $m\le n$, then  $B_m\subseteq B_n$, as (iv$_k$) holds for every $k\in\N$. Since $B_n\subseteq A_n$ by (iii$_n$), this yields the inclusion $B_m\subseteq A_n$ for $m\le n$. Suppose now that $n< m$. By (iii$_m$), $B_m\subseteq A_m$. Since (v$_k$) holds for every $k\in\N$, we have $A_m\subseteq A_n$. This yields the inclusion $B_m\subseteq A_n$ in case  $n< m$ as well.

By Lemma \ref{lemma:continuity:of:projections}, to finish the proof of our theorem it remains only to show that $\pi^{B}_b$ is continuous for every $b\in B$.  Let $b\in B$ be arbitrary. By \eqref{eq:def:B}, there exists $m\in\N$ such that $b\in B_m$. The map $\pi^{A_m}_b$ is continuous by (vi$_m$). Since $B\subseteq A_m$ by \eqref{eq:B:in:the:intersection}, its restriction $\pi^{A_m}_b\restriction_{B}=\pi^{B}_b$ to $\rhull{B}$ is also continuous.
\end{proof}

\begin{question}
If $A$ is an absolutely Cauchy summable subset of a topological vector space, is $\rhull{A}$ topologically isomorphic to $\R^{(I)}$ for some index set $I$?
\end{question}

\section{The linear Kalton map and its connection to the Kalton map}
\label{Sec:lkal}
Let $E$ be a vector space and let $A\subseteq E\setminus\{0\}$. Then there exists a unique linear map
\begin{equation}\label{eq:def:of:lkal}
\lkal{A}:\bigoplus_{a\in A}\rhull{a}\to E
\end{equation}
which extends each natural inclusion map $\rhull{a}\to E$ for $a\in A$. We call the map $\lkal{A}$ as in \eqref{eq:def:of:lkal} the {\em linear Kalton map associated with $A$}. Clearly, the set $A$ is linearly independent (or equivalently, $A$ is a Hamel basis of $\rhull{A}$) if and only if $\lkal{A}$ is  injective. The vector space 
 $$
 lS_A=\bigoplus_{a\in A}\rhull{a}
 $$
is topologically isomorphic to the topological vector space $\R^{(A)}$. Moreover,  $lS_A$ contains $S_A$ as a subgroup and $\lkal{A}\restriction_{S_A}=\kal{A}$.

When $E$ is a topological vector space, one can discuss topological properties of the linear Kalton map, as now both the domain and range of the map are topological spaces. Proposition \ref{kal:iso:lkal:cont} deals with the continuity of this map, while Proposition \ref{openess:of:linear:kalton:map} deals with its openness.

\begin{proposition}\label{kal:iso:lkal:cont}
Let $E$ be a topological vector space and let $A\subseteq E\setminus\{0\}$. Then 
the following statements are equivalent:
\begin{itemize}
\item[(i)] The linear Kalton map $\lkal{A}$ is continuous.
\item[(ii)] The Kalton map $\kal{A}$ is continuous.
\item[(iii)] The set $A$ is absolutely Cauchy summable.
\end{itemize}
\end{proposition}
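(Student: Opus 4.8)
The plan is to lean on Theorem~\ref{Proposition:Udine}, which already gives the equivalence of (ii) and (iii) when we regard $E$ merely as a topological group (taking $G=E$). It then remains only to link (i) to these, and I would do this by proving (i)$\to$(ii) together with (iii)$\to$(i), thereby closing the cycle (i)$\to$(ii)$\leftrightarrow$(iii)$\to$(i).

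For (i)$\to$(ii) I would first record that $S_A$ sits inside $lS_A$ as a topological subgroup. Since each $\hull{a}$ carries the subspace topology inherited from $\rhull{a}$, the product topology on $P_A=\prod_{a\in A}\hull{a}$ coincides with the subspace topology it inherits from $\prod_{a\in A}\rhull{a}$; as $S_A\subseteq lS_A\subseteq\prod_{a\in A}\rhull{a}$ all carry the corresponding subspace topologies, the inclusion $j\colon S_A\hookrightarrow lS_A$ is continuous (indeed a topological embedding). Because $\kal{A}=\lkal{A}\restriction_{S_A}=\lkal{A}\circ j$, continuity of $\lkal{A}$ immediately yields continuity of $\kal{A}$.

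The substantial direction is (iii)$\to$(i), and its engine is Proposition~\ref{R:Cauchy:summable}(i): absolute Cauchy summability of $A$ ensures that for every neighbourhood $V$ of $0$ in $E$ there is a finite set $F\subseteq A$ with $\rhull{A\setminus F}\subseteq V$. Given a neighbourhood $U$ of $0$ in $E$, I would choose $V$ with $V+V\subseteq U$, extract such an $F$, and then pick neighbourhoods $O_a$ of $0$ in $\rhull{a}$ for $a\in F$ with $\sum_{a\in F}O_a\subseteq V$ (possible since $F$ is finite). The candidate neighbourhood of $0$ in $lS_A$ is $N=\{h=\{h_a\}_{a\in A}\in lS_A:h_a\in O_a\text{ for all }a\in F\}$, which is basic for the topology $lS_A$ inherits from $\prod_{a\in A}\rhull{a}$ precisely because it constrains only the finitely many coordinates indexed by $F$.

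The key point — the one I expect to be the main conceptual obstacle, although the computation is short — is that $\lkal{A}(N)\subseteq U$ even though $N$ leaves all coordinates outside $F$ completely unconstrained. For $h\in N$ I would split $\lkal{A}(h)=\sum_{a\in F}h_a+\sum_{a\notin F}h_a$. The first sum lies in $\sum_{a\in F}O_a\subseteq V$. The decisive observation is that the second sum, being a finite sum of elements of the \emph{subspace} $\rhull{A\setminus F}$, again lies in $\rhull{A\setminus F}\subseteq V$; it is the linear (rather than merely group-theoretic) structure that traps the entire uncontrolled tail inside the small set $V$. Hence $\lkal{A}(h)\in V+V\subseteq U$, so $\lkal{A}$ is continuous, completing the argument.
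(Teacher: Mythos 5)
Your proposal is correct and follows essentially the same route as the paper's proof: (i)$\to$(ii) via the restriction $\lkal{A}\restriction_{S_A}=\kal{A}$, (ii)$\leftrightarrow$(iii) via Theorem~\ref{Proposition:Udine}, and (iii)$\to$(i) via Proposition~\ref{R:Cauchy:summable}~(i), splitting $V+V\subseteq U$ and trapping the unconstrained tail in the subspace $\rhull{A\setminus F}\subseteq V$. The only cosmetic differences are that you choose individual neighbourhoods $O_a$ where the paper takes a single $W$ with $W+W+\dots+W\subseteq V$, and that you make explicit the (correct) compatibility of the topology of $S_A$ with the subspace topology inherited from $lS_A$, which the paper leaves implicit.
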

\begin{proof} 
(i) implies (ii), as $\lkal{A}\restriction_{S_A}=\kal{A}$. Items (ii) and (iii) are equivalent by Theorem \ref{Proposition:Udine}. 

(iii)~$\to$~(i)
Let $U$ be a neighbourhood of $0$ in $E$. Choose a neighbourhood $V$ of $0$ in $E$ such that $V+V\subseteq U$. By (iii) and Proposition \ref{R:Cauchy:summable}~(i),  there exists 
a finite set $F\subseteq A$ such that $\rhull{A\setminus F}\subseteq V$.  If $F=\emptyset$, we let $W=V$. Otherwise
we fix a neighbourhood $W$ of zero of $E$ such that   $W+W+\dots+W\subseteq V$, where $|F|$-many $W$'s are taken in the sum. Then 
$$
\lkal{A}\left( \bigoplus_{a\in F} (\rhull{a} \cap W)\oplus\bigoplus_{a\in A\setminus F} \rhull{a}\right)
=
\sum_{a\in F} (\rhull{a} \cap W)+\rhull{A\setminus F}\subseteq V+V\subseteq U.
$$
This establishes the continuity of $\lkal{A}$.
\end{proof}

\begin{proposition}
\label{openess:of:linear:kalton:map}
For a linearly independent subset $A$ of a topological vector space $E$, the following conditions are equivalent:
\begin{itemize}
\item[(i)] the linear Kalton map $\lkal{A}: lS_A\to \rhull{A}$ is an open map onto its image $\rhull{A}$;
\item[(ii)] the linear functional $\pi^A_a: \rhull{A}\to\rhull{a}$ is continuous for every $a\in A$.
\end{itemize}
\end{proposition}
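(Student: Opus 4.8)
The plan is to reduce the statement to a purely formal fact about bijections combined with coordinatewise continuity. Since $A$ is linearly independent, the linear Kalton map $\lkal{A}$ is injective, hence a linear bijection of $lS_A$ onto its image $\rhull{A}$. For any bijection, being an open map onto its image is equivalent to the continuity of the inverse; thus it suffices to prove that the inverse $\lkal{A}^{-1}\colon \rhull{A}\to lS_A$ is continuous if and only if every $\pi^A_a$ is continuous.

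The bridge between the two conditions is the identity $p_a\circ\lkal{A}^{-1}=\pi^A_a$ for every $a\in A$, where $p_a\colon \prod_{b\in A}\rhull{b}\to\rhull{a}$ denotes the canonical coordinate projection (restricted to $lS_A$). Indeed, by linear independence each $x\in\rhull{A}$ has a unique representation $x=\sum_{a\in A}x_a$ with $x_a\in\rhull{a}$ and only finitely many $x_a$ nonzero; by definition $\lkal{A}^{-1}(x)$ is exactly the tuple $\{x_a\}_{a\in A}$, so its $a$-th coordinate is $x_a$, while $\pi^A_a(x)=x_a$ follows from $\ker\pi^A_a=\rhull{A\setminus\{a\}}$ together with the fact that $\pi^A_a$ restricts to the identity on $\rhull{a}$. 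I would establish this identity first, since everything else hinges on it.

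I would then invoke the fact that $lS_A$, just like $S_A$, carries the subspace topology inherited from the product $\prod_{a\in A}\rhull{a}$ (this is precisely the topology making $lS_A\cong\R^{(A)}$). For the product topology, and hence for its restriction to the subspace $lS_A$, continuity of a map into the space is equivalent to continuity of all its coordinates. Applying this to $\lkal{A}^{-1}$ and using $p_a\circ\lkal{A}^{-1}=\pi^A_a$ yields: $\lkal{A}^{-1}$ is continuous if and only if each $\pi^A_a$ is continuous. For the implication (i)$\to$(ii) I would additionally note that each $p_a$ is continuous on $lS_A$, being the restriction of a product projection, so that continuity of $\lkal{A}^{-1}$ forces continuity of each $\pi^A_a=p_a\circ\lkal{A}^{-1}$; conversely, continuity of all the $\pi^A_a$ gives continuity of $\lkal{A}^{-1}$ by the coordinatewise criterion. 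Combining this equivalence with the reduction of the first paragraph closes the proof.

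The argument is essentially formal, so there is no genuinely hard step; the one point demanding care is the reduction of continuity into $lS_A$ to coordinatewise continuity. This is valid only because $lS_A$ carries the subspace topology of the product $\prod_{a\in A}\rhull{a}$, and not the generally strictly finer locally convex direct sum topology, under which maps into $lS_A$ need not be continuous coordinatewise and the equivalence would break down. Keeping track of which topology $\R^{(A)}$ denotes is therefore the only real subtlety.
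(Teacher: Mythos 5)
Your proposal is correct and is essentially the paper's own proof in repackaged form: the paper establishes both implications by direct computations with the basic neighbourhoods $\bigoplus_{a\in F}(\rhull{a}\cap W_a)\oplus\bigoplus_{a\in A\setminus F}\rhull{a}$ of $lS_A$, built on the same commuting identity relating $\pi^A_a$, $\lkal{A}$ and the coordinate projections $p_a$, which you compress into ``open onto $\rhull{A}$ iff $\lkal{A}^{-1}$ is continuous'' together with the coordinatewise continuity criterion for maps into a subspace of a product. The one subtlety you flag is settled exactly as you expect: in this paper $lS_A$ carries the topology inherited from the Tychonoff product $\prod_{a\in A}\rhull{a}$ (in parallel with $S_A\subseteq P_A$ in the group setting), as confirmed by the form of the basic neighbourhoods above and by the identification $\overline{lS_B}\cong\R^{\N}$ used in the proof of Theorem \ref{another:theorem}.
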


\begin{proof}
For every $a\in A$, denote by $p_a$ the projection from $lS_A$ to the $a$th coordinate $\rhull{a}$ of the direct sum $lS_A$. Clearly,
\begin{equation}
\label{commutative:diagram}
\lkal{A}\restriction_{\rhull{a}}\circ p_a = \pi^A_a\circ \lkal{A}
\ 
\mbox{ for every }
\ 
a\in A.
\end{equation}
Recall that each $\lkal{A}\restriction_{\rhull{a}}$ is a topological isomorphism. It follows from the definition of $lS_A$ that each projection $p_a: lS_A\to \rhull{a}$ is continuous.

(i)$\to$(ii) Fix $a\in A$ and let $V$ be an open subset of $\rhull{a}$. It suffices to show that $(\pi^A_a)^{-1}(V)$ is an open subset of $\rhull{A}$.
Since $\lkal{A}\restriction_{\rhull{a}}^{-1}$ is a topological isomorphism, $\lkal{A}\restriction_{\rhull{a}}^{-1}(V)$ is open in $\rhull{a}$. Since $p_a$ is continuous, $W = 
p_a^{-1}(\lkal{A}\restriction_{\rhull{a}}^{-1}(V))$ is open in $lS_A$.
Applying (i), we conclude that $\lkal{A}(W)$ is open in 
$\lkal{A}(lS_A)=\rhull{A}$.
From \eqref{commutative:diagram}, we get
\begin{equation}
\label{eq:29}
(\pi^A_a)^{-1}(V)=\lkal{A}\left((\lkal{A}\restriction_{\rhull{a}}\circ p_a)^{-1}(V)\right)=\lkal{A}(W).
\end{equation}
Therefore, $(\pi^A_a)^{-1}(V)$ is an open subset of $\rhull{A}$.

(ii)$\to$(i) 
To establish (i), it suffices to find, for every basic open neighbourhood $O$ of $0$ in $lS_A$, an open neighbourhood $U$ of $0$ in $E$ such that $U\cap \rhull{A}\subseteq \lkal{A}(O)$. There exist a finite set $F\subseteq A$ and a family $\{W_a:a\in F\}$ of open neighbourhoods of $0$ in $E$ such that
\begin{equation}
\label{eq:O}
O=\bigoplus_{a\in F} (\rhull{a} \cap W_a)\oplus\bigoplus_{a\in A\setminus F} \rhull{a}
\end{equation}
For every $a\in F$ we can use (ii) to fix an open neighbourhood $V_a$ of $0$ in $E$ such that $\pi^A_a(V_a)\subseteq W_a$. Since $F$ is finite, $U=\bigcap_{a\in F} V_a$ is an open neighbourhood of $0$ in $E$. Since $\pi^A_a(U)\subseteq W_a$ for every $a\in F$, from \eqref{eq:O} and the definition of $\lkal{A}$ we conclude that
$U\cap \rhull{A}\subseteq \lkal{A}(O)$.
\end{proof}

\begin{example}
Let $A$ be the linearly independent, absolutely Cauchy summable subset of $\R^{(\N)}$ constructed in Example \ref{bad:projections}.
Then {\em the linear Kalton map $\lkal{A}$ is a continuous injection which is not an open map onto its image $\rhull{A}$\/}.
Indeed, the continuity of $\lkal{A}$ follows from Proposition \ref{kal:iso:lkal:cont}. Since the canonical projections $\pi^A_a$ are discontinuous, 
$\lkal{A}$ is not open by Proposition \ref{openess:of:linear:kalton:map}.
\end{example}

This example shows that in our next theorem one cannot take $B=S$.

\begin{theorem}
\label{abs:Cauchy:summable:contain:isomorphic:subsets}
Every infinite absolutely Cauchy summable subset 
$S$ of a topological vector space 
contains an infinite subset $B$ 
such that 
the linear Kalton map 
$\lkal{B}$ 
is a topologically isomorphic embedding. 
\end{theorem}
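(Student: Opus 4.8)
The plan is to combine the structural results already established for the linear Kalton map with the subset–extraction theorem, after first reducing to the linearly independent case. Recall from Proposition~\ref{kal:iso:lkal:cont} that $\lkal{B}$ is continuous precisely when $B$ is absolutely Cauchy summable, and from Proposition~\ref{openess:of:linear:kalton:map} that, for a linearly independent set $B$, the map $\lkal{B}$ is open onto its image $\rhull{B}$ precisely when every coordinate functional $\pi^B_b$ (for $b\in B$) is continuous. Since $\lkal{B}$ is injective exactly when $B$ is linearly independent, a topologically isomorphic embedding is obtained as soon as we produce an infinite $B\subseteq S$ that is simultaneously linearly independent, absolutely Cauchy summable, and has all its coordinate functionals $\pi^B_b$ continuous.

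The first step is to pass from $S$ to an infinite linearly independent, absolutely Cauchy summable subset. The point is that an infinite absolutely Cauchy summable set cannot span a finite-dimensional subspace: if $\rhull{S}$ were finite-dimensional, it would be topologically isomorphic to some $\R^n$, and by Remark~\ref{remark:on:abs:C:summability}~(ii) the set $S$ would be absolutely Cauchy summable inside $\hull{S}\subseteq\R^n$; choosing a bounded neighbourhood $U$ of $0$ and a finite $F\subseteq S$ with $\hull{S\setminus F}\subseteq U$ would force the subgroup $\hull{S\setminus F}$ of $\R^n$ to be trivial, since the only bounded subgroup of $\R^n$ is $\{0\}$. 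Then $S\setminus F\subseteq\{0\}$ and $S$ would be finite, a contradiction. Hence $\rhull{S}$ is infinite-dimensional, so from the spanning set $S$ one extracts (by standard linear algebra) an infinite linearly independent subset $S'$. Any subset of an absolutely Cauchy summable set is again absolutely Cauchy summable (if $\hull{S\setminus F}\subseteq U$ then $\hull{S'\setminus(F\cap S')}\subseteq U$), so $S'$ is an infinite linearly independent, absolutely Cauchy summable set; we may also assume $0\notin S'$.

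Next I would apply Theorem~\ref{subsets:with:continuous:projections} to $S'$: it yields an infinite subset $B\subseteq S'$ such that $\pi^B_C$ is continuous for every finite $C\subseteq B$, and in particular $\pi^B_b$ is continuous for each $b\in B$. This $B$ inherits linear independence and absolute Cauchy summability from $S'$. Finally I assemble the three ingredients: $\lkal{B}$ is injective because $B$ is linearly independent, continuous because $B$ is absolutely Cauchy summable (Proposition~\ref{kal:iso:lkal:cont}), and open onto $\rhull{B}$ because each $\pi^B_b$ is continuous (Proposition~\ref{openess:of:linear:kalton:map}). Therefore $\lkal{B}$ is a topologically isomorphic embedding, as required.

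The genuinely substantial work, namely the inductive construction producing a subset with continuous internal projections, is already carried out in Theorem~\ref{subsets:with:continuous:projections}, which we are free to invoke. Consequently the only new obstacle here is the reduction to the linearly independent case, and I expect its verification, that is, the infinite-dimensionality of $\rhull{S}$, to be the main point requiring care, while the concluding assembly is a routine application of the two propositions on the linear Kalton map.
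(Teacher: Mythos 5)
Your proposal is correct, and its skeleton coincides with the paper's proof: extract an infinite linearly independent, absolutely Cauchy summable subset of $S$, feed it to Theorem~\ref{subsets:with:continuous:projections}, and then assemble injectivity, continuity (Proposition~\ref{kal:iso:lkal:cont}) and openness onto the image (Proposition~\ref{openess:of:linear:kalton:map}). The one place where you genuinely diverge is the reduction step. The paper takes a maximal (linearly) independent subset $A\subseteq S$ via Zorn's Lemma, so that $\rhull{S}=\rhull{A}$, and rules out $|A|<\infty$ by noting that $S$ would then be an infinite absolutely Cauchy summable set in a finite-dimensional Euclidean space, contradicting the NSS characterization of Theorem~\ref{metric:NSS:iff:no:abs:C:summable} (which itself rests on an external result from \cite{DSS_arxiv}). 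You instead verify the finite-dimensional case by hand: in $\R^n$ a bounded neighbourhood $U$ of $0$ contains no non-trivial subgroup, so $\hull{S\setminus F}\subseteq U$ forces $S\setminus F\subseteq\{0\}$ and hence $S$ finite. This is more elementary and self-contained --- it removes the dependence on Theorem~\ref{metric:NSS:iff:no:abs:C:summable} entirely --- at the cost of silently invoking (as the paper also does) the standard fact that a finite-dimensional Hausdorff topological vector space carries the Euclidean topology; your in-line observations that subsets of absolutely Cauchy summable sets remain absolutely Cauchy summable, and that linear independence of $S'$ automatically excludes $0$, are both accurate, so the argument is complete as written.
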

\begin{proof} 
Let $A$ be a
maximal independent subset of $S$ which exists by Zorn's Lemma.
Then $\rhull{S}=\rhull{A}$ by the maximality of $S$. If $A$ were finite, then
$S$ would become an infinite Cauchy summable subset of the finite-dimensional Euclidean space $\rhull{A}$. Since finite-dimensional Euclidean spaces are NSS groups, this would contradict Theorem \ref{metric:NSS:iff:no:abs:C:summable}. Therefore, $A$ must be infinite.

As a subset of the absolutely Cauchy summable set $S$, $A$ itself is absolutely Cauchy summable. Since $A$ is also linearly independent, we can use 
Theorem \ref{subsets:with:continuous:projections} to choose an infinite subset $B$ of $A$ as in the conclusion of this theorem.

As a subset of the linearly independent set $A$, $B$ is also linearly independent. Therefore, the linear Kalton map $\lkal{B}$ is an injection.
Furthermore, $\lkal{B}$ is an open map onto its image $\rhull{B}$ by Proposition \ref{openess:of:linear:kalton:map}.
As a subset of the absolutely Cauchy summable set $S$, $B$ is absolutely Cauchy summable as well.
Therefore, the linear Kalton map $\lkal{B}$ is continuous by the implication (iii)~$\to$~(i) of Proposition \ref{kal:iso:lkal:cont}.
\end{proof}

\begin{corollary}
\label{continuous:contain:isomorphic:subsets}
Let $S$ be an infinite subset of a topological vector space such that the Kalton map $\kal{S}$ is continuous. Then $S$ contains a countably infinite subset $B$ such that the linear Kalton map $\lkal{B}$ is a topologically isomorphic embedding. 
\end{corollary}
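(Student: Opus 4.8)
The plan is to derive this as a short corollary of Theorem~\ref{abs:Cauchy:summable:contain:isomorphic:subsets}, the only extra work being to replace the ``infinite'' subset produced there by a countably infinite one. First I would note that, since $\kal{S}$ is defined, $0\notin S$, and the hypothesis that $\kal{S}$ is continuous together with the equivalence (ii)$\leftrightarrow$(iii) of Proposition~\ref{kal:iso:lkal:cont} shows that $S$ is absolutely Cauchy summable. As $S$ is infinite, Theorem~\ref{abs:Cauchy:summable:contain:isomorphic:subsets} then supplies an infinite subset $B_0\subseteq S$ for which the linear Kalton map $\lkal{B_0}$ is a topologically isomorphic embedding.

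It remains to manufacture a \emph{countably} infinite such set. I would simply fix an arbitrary countably infinite subset $B\subseteq B_0$ and verify that $\lkal{B}$ is again a topologically isomorphic embedding; this splits into three checks. (a) Since $\lkal{B_0}$ is injective, $B_0$ is linearly independent, hence so is its subset $B$, and therefore $\lkal{B}$ is injective. (b) Being a subset of the absolutely Cauchy summable set $S$, the set $B$ is absolutely Cauchy summable, so $\lkal{B}$ is continuous by the implication (iii)$\to$(i) of Proposition~\ref{kal:iso:lkal:cont}. (c) For openness onto the image, by Proposition~\ref{openess:of:linear:kalton:map} it suffices to show that $\pi^B_b$ is continuous for every $b\in B$.

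The hard part will be step (c), and even it is only a minor point. I would use that $\lkal{B_0}$, being a topologically isomorphic embedding, is in particular open onto $\rhull{B_0}$, so Proposition~\ref{openess:of:linear:kalton:map} (applied to the linearly independent set $B_0$) gives the continuity of $\pi^{B_0}_b$ for every $b\in B_0$. The key identity is $\pi^B_b=\pi^{B_0}_b\restriction_{\rhull{B}}$ for $b\in B$: the restriction is linear, equals the identity on $\rhull{b}$, and has kernel $\rhull{B}\cap\rhull{B_0\setminus\{b\}}$, which coincides with $\rhull{B\setminus\{b\}}$ by the linear independence of $B_0$ and uniqueness of coordinates. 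Thus $\pi^B_b$ is the restriction of the continuous map $\pi^{B_0}_b$ to the subspace $\rhull{B}$, so it is continuous; this yields (c) and completes the argument.

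I expect no serious difficulty here, since all the real content lives in Theorem~\ref{abs:Cauchy:summable:contain:isomorphic:subsets} and the underlying Theorem~\ref{subsets:with:continuous:projections}. In fact one could avoid passing to a subset altogether by observing that the set $B$ built in the proof of Theorem~\ref{subsets:with:continuous:projections} is $\bigcup_{m\in\N}B_m$ with each $B_m$ finite, and is therefore already countable; but restricting to a countable subset as above keeps the proof dependent only on the \emph{statements} of the cited results.
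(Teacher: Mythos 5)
Your proof is correct and follows essentially the same route as the paper: deduce that $S$ is absolutely Cauchy summable from the continuity of $\kal{S}$ and then invoke Theorem~\ref{abs:Cauchy:summable:contain:isomorphic:subsets} (note that the paper's own two-line proof cites Proposition~\ref{openess:of:linear:kalton:map} here, apparently a typo for Proposition~\ref{kal:iso:lkal:cont}, which is the reference you correctly use). Your extra step of passing to a countably infinite $B\subseteq B_0$ and re-verifying injectivity, continuity, and openness --- the last via the correct identity $\pi^B_b=\pi^{B_0}_b\restriction_{\rhull{B}}$ together with both implications of Proposition~\ref{openess:of:linear:kalton:map} --- is sound and actually repairs a point the paper glosses over, since Theorem~\ref{abs:Cauchy:summable:contain:isomorphic:subsets} as stated only produces an \emph{infinite} subset while the corollary asserts a \emph{countably infinite} one, the paper implicitly relying (as you observe) on the fact that the set built in the proof of Theorem~\ref{subsets:with:continuous:projections} is a countable union of finite sets.
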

\begin{proof}
By Proposition \ref{openess:of:linear:kalton:map}, $S$ is absolutely Cauchy summable. Now the conclusion follows from Theorem \ref{abs:Cauchy:summable:contain:isomorphic:subsets}.
\end{proof}

If $A$ is a subset of a topological vector space such that the Kalton map $\kal{A}$ is topologically isomorphic embedding, then the set $A$ is topologically independent (Proposition \ref{Proposition:Matsuyama}) and consequently, it is also linearly independent by Proposition \ref{prop:step:to:Lie}~(i). Thus the linear Kalton map $\lkal{A}$ is an isomorphism onto $\rhull{A}$. Moreover, $\lkal{A}$ is also continuous by Proposition \ref{kal:iso:lkal:cont}. However, we do not know whether it is an open map onto its image $\rhull{A}$.

 \begin{question}
If $A$ is a subset of a topological vector space such that $\kal{A}$ is a topologically isomorphic embedding, is then $\lkal{A}$ 
a topologically isomorphic embedding as well?
\end{question}

\section{Infinite direct sums and products in topological vector spaces}\label{Sec:final}

In this section we provide characterizations of topological vector spaces that contain either $\R^{(\N)}$ or $\R^\N$ as a subspace. As particular corollaries we obtain some classical results from \cite{BPR,Lip}.

\begin{theorem}\label{complete:TVS:not:TAP:iff:contains:R:to:N}
Let $E$ be a topological vector space. Then the following conditions are equivalent:
\begin{itemize}
\item[(i)] $E$  contains the topological vector space $\R^{(\N)}$ as its subspace;
\item[(ii)] $E$ contains  the topological group $\Z^{(\N)}$ as its subgroup;
\item[(iii)] $E$ has an infinite absolutely Cauchy summable set.
\end{itemize}
 Moreover, if $E$ is complete, then these three conditions are also equivalent to: 
\begin{itemize}
\item[(iv)]
$E$ contains the topological vector space $\R^\N$ as its subspace.
\end{itemize} 
 \end{theorem}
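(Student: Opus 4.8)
The plan is to prove the unconditional part by running the cycle (i)$\to$(ii)$\to$(iii)$\to$(i), and then, assuming $E$ complete, to attach (iv) by closing a second cycle (iv)$\to$(i) together with (iii)$\to$(iv), so that (iv) joins the equivalence class.

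The two easy arrows I would dispatch first. For (i)$\to$(ii), the point is that $\Z^{(\N)}$ is a topological subgroup of $\R^{(\N)}$: a basic product neighbourhood of $0$ in $\R^{(\N)}$ meets $\Z^{(\N)}$ in exactly a basic neighbourhood of the intrinsic topology of $\Z^{(\N)}$, since coordinates restricted to an interval of radius $<1$ are forced into $\{0\}$; thus a copy of $\R^{(\N)}$ inside $E$ already carries a copy of $\Z^{(\N)}$. For (ii)$\to$(iii) I would take the free generators $\{e_n:n\in\N\}$ of $\Z^{(\N)}$: they form an infinite absolutely Cauchy summable set in $\Z^{(\N)}$ by Lemma \ref{Cauchy:summable:in:direct:sum}, and since absolute Cauchy summability in $\hull{A}$ is equivalent to that in the ambient group by Remark \ref{remark:on:abs:C:summability}, the same set is absolutely Cauchy summable in $E$.

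The substantive unconditional step is (iii)$\to$(i). Given an infinite absolutely Cauchy summable set $S\subseteq E$, the Kalton map $\kal{S}$ is continuous by Theorem \ref{Proposition:Udine}, so Corollary \ref{continuous:contain:isomorphic:subsets} supplies a countably infinite $B\subseteq S$ for which the linear Kalton map $\lkal{B}\colon lS_B\to E$ is a topologically isomorphic embedding. As $lS_B=\bigoplus_{b\in B}\rhull{b}$ is topologically isomorphic to $\R^{(B)}\cong\R^{(\N)}$, this exhibits $\R^{(\N)}$ as a subspace of $E$, giving (i).

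It remains to treat (iv) under the hypothesis that $E$ is complete. The arrow (iv)$\to$(i) is immediate, since $\R^{(\N)}$ is a topological subspace of $\R^{\N}$, and (i)$\to$(iii) is already available through (i)$\to$(ii)$\to$(iii); the only genuinely new implication is (iii)$\to$(iv), which is where I expect the main obstacle. Proceeding as above, I would first produce a countably infinite $B$ with $\lkal{B}\colon lS_B\to E$ a topologically isomorphic embedding onto $\rhull{B}$. Being continuous, $\lkal{B}$ extends uniquely to a continuous linear map $\overline{lS_B}\to\overline{E}=E$, the final equality holding exactly because $E$ is complete. Here $\overline{lS_B}$ is the full product $\R^{B}$, as $lS_B$ is dense in $\prod_{b\in B}\rhull{b}\cong\R^{B}$, while the closure $\mathrm{cl}_E(\rhull{B})$ again lies in $E$ by completeness. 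The delicate point, which I would verify carefully, is the standard but essential fact that a topologically isomorphic embedding of topological vector spaces extends to a topologically isomorphic embedding of the completions, so that neither injectivity nor openness is lost in passing to the limit; once this is secured, the extended map carries $\R^{B}\cong\R^{\N}$ isomorphically onto the closed subspace $\mathrm{cl}_E(\rhull{B})$ of $E$, which yields (iv) and closes the loop (iii)$\to$(iv)$\to$(i)$\to$(iii).
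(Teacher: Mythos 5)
Your proposal is correct and follows essentially the same route as the paper: the arrows (i)$\to$(ii)$\to$(iii) use the same lemmas, and your (iii)$\to$(i) via Corollary \ref{continuous:contain:isomorphic:subsets} is just the paper's direct appeal to Theorem \ref{abs:Cauchy:summable:contain:isomorphic:subsets} in disguise. Your handling of (iv) -- extending the embedding $\lkal{B}$ to a topologically isomorphic embedding of $\overline{lS_B}\cong\R^{\N}$ into the complete space $E$ -- is the same uniqueness-of-completion argument that the paper compresses into one line (using that $\R^{\N}$ is the completion of $\R^{(\N)}$), and the extension fact you flag as delicate is exactly what the paper itself invokes in the proofs of Theorems \ref{prod:in:group} and \ref{another:theorem}.
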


\begin{proof}  The implication (i)~$\to$~(ii) is clear.

The implication (ii)~$\to$~(iii) follows from Lemma \ref{Cauchy:summable:in:direct:sum} and Remark \ref{remark:on:abs:C:summability}~(ii).

The implication (iii)~$\to$~(i) follows from Theorem 
\ref{abs:Cauchy:summable:contain:isomorphic:subsets}.

The equivalence  (i)~$\leftrightarrow$~(iv)  for a complete topological vector space $E$ follows from the uniqueness of the completion and the fact that $\R^\N$ is the completion of $\R^{(\N)}$.
\end{proof}

We say that  a topological vector space $E$ {\em has small lines\/} if,  for every neighbourhood $V$ of $0$ in $E$ there exists $x\in E\setminus\{0\}$ with 
$\rhull{x}\subseteq V$; that is, $V$ contains a ``line" (one-dimensional subspace). 

\begin{corollary}\cite[Theorem 3]{Lip}
\label{Lip:corollary}
For a metric vector space $E$, the following conditions are equivalent:
\begin{itemize}
\item[(i)] $E$ contains a subspace topologically isomorphic to $\R^{(\N)}$;
\item[(ii)] $E$ has small lines.
\end{itemize}
\end{corollary}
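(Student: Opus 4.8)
The plan is to derive Corollary~\ref{Lip:corollary} from Theorem~\ref{complete:TVS:not:TAP:iff:contains:R:to:N} by showing that, for a metric vector space, ``having small lines'' is equivalent to ``having an infinite absolutely Cauchy summable set'' (condition~(iii) of that theorem). The implication (i)$\to$(ii) is immediate: if $E$ contains a copy of $\R^{(\N)}$, then its neighbourhoods of $0$ already contain lines, because $\R^{(\N)}$ itself has small lines (the coordinate lines shrink into every basic neighbourhood), and a topological embedding transports this property into $E$. So the real work is in establishing (ii)$\to$(i), and by Theorem~\ref{complete:TVS:not:TAP:iff:contains:R:to:N} it suffices to produce an infinite absolutely Cauchy summable set in $E$ from the small-lines hypothesis.

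First I would fix a countable decreasing base $\{V_n:n\in\N\}$ of balanced neighbourhoods of $0$, which is available because $E$ is metrizable (and, by the standard folklore fact already invoked in Proposition~\ref{R:Cauchy:summable}, one may take each $V_n$ balanced). The idea is to build inductively a faithfully indexed sequence $\{x_n:n\in\N\}$ of non-zero vectors with $\rhull{x_n}\subseteq V_n$, using the small-lines hypothesis at each stage, and to control the generated subgroups so that the resulting set is absolutely Cauchy summable. The natural goal is to arrange, for each $n$, that the tail subgroup $\hull{\{x_k:k\ge n\}}$ sits inside a prescribed neighbourhood; since each $\rhull{x_k}$ is a full line lying in $V_k$, and the $V_k$ are balanced and decreasing, the subgroup generated by a tail of the $x_k$ can be steered into any given $V_m$ provided the lines are chosen deep enough in the filtration.

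The main obstacle will be the bookkeeping that guarantees absolute Cauchy summability, i.e.\ that for every neighbourhood $U$ of $0$ there is a finite $F$ with $\hull{X\setminus F}\subseteq U$. Choosing one line per neighbourhood is not by itself enough, because $\hull{X\setminus F}$ involves \emph{integer combinations} of infinitely many tail vectors, and sums of elements drawn from different balanced neighbourhoods need not stay small. The remedy is a telescoping choice: at step $n$ I would pick $V_n$ small enough that $V_n+V_n+\dots$ (finitely many at a time, controlled through a sequence $W_n$ with $W_{n+1}+W_{n+1}\subseteq W_n$) lands inside the target, so that $\rhull{x_k}\subseteq W_k$ forces $\hull{\{x_k:k\ge n\}}\subseteq W_{n-1}\subseteq U$ by a standard summation-of-balanced-neighbourhoods argument. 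Because $E$ is metric one can run this construction against the countable base and extract the single infinite set $X=\{x_n:n\in\N\}$; Definition~\ref{def:abs:summ:set} is then verified directly, and Theorem~\ref{complete:TVS:not:TAP:iff:contains:R:to:N} (iii)$\to$(i) yields the desired copy of $\R^{(\N)}$.

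Once the infinite absolutely Cauchy summable set $X$ is in hand, the conclusion is immediate: applying the equivalence (iii)$\to$(i) of Theorem~\ref{complete:TVS:not:TAP:iff:contains:R:to:N} gives a subspace of $E$ topologically isomorphic to $\R^{(\N)}$, which is exactly condition~(i) of the corollary. Thus the corollary reduces entirely to the line-selection construction, and I expect the only delicate point to be the balancing estimate that converts ``each tail line is small'' into ``each tail \emph{subgroup} is small''.
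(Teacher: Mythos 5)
Your proposal is correct, and its endgame coincides with the paper's, but the key middle step is genuinely different. The paper disposes of (ii)$\to$(i) in two lines: small lines implies $E$ is not NSS (a line inside a neighbourhood of $0$ is in particular a non-trivial subgroup), and then Theorem~\ref{metric:NSS:iff:no:abs:C:summable} --- whose proof rests on the cited result \cite[Theorem 5.5]{DSS_arxiv} on $f_\omega$-summable sequences in metric groups --- immediately yields an infinite absolutely Cauchy summable set, after which Theorem~\ref{complete:TVS:not:TAP:iff:contains:R:to:N} (iii)$\to$(i) finishes. You instead build the absolutely Cauchy summable set by hand, and your construction does work: fix balanced neighbourhoods $W_n$ refining a countable base at $0$ with $W_{n+1}+W_{n+1}\subseteq W_n$, use small lines to pick $x_n\neq 0$ with $\rhull{x_n}\subseteq W_n$, and then the standard telescoping estimate $W_m+W_{m+1}+\dots+W_{m+j}\subseteq W_{m-1}$ shows that every tail subgroup satisfies $\hull{\{x_k:k\geq m\}}\subseteq W_{m-1}$, precisely because each integer multiple $z_kx_k$ lies on the full line $\rhull{x_k}\subseteq W_k$ --- this is the ``balancing estimate'' you correctly identified as the delicate point, and it is exactly what makes the small-lines hypothesis stronger than mere failure of NSS (a general non-trivial subgroup in $W_k$ would in fact also suffice here, since $\hull{x_k}\subseteq W_k$ is all one uses). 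What your route buys is self-containedness: it avoids the black-box NSS characterization and its external citation, essentially reproving the relevant direction of Theorem~\ref{metric:NSS:iff:no:abs:C:summable} in the special case at hand. What the paper's route buys is brevity and economy: Theorem~\ref{metric:NSS:iff:no:abs:C:summable} is machinery it needs anyway (e.g.\ for Corollary~\ref{sum:of:Z:inside} and Theorem~\ref{abs:Cauchy:summable:contain:isomorphic:subsets}).

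One small point to patch in your write-up: you must check that the \emph{set} $X=\{x_n:n\in\N\}$ is infinite, since Definition~\ref{def:abs:summ:set} and Theorem~\ref{complete:TVS:not:TAP:iff:contains:R:to:N}~(iii) concern infinite sets and the chosen vectors could a priori repeat. Taking the $W_n$ inside a countable base of the (Hausdorff) space $E$ gives $\bigcap_{n\in\N}W_n=\{0\}$, so each fixed non-zero vector can equal $x_n$ for only finitely many $n$; hence $X$ is infinite (or pass to a faithfully indexed subsequence). With that observation added, your argument is complete.
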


\begin{proof}
The implication (i)~$\to$~(ii) follows from the fact that $\R^{(\N)}$ has small lines. To establish the implication (ii)~$\to$~(i), assume that $E$ has small lines. Then
$E$ is not NSS. By Theorem \ref{metric:NSS:iff:no:abs:C:summable},  $E$ contains an infinite absolutely Cauchy summable set. To get (i) it remains to apply Theorem \ref{complete:TVS:not:TAP:iff:contains:R:to:N}.
\end{proof}

Combining Theorem \ref{complete:TVS:not:TAP:iff:contains:R:to:N} with Corollary \ref{Lip:corollary}, one obtains the following classical result of Bessaga, Pelczynski and Rolewicz:

\begin{corollary}\cite[Theorem 9]{BPR}
A complete metric vector space contains a subspace topologically isomorphic to $\R^{\N}$ if and only if it  has small lines.
\end{corollary}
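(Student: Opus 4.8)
The plan is to obtain this statement by simply chaining together the two results that immediately precede it, exploiting the fact that a complete metric vector space is in particular a (complete) topological vector space, so that both Corollary~\ref{Lip:corollary} and the ``moreover'' clause of Theorem~\ref{complete:TVS:not:TAP:iff:contains:R:to:N} apply to it.

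First I would fix a complete metric vector space $E$. Applying Corollary~\ref{Lip:corollary}, which holds for \emph{every} metric vector space, I obtain that $E$ has small lines if and only if $E$ contains a subspace topologically isomorphic to $\R^{(\N)}$; this is precisely the equivalence (i)$\leftrightarrow$(ii) of that corollary, and it uses only metrizability, not completeness. Next, since $E$ is complete, I would invoke the final assertion of Theorem~\ref{complete:TVS:not:TAP:iff:contains:R:to:N}, namely the equivalence (i)$\leftrightarrow$(iv), to conclude that $E$ contains $\R^{(\N)}$ as a subspace if and only if it contains $\R^{\N}$ as a subspace. Composing these two equivalences then yields that $E$ has small lines if and only if it contains a subspace topologically isomorphic to $\R^{\N}$, which is exactly the assertion to be proved.

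There is no genuine obstacle in the argument itself: it is a one-line composition of two earlier equivalences. The only point demanding care — and the only place where the completeness hypothesis is actually used — is the passage from $\R^{(\N)}$ to $\R^{\N}$. This rests on the already established fact, recorded in the proof of Theorem~\ref{complete:TVS:not:TAP:iff:contains:R:to:N}, that $\R^{\N}$ is the completion of $\R^{(\N)}$, together with the uniqueness of completions of topological groups. Without completeness of $E$ one recovers only the weaker statement of Corollary~\ref{Lip:corollary} about $\R^{(\N)}$, so I would emphasize that completeness enters solely through clause (iv) of the theorem.
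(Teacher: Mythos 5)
Your proposal is correct and coincides with the paper's own argument: the paper obtains this corollary exactly by combining Corollary~\ref{Lip:corollary} with Theorem~\ref{complete:TVS:not:TAP:iff:contains:R:to:N}, where completeness is used only for the equivalence (i)$\leftrightarrow$(iv) passing from $\R^{(\N)}$ to $\R^{\N}$. Your additional remark pinpointing where completeness enters (via the uniqueness of completions and $\R^{\N}$ being the completion of $\R^{(\N)}$) matches the justification given in the proof of that theorem.
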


Recall that in the multiplier convergence theory one typically takes a fixed set $\mathscr{F}\subseteq \R^\N$ of ``multipliers'' and calls a
series $\sum_{n=0}^\infty a_n$ in a topological vector space {\em $\mathscr{F}$ multiplier convergent\/} provided that the series $\sum_{n=0}^\infty f(n)a_n$ converges
for every $f\in\mathscr{F}$. 
By varying the set $\mathscr{F}$ of multipliers one can obtain a fine description of the level of convergence
of a given series, leading to a rich theory; see \cite{Swartz}. The toughest convergence condition on a series is obviously imposed by taking 
$\mathscr{F}$ to be the whole $\R^\N$. For brevity, we shall say that a sequence $\{a_n:n\in\N\}$ of elements of a topological vector space $E$ is {\em $\R^\N$-convergent\/}
 if the series $\sum_{n=0}^\infty a_n$ is $\R^\N$ multiplier convergent; that is, if the series $\sum_{n=1}^\infty r_n a_n$ converges to some element of $E$ for every real sequence  $\{r_n:n\in\N\}$.
For such a sequence $A= \{a_n:n\in\N\}$ the map $\N \to A$, defined by $n\mapsto a_n$ may fail to be finitely-many to one only when $a_n=0$ for all but finitely many members of the sequence. In such a case we say that $A= \{a_n:n\in\N\}$ is {\em trivial\/}, otherwise (if infinitely many $a_n\ne 0$) we call it {\em non-trivial\/}. 

Bessaga, Pelczynski and Rolewicz used the notion of an $\R^\N$-convergent sequence (without giving it a name) to obtain a characterization of complete metric spaces containing a subspace isomorphic to $\R^\N$. Indeed, they proved that a complete metric vector space contains a subspace isomorphic to $\R^\N$ if and only if it contains 
a non-trivial $\R^\N$-convergent sequence
\cite[Corollary]{BPR}.
Our last theorem shows that both ``metric'' and ``complete'' are superfluous in this result.

\begin{theorem}\label{another:theorem}
A topological vector space contains a subspace isomorphic to $\R^\N$ if and only if it contains 
a non-trivial $\R^\N$-convergent sequence.
\end{theorem}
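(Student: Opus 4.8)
The forward implication is the easy direction, so I would dispose of it first. Given a topologically isomorphic embedding $\phi\colon\R^\N\to E$ onto a subspace, I set $a_n=\phi(e_n)$, where $e_n\in\R^\N$ is the $n$th standard unit vector. Each $a_n\neq 0$ by injectivity, so the resulting sequence is non-trivial; and for every real sequence $\{r_n:n\in\N\}$ the partial sums $\sum_{n\le N}r_ne_n$ converge coordinatewise to $(r_n)_n\in\R^\N$, so applying the continuous map $\phi$ gives $\sum_{n\le N}r_na_n\to\phi((r_n)_n)\in E$. Thus $\{a_n:n\in\N\}$ is a non-trivial $\R^\N$-convergent sequence.

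For the substantial converse, I would begin with a normalisation. A non-trivial $\R^\N$-convergent sequence contains infinitely many non-zero terms, and no non-zero value can occur infinitely often (otherwise the multiplier equal to $1$ on that index set yields a divergent series), so the set of non-zero values is infinite. Since every subsequence of an $\R^\N$-convergent sequence is again $\R^\N$-convergent — one simply pads the multipliers with zeros — I may thin the sequence to a faithfully indexed $\R^\N$-convergent sequence $A=\{a_n:n\in\N\}$ of pairwise distinct non-zero vectors. Restricting to integer multipliers shows $A$ is $f_\omega$-summable, hence absolutely summable by Remark \ref{f:omgea:is:abs:sum}, hence absolutely Cauchy summable by Proposition \ref{as:is:acs}. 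As $A$ is infinite and absolutely Cauchy summable, Theorem \ref{abs:Cauchy:summable:contain:isomorphic:subsets} produces an infinite (necessarily countable) subset $B\subseteq A$ for which the linear Kalton map $\lkal{B}\colon\R^{(B)}\to E$ is a topologically isomorphic embedding onto $\rhull{B}\cong\R^{(\N)}$.

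The next step is to pass to completions. Since $B$ is absolutely Cauchy summable, $\lkal{B}$ is continuous by Proposition \ref{kal:iso:lkal:cont}, so it extends to a continuous homomorphism $\skal{B}\colon\R^B\to\overline{E}$, using that $\R^B$ is the completion of $\R^{(B)}$ for countable $B$. By uniqueness of completions the extension of the embedding $\lkal{B}$ remains a topologically isomorphic embedding, so $\skal{B}$ embeds $\R^B\cong\R^\N$ topologically into $\overline{E}$; everything then hinges on the inclusion $\skal{B}(\R^B)\subseteq E$. Fixing $(r_b)_{b\in B}\in\R^B$, continuity identifies $\skal{B}((r_b)_b)$ with the limit in $\overline{E}$ of the net $\bigl\{\sum_{b\in F}r_bb:F\subseteq B\text{ finite}\bigr\}$.

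The hard part — and the place where $\R^\N$-convergence does more than absolute Cauchy summability, letting me drop completeness of $E$ — is to locate this limit inside $E$. Enumerating $B=\{b_k:k\in\N\}$ (again $\R^\N$-convergent as a subsequence of $A$), I would first show the summation net is Cauchy by contraposition, in the spirit of Proposition \ref{exchange of quantifiers generalized}: were it not Cauchy, there would be a neighbourhood $U$ of $0$ and, chosen inductively, pairwise disjoint finite index blocks $G_1<G_2<\cdots$ with $\sum_{k\in G_j}r_kb_k\notin U$ for every $j$; then the real multiplier sequence $\{r_k'\}$ equal to $r_k$ on $\bigcup_jG_j$ and $0$ elsewhere makes the partial sums of $\sum_kr_k'b_k$ jump outside $U$ infinitely often, so that series diverges, contradicting $\R^\N$-convergence. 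Hence the net is Cauchy and converges in $\overline{E}$ to $\skal{B}((r_b)_b)$. Because the initial segments $\{0,\dots,N\}$ are cofinal among the finite subsets of $\N$, the sequence of ordinary partial sums $\sum_{k\le N}r_kb_k$ is a cofinal subnet with the same limit; but $\R^\N$-convergence already makes this sequence converge to an element of $E$, so by Hausdorffness $\skal{B}((r_b)_b)$ equals that element and lies in $E$. This gives $\skal{B}(\R^B)\subseteq E$, so $\skal{B}\colon\R^B\to E$ is a topologically isomorphic embedding and $E$ contains a subspace isomorphic to $\R^\N$, as required.
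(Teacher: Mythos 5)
Your proof is correct and follows essentially the same route as the paper: reduce the non-trivial $\R^\N$-convergent sequence to an infinite absolutely summable (hence absolutely Cauchy summable) set, extract $B$ via Theorem \ref{abs:Cauchy:summable:contain:isomorphic:subsets}, extend the embedding $\lkal{B}$ to the completions, and use $\R^\N$-convergence of the subsequence together with uniqueness of limits in the Hausdorff space $\overline{E}$ to conclude $\slkal{B}(\R^B)\subseteq E$. The only remark is that your Cauchy-by-contraposition argument for the summation net is redundant --- as you yourself note one sentence earlier, continuity of the extension already makes that net converge in $\overline{E}$, so the cofinal-subnet and Hausdorffness step alone suffices.
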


\begin{proof}
The ``only if'' part is obvious. To show the  ``if'' part, assume that $A=\{a_n:n\in\N\}$ is a non-trivial
$\R^\N$-convergent sequence in a topological vector space $E$. Then the {\em set\/} $A$ must be infinite and absolutely summable. 
Consequently, the set $A$ is absolutely Cauchy summable by Proposition \ref{as:is:acs}. 
By Theorem \ref{abs:Cauchy:summable:contain:isomorphic:subsets}, there is an infinite faithfully indexed subset $B=\{b_n:n\in\N\}$ of $A$ such that the linear Kalton map $\lkal{B}$ is a topologically isomorphic embedding. By the uniqueness of completions of topological vector spaces, its continuous extension $\slkal{B}: \overline{lS_B} \to \overline{E}$ to the completions $\overline{lS_B}$ and $\overline{E}$ of $lS_B$ and $E$, respectively, is still a topological isomorphism. Clearly, $ \overline{lS_B}\cong \R^\N$.  Therefore, it suffices to show that $\slkal{B}(\overline{lS_B})\subseteq E$.

Since the faithfully indexed sequence $\{b_n:n\in\N\}$ is a subsequence of the $\R^\N$-convergent sequence $\{a_n:n\in\N\}$, the former sequence is $\R^\N$-convergent as well. 
Let $x_n=(\slkal{B})^{-1}(b_n)$ for every $n\in\N$. Since every element $v \in \overline{lS_B}$ can be written in the form $v  = \sum_{n=1}^\infty r_n  x_n$, we obtain
$$
\slkal{B}(v) = \slkal{B}\left(\sum_{n=1}^\infty r_nx_n\right)
=
\sum_{n=1}^\infty r_n\slkal{B}(x_n)
=
\sum_{n=1}^\infty r_nb_n
 \in E,
$$
which implies $\slkal{B}(\overline{lS_B})\subseteq E$.
\end{proof}

\begin{remark}\label{answer:to:a:question:of:M:Husek} Topological vector spaces without $\R^\N$-convergent sequences  were studied in \cite{Husek2} under the name ``spaces without free sums''.  In Proposition 4 of \cite{Husek2} 
Hu\v{s}ek proved that the following statements are equivalent for a topological vector space $E$:
\begin{enumerate}
 \item There exists a continuous linear mapping $f:\R^\N\to E$ such that $f(\R^\N)$ has infinite dimension.
 \item There exists a continuous linear mapping $f:\R^\N\to E$ such that infinitely many $f(e_i)$ are non-zero (here $e_i(j)=\delta_{ij}$, where $\delta_{ij}$ is the Kronecker's delta).
 \item $E$ contains an $\R^\N$-convergent sequence.
\end{enumerate} 
In a remark after \cite[Proposition 4]{Husek2} the author says that he is not aware  whether the following can be added to the above list of equivalent properties. 
\begin{itemize}
\item[(4)] There exists a one-to-one continuous linear mapping from $\R^\N$ to $E$.  
\end{itemize} 

Theorem \ref{another:theorem} shows that it is possible, thereby answering the question of Hu\v{s}ek. In fact, the same theorem shows that  one can add to the list of equivalent properties even the following stronger property:
\begin{itemize}
 \item[(5)] There exists a topologically isomorphic embedding $\R^\N \hookrightarrow E$.  
\end{itemize} 
\end{remark}

To the best of our knowledge $\R^\N$-convergent sequences were also studied in \cite{Pfister}.

\end{document}